\definecolor{maroon(html/css)}{rgb}{0.5, 0.0, 0.0}\definecolor{cobalt}{rgb}{0.0, 0.28, 0.67}
\let\emptyset\varnothing
\renewcommand{\epsilon}{\varepsilon}
\newcommand{\R}{{\mathbb R}}
\newcommand{\C}{{\mathbb C}}
\newcommand{\dd}{\operatorname{d}}
\def \C{\mathbb{C}}
\def \R{\mathbb{R}}
\def \P{\mathbb{P}}
\def \A{\mathcal{A}}
\def \/{\big/}
\def \GL{\textup{GL}}
\def \det{{\rm{det}}}
\def \dim{\textup{dim}}
\def \ker{\textup{ker}}
\def \Gr{\textup{Gr}}
\def \Gr{{\rm{Gr}}}
\def \Mat{{\rm{Mat}}}
\declaretheorem[numberwithin=section]{theorem}
\declaretheorem[sibling=theorem]{definition}
\declaretheorem[sibling=theorem, style=definition]{property}
\declaretheorem[sibling=theorem, style=definition]{example}
\declaretheorem[sibling=theorem, style=definition]{remark}
\definecolor{caribbeangreen}{rgb}{0.0, 0.8, 0.6}
\definecolor{capri}{rgb}{0.0, 0.75, 1.0}
\theoremstyle{plain}
\newtheorem{Theorem}{Theorem}[section]
\newtheorem{Lemma}[Theorem]{Lemma}
\newtheorem{Question}[Theorem]{Question}
\newtheorem{Proposition}[Theorem]{Proposition}
\newtheorem{Conjecture}[Theorem]{Conjecture}
\newtheorem{Corollary}[Theorem]{Corollary}
\newtheorem*{NoNumberTheorem}{Theorem}
\theoremstyle{definition}
\newtheorem*{Theorem*}{Theorem}
\newtheorem*{Corollary*}{Corollary}
\renewcommand{\paragraph}[1]{\smallskip\noindent\textbf{#1}}
\begin{document}

\title{
Triangulations and Canonical Forms of Amplituhedra:\\ a fiber-based approach beyond polytopes 
}
\author{Fatemeh Mohammadi, Leonid Monin, and Matteo Parisi}
\maketitle

\noindent{\bf Abstract.} 
Any totally positive $(k+m)\times n$ matrix induces a map $\pi_+$ from the positive Grassmannian ${\rm Gr}_+(k,n)$ to the Grassmannian ${\rm Gr}(k,k+m)$, whose image is the \emph{amplituhedron} $\mathcal{A}_{n,k,m}$ and is endowed with a top-degree form called the \emph{canonical form} ${\bf\Omega}(\mathcal{A}_{n,k,m})$.
This construction was introduced by 
Arkani-Hamed and Trnka in \cite{arkani2014amplituhedron}, where they showed that ${\bf\Omega}(\mathcal{A}_{n,k,4})$ encodes scattering amplitudes in $\mathcal{N}=4$ super Yang-Mills theory. One way to compute ${\bf\Omega}(\mathcal{A}_{n,k,m})$ is to subdivide 
$\mathcal{A}_{n,k,m}$ into so-called generalized triangles and sum over their associated canonical forms.
Hence, the physical computation of scattering amplitudes is reduced to finding the triangulations of $\mathcal{A}_{n,k,4}$.
However, while triangulations of polytopes are fully captured by their secondary and fiber polytopes \cite{gelfand2008discriminants, billera1992fiber}, the study of triangulations of objects beyond
polytopes is still underdeveloped. 

In this work, we initiate the geometric study of subdivisions of $\mathcal{A}_{n,k,m}$ 
in order to establish the notion of {\em secondary amplituhedron}.
For this purpose, we first extend the projection $\pi_+$ to a rational map 
$\pi:\Gr(k,n)\dashrightarrow\Gr(k,k+m)$ and provide a concrete birational parametrization of the fibers of $\pi$. We then use this to explicitly describe a rational top-degree form $\omega_{n,k,m}$ (with simple poles) on the fibers 
and compute ${\bf\Omega}(\mathcal{A}_{n,k,m})$
as a summation of certain residues of $\omega_{n,k,m}$. 
As main application of our approach, we develop a well-structured notion of secondary amplituhedra for 
conjugate to polytopes, i.e.~when $n-k-1=m$ (even). We show that, in this case, each fiber of $\pi$ is parametrized by a projective space and its volume form $\omega_{n,k,m}$ has only poles on a hyperplane arrangement. Using such linear structures,
for amplituhedra which are cyclic polytopes or conjugate to polytopes, we 
show that the 
\emph{Jeffrey-Kirwan residue} computes ${\bf\Omega}(\mathcal{A}_{n,k,m})$ from the fiber volume form $\omega_{n,k,m}$. In particular, we give conceptual proofs of the statements of \cite{ferro2018amplituhedron}.
Finally, we propose a more general framework of \emph{fiber positive geometries} and analyze new families of examples such as fiber polytopes and Grassmann polytopes.

{
\hypersetup{linkcolor=black}
\setcounter{tocdepth}{1}
\setlength\cftbeforesecskip{1.7pt}
{\tableofcontents}}

\section{Introduction}
The combinatorics of {\em subdivisions} of polytopes is very rich and has been extensively studied in polyhedral geometry (see, e.g., \cite{de2010triangulations} and references therein). More importantly, by the seminal works of Gelfand, Kapranov and Zelevinsky \cite{zbMATH00005177,gelfand2008discriminants}, given any polytope $P$ there exists a so-called secondary polytope whose face lattice is isomorphic to the poset of regular subdivisions of $P$.
In a subsequent work \cite{billera1992fiber}, Billera and Sturmfels introduced the notion of \emph{fiber polytopes} which contain secondary polytopes as examples. More precisely, given two polytopes $Q$ and $P$ with a projection $\pi:Q \to P$ whose fibers are all polytopes, the fiber polytope $\Sigma(Q,P)$ is defined as the {\em Minkoswki integral} of the fibers of $\pi$ over $P$.
If one considers a polytope $P$ with $n$ vertices as the image of the standard projection from the simplex $\Delta_{n-1}$, then the fiber polytope $\Sigma(\Delta_{n-1},P)$ coincides with the secondary polytope of $P$. 
Therefore, the fibers of such projections relate to the subdivisions of $P$ in an elegant way. 
Hence, this approach provides a representation of the secondary polytope of $P$ as Minkowski integral over the fibers of $\pi$, or as Minkowski sum of these fibers at some discrete set of points in $P$. A primary family of polytopes whose subdivisions are well-understood is {\em cyclic polytopes} (see, e.g.~\cite{ edelman1996higher, rambau1997triangulations, athanasiadis2000fiber, ziegler}). The cyclic polytope $C(n,d)$ is the convex hull of any $n$ distinct points on the moment curve $\{(t,t^2,\ldots,t^d): t\in \mathbb{R}\}$ in $\mathbb{R}^d$. 

\smallskip

The study of subdivisions of objects beyond
polytopes is quite underdeveloped. One particular family of interest is amplituhedra. The {\em amplituhedron} is a geometric object defined by Arkani-Hamed and Trnka \cite{arkani2014amplituhedron} whose subdivisions have profound use-cases in physics, especially in computing scattering amplitudes of particles. Cyclic polytopes are 
examples of amplituhedra. However, a general amplituhedron is not necessarily a polytope.
Another special case is isomorphic to the {\em positive Grassmannian} $\Gr_+(k,n)$,
the subset of the real Grassmannian $\Gr(k,\mathbb{R}^n)$
where all Pl\"ucker coordinates are non-negative. $\Gr_+(k,n)$ is not a linear object, i.e.~it is not carved out by linear polynomial inequalities as polytopes. Based on the seminal work on {\em positivity} by Lusztig \cite{lusztig}, as well as Fomin and Zelevinsky \cite{fomin1999double}, Postnikov \cite{postnikov2006total} studied the cell decompositions of positive Grassmannians and showed that even though they are not linear objects, they have remarkable geometric structures. In particular, the restriction of the matroid stratification of $\Gr(k,\mathbb{R}^n)$ given in \cite{gelfand1987combinatorial}, to the positive Grassmannian provides a decomposition of $\Gr_+(k,n)$ into so-called \emph{positroids} cells \cite{postnikov2006total,PSW,rietsch}. 
Since Postnikov's work \cite{postnikov2006total}, positive Grassmannians have been extensively studied in mathematics and have also appeared in various other fields including in the amplituhedron theory (see, e.g.~\cite{Arkani-Hamed:2016byb} and references therein). 

\smallskip

To define the amplituhedron $\A_{n,k,m}$ more precisely, consider a map $\pi_+:\Gr_+(k, n)\rightarrow\Gr(k,k+m)$ induced by any totally positive $(k+m)\times n$ matrix $Z$. Then the image of $\pi_+$ is $\A_{n,k,m}$ which is a semialgebraic subset of $\Gr(k,k+m)$ of full dimension. 
The amplituhedron has very complicated geometric and combinatorial structures. 
Despite the extensive research on this topic, 
the structure of $\A_{n,k,m}$ and its subdivisions are only known for special cases, e.g.~when it is isomorphic to the positive Grassmannian,
or cyclic polytopes \cite{sturmfels1988totally}, or the complex of bounded regions of a cyclic hyperplane arrangement \cite{karp2019amplituhedron}, or the $m=2$ case which is related to the positive tropical Grassmannian \cite{lukowski2020positive}. 

\smallskip

Cyclic polytopes, positive Grassmannians, and amplituhedra are all part of the more general paradigm of \emph{positive geometries} \cite{positivegeom}.
In recent years, positive geometries are frequently arising as the underlying mathematical structures for quantum mechanical observables of many theories in particle physics and cosmology. Even though positive geometries have a priori no reference to any physics notions, the physical principles and properties of physical observables can be seen as emergent from the mathematical properties of these objects. Therefore, they play a fundamental role in understanding the physics they encode.
More precisely, a positive geometry is a pair $(X_\C,X_+)$ of a complex algebraic variety $X_\C$ defined over $\R$ and a semialgebraic subset $X_+\subset X_\R$ such that there exists a unique meromorphic form $\mathbf{\Omega}(X)$ on $X_\C$, called \emph{canonical form}, which has simple poles or {\em logarithmic singularities} along the (complexified) boundaries of $X_+$ and it is regular everywhere else. 
A canonical form of a positive geometry explicitly encodes its associated physical observables. 
Given a positive geometry $(X_\C,X_+)$ endowed with a canonical form $\mathbf{\Omega}(X)$, a natural way to compute $\mathbf{\Omega}(X)$ is by {\em triangulating} the geometry and summing over the canonical forms $\mathbf{\Omega}(X_a)$ of the cells $X_a$ in a triangulation $\{X_a\}$ of $X_+$. Since the forms $\mathbf{\Omega}(X_a)$ are usually straightforward to compute, the problem is reduced to finding the triangulations of positive geometries. 
Moreover, different triangulations correspond to different representations of $\mathbf{\Omega}(X)$ and lead to different expressions of the associated physical observable. Hence, it is desirable to develop a theory of {\em secondary positive geometry} to encode the 
interrelations of different representations of physical observables. 

\smallskip

In particular, understanding the subdivisions of the amplituhedron is very important. Since $\mathcal{A}_{n,k,m}$ is the image of $\Gr_+(k,n)$ which has a decomposition into positroid cells \cite{postnikov2006total}, the images of positroid cells are good candidates for decomposing $\mathcal{A}_{n,k,m}$.
Hence, the main question in this context is to characterize collections of positroid cells whose full-dimensional images give a \emph{dissection} of the amplituhedron, i.e.~they are pairwise disjoint, and together they cover a dense subset of the amplituhedron. In case the map $\pi_+$ is injective on such cells, the dissection is called \emph{triangulation} and its elements \emph{generalized triangles}. Importantly, any such collection of positroid cells
gives rise to an expression for the canonical form of the amplituhedron. 

\smallskip

The combinatorics and geometry of positroidal triangulations (and dissections) of the amplituhedron $\mathcal{A}_{n,k,m}$ is incredibly rich \cite{arkani2014amplituhedron,ferro2016towards,positivegeom,galashin2018parity,ferro2018amplituhedron} and still unexplored in its full generality. 
For $m=1$ a particular type of triangulations of the amplituhedron is provided in \cite{karp2019amplituhedron}. More recently, in \cite{bao2019m} the authors constructed many `BCFW-like' triangulations for the $m=2$ amplituhedron. It is shown in \cite{lukowski2020positive} that these triangulations (and, more in general, `BCFW-like' dissections) are in bijection with the triangulations (dissections) of the \emph{hypersimplex}. Moreover, it is conjectured that this bijection extends to any (positroidal) triangulation and dissection. In particular, there are some nice dissections coming from regular (positroidal) subdivisions of the hypersimplex, which are fully governed by the \emph{positive tropical Grassmannian} \cite{troppos}. 
We recall that the $k=1$ amplituhedron $\mathcal{A}_{n,1,m}$ is a  cyclic polytope in $\mathbb{P}^m$, hence its subdivisions are well-understood in terms of their secondary polytopes.

\medskip\noindent{\bf Our contributions.} 
One of the main questions in the amplituhedron theory is to explicitly describe the canonical form of the amplituhedron ${\bf\Omega}(\A_{n,k,m})$.
To compute ${\bf\Omega}(\A_{n,k,m})$ we study the fibers of the map $\pi_+: \Gr_+(k,n) \rightarrow \Gr(k,k+m)$ whose image is the amplituhedron $\A_{n,k,m}$. We first extend the projection $\pi_+$ to a rational map 
$\pi:\Gr(k,n)\dashrightarrow\Gr(k,k+m)$ whose fiber on each point $Y\in\A_{n,k,m}$ is related to the positive fiber, namely $\pi_+^{-1}(Y)=\pi^{-1}(Y) \cap \Gr_+(k,n)$. We then use this property to provide a compact parametrization of fibers as positive regions inside the Grassmannian $\Gr(k,n-m)$ and their boundaries; See Propositions~\ref{prop:fiber} and~\ref{prop:boundaries}.
Moreover, for each point $Y\in \mathcal{A}_{n,k,m}$ we show that the fiber $\pi^{-1}(Y)$ can be endowed with a rational top-degree form $\omega_{n,k,m}(Y)$ with simple poles along a collection of divisors. 
We call $\omega_{n,k,m}$ the \emph{fiber volume form} that can be explicitly computed using the aforementioned parametrization. In particular, this leads to an explicit description of $\omega_{n,k,m}$ as follows:

\begin{NoNumberTheorem}
Let $Y \in \mathcal{A}_{n,k,m}$ and $\omega_{n,k,m}(Y)$ be the volume form on the fiber $\pi^{-1}(Y)$. Then:
\begin{itemize}
    \item There exists an $(n-m)\times n$ matrix $A$ such that the fiber $\pi^{-1}(Y)$ is birationally parametrized via
\begin{equation} 
\Gr(k,n-m) \dashrightarrow \pi^{-1}(Y), \quad \lambda \mapsto \lambda \cdot A.
\tag{Proposition~\ref{prop:fiber}}
\end{equation}
\item Under the parametrization above, the volume form $\omega_{n,k,m}$ on the fiber $\pi^{-1}(Y)$ 
is
 \begin{equation}
  \omega_{n,k,m}(Y)= p_{J}(\lambda)^{-k} \frac{\mu_{\Gr(k,n-m)}(\lambda)}{\prod_{i=1}^n \det \left(\lambda|(A^{I_i})^\perp \right)},
  \tag{Theorem~\ref{prop:omega}}
\end{equation}
where $J$ is a $k$-subset of the rows of the matrix $A$ and $\mu_{\Gr(k,n-m)}$ is the standard measure on $\Gr(k,n-m)$. Here $(A^{I_i})^\perp$ spans the $(n-m-k)$-dim orthogonal complement of the $k$-dim span of the columns of $A$ labeled by $\{i,\ldots,i+k-1 \}$.
\item 
Every generalized triangle
$S_M$ corresponds to a pole $q_M$ of $\omega_{n,k,m}(Y)$. Moreover, given a triangulation $\mathcal{C}=\lbrace S_M \rbrace$ of $\mathcal{A}_{n,k,m}$, the canonical form ${\bf \Omega}(\mathcal{A}_{n,k,m})(Y)$ can be obtained as
\begin{equation}
    {\bf \Omega}(\mathcal{A}_{n,k,m})(Y) = \mu_{\Gr(k,k+m)}\cdot \sum_{S_M\in \mathcal{C}} {\rm Res}_{q_M} \omega_{n,k,m}(Y).
\tag{Theorem~\ref{thm:resfiber}}
\end{equation}
\end{itemize}
\end{NoNumberTheorem}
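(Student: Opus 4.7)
The plan is to address the three bullets in order, building on the fiber analysis set up earlier in the paper.

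For the fiber parametrization, I would work with matrix representatives: if $V \in \Gr(k,n)$ is the row span of a $k \times n$ matrix $C$, then $\pi(V)$ is the column span of $Z C^{T}$. A plane $V$ lies in the fiber $\pi^{-1}(Y)$ exactly when $V \subset Z^{-1}(Y)$, where $Z^{-1}(Y) \subset \mathbb{R}^{n}$ denotes the preimage of $Y$ under $Z$. Since $Z$ has full row rank $k+m$, this preimage is $(n-m)$-dimensional. Choosing any $(n-m) \times n$ matrix $A$ whose rows form a basis of $Z^{-1}(Y)$, the map $\lambda \mapsto \lambda A$ then birationally identifies $\Gr(k,n-m)$ with $\pi^{-1}(Y)$; the inverse sends a fiber point $V$ to its coordinate vector in this chosen basis.

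For the explicit formula for $\omega_{n,k,m}(Y)$, the strategy is to characterize it by its divisor: $\omega_{n,k,m}(Y)$ is the unique (up to scalar) top-form on the fiber whose polar locus is the union of the $n$ cyclic boundary divisors of the positive fiber $\pi_+^{-1}(Y)$ described in Proposition~\ref{prop:boundaries}. Under $\lambda \mapsto \lambda A$, the $i$-th such divisor is cut out by the Plücker coordinate $p_{\{i,\ldots,i+k-1\}}(\lambda A)$, which by the standard duality of complementary minors equals, up to sign, $\det(\lambda \mid (A^{I_i})^{\perp})$, with $A^{I_i}$ the $(n-m) \times k$ submatrix of $A$ on the columns $\{i,\ldots,i+k-1\}$. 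The product of these $n$ determinants carries a definite $\GL(k)$-weight in $\lambda$ that differs from that of the Stiefel numerator $\mu_{\Gr(k,n-m)}$, and the factor $p_{J}(\lambda)^{-k}$ for the fixed reference $k$-subset $J$ precisely cancels this mismatch so that the resulting form is invariant under $\lambda \mapsto g\lambda$ for $g\in\GL(k)$, hence descends to a well-defined top-form on $\Gr(k,n-m)$. Uniqueness of such a top-form with prescribed simple-pole divisor then forces the displayed expression.

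For the residue summation, each generalized triangle $S_M$ is the injective image under $\pi_+$ of a positroid cell $C_M$ of dimension $km$, so for generic $Y \in S_M$ the intersection $\pi^{-1}(Y) \cap \overline{C_M}$ is a single point $q_M$ lying at the transverse meeting of the $k(n-m-k)$ boundary divisors dual to $C_M$. Hence $q_M$ is a simple iterated pole of $\omega_{n,k,m}(Y)$, and its iterated residue is a scalar whose product with $\mu_{\Gr(k,k+m)}$ equals the canonical form of $S_M$ at $Y$; summing over the triangulation $\mathcal{C}$ then recovers $\mathbf{\Omega}(\mathcal{A}_{n,k,m})(Y)$ by the standard additivity of canonical forms along a triangulation. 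The main obstacle is the second bullet: one must verify that the specific combinatorial recipe, with its chosen reference subset $J$ and cyclic column blocks $I_i$, indeed has exactly the polar divisor predicted by Proposition~\ref{prop:boundaries} and no extraneous poles. Once that is established, the first bullet is an elementary linear-algebra calculation and the third follows from standard triangulation/residue arguments for positive geometries.
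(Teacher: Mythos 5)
The first bullet of your proposal is correct and is essentially the paper's proof of Proposition~\ref{prop:fiber}. The genuine gap is in your second bullet, and it propagates into the third. You propose to pin down $\omega_{n,k,m}$ as ``the unique (up to scalar) top-form on the fiber whose polar locus is the union of the $n$ cyclic boundary divisors,'' and to verify the displayed formula by a $\GL_k$-invariance check. Both halves fail. The expression is \emph{not} $\GL_k$-invariant: under $\lambda\mapsto g\lambda$ the numerator $\mu_{\Gr(k,n-m)}(\lambda)$ scales by $\det(g)^{\,n-m}$, the product $\prod_{i=1}^n\det\left(\lambda|(A^{I_i})^\perp\right)$ by $\det(g)^{\,n}$, and $p_J(\lambda)^{-k}$ by $\det(g)^{-k}$, so the total weight is $-(m+k)$, not $0$; the fiber volume form is a \emph{covariant} (twisted) form of weight $-(m+k)$, exactly as noted after Definition~\ref{def:omegafiber}, so $p_J(\lambda)^{-k}$ cannot be explained as ``the factor restoring invariance.'' Moreover $p_J(\lambda)^{-k}$ contributes a pole along $\{p_J(\lambda)=0\}$, which is not among the boundary divisors $\mathcal{B}_i$ of Proposition~\ref{prop:boundaries}, so your proposed polar-locus characterization does not even match the formula you want to prove; and in any case a prescribed simple-pole divisor determines a rational top-form only up to multiplication by a rational function regular off that divisor (its zeros are unconstrained), so ``unique up to scalar'' needs an argument you do not give.

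More fundamentally, $\omega_{n,k,m}$ is not defined as a canonical form of the positive fiber: it is defined by the factorization $\mathbf{\Omega}(\Gr_+(k,n))=\omega\wedge\pi^*(\mu_{\Gr(k,k+m)})$ (Definition~\ref{def:omegafiber}), and any proof of Theorem~\ref{prop:omega} must produce that exact normalization, not merely the right poles up to scale. The paper does this through Proposition~\ref{prop:formfiberchar}: one works with basic covariant forms on $\Mat(k,n)$, makes the ansatz $\omega=f(C)\bigwedge_\alpha\langle C\,A^\perp\,{\rm d}^{n-m-k}C_\alpha\rangle/\prod_i p_{I_i}(C)$, checks that contractions with the $\GL_k$ vector fields vanish, and fixes $f(C)=\langle C\,A^\perp Z^\perp\rangle^{-k}$ by demanding $\widetilde\omega\wedge{\rm d}^{k(m+k)}Y^*={\rm d}^{kn}C$; restricting to the fiber is what produces the $p_J(\lambda)^{-k}$ factor. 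Without this normalization your third bullet also breaks: even granting uniqueness up to a constant, that constant is precisely what makes ${\rm Res}_{q_M}\,\omega_{n,k,m}$ equal the canonical function of $\Delta_M$ rather than an undetermined multiple of it, and the paper's proof of Theorem~\ref{thm:resfiber} runs through the identity ${\rm Res}_{S_M}\mathbf{\Omega}(\Gr_+(k,n))=({\rm Res}_{S_M}\omega)\wedge\pi^*(\mu_{\Gr(k,k+m)})$ rather than an assertion. Finally, in the third bullet you pass from injectivity of $\pi_+$ on the real cell to ``$\pi^{-1}(Y)\cap\overline{S_M}$ is a single point''; this requires birationality of $\pi$ on the Zariski closure, i.e.\ Conjecture~\ref{conjtriangles}(ii), which must be invoked explicitly (as the paper does), and your picture of $q_M$ as a transverse intersection of exactly $k(n-m-k)$ boundary divisors is not accurate in the non-linear case (see Example~\ref{exam:non-linear}).
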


As mentioned above, a natural way to compute the canonical form of the amplituhedron is by finding its subdivisions. In particular, in \S\ref{sec:linear} we focus on the case of conjugate to polytopes, i.e.~$\mathcal{A}:=\mathcal{A}_{n,n-m-1,m}$. We show that, in this case the volume form $\omega_{n,k,m}(Y)$ has only poles along a set of hyperplanes $\mathcal{H}_1(Y),\ldots, \mathcal{H}_n(Y)$. We use this, together with the fiber-based approach developed in \S\ref{sec:fibers} to define a well-structured notion of secondary amplituhedra. This leads to a classification of positroidal dissections of $\mathcal{A}$. More precisely, let $W_i(Y)$ be a normal ray of the hyperplane $\mathcal{H}_i(Y)$ for each $i$ and let $\mathcal{F}_{\mathcal{A}}(Y)$ be the chamber fan of the rays $W_1(Y),\ldots,W_n(Y)$; See Definition~\ref{def:chamberfan}. We show that under certain conditions, $\mathcal{F}_\mathcal{A}(Y)$ is the secondary fan of $\mathcal{A}$. 
In particular, in \S\ref{subsec:triangconjpol} we show that:

\begin{NoNumberTheorem}[Proposition~\ref{prop_faneq} and Theorem~\ref{triangamp}]
The fan $\mathcal{F}_\mathcal{A}(Y)$ is combinatorially equivalent to the secondary fan of a cyclic polytope. 
Moreover, the cones of $\mathcal{F}_\mathcal{A}(Y)$ are in bijection with the positroidal dissections of $\mathcal{A}$. The inclusion between cones corresponds to coarsening between their associated dissections. In particular, maximal cones of $\mathcal{F}_\mathcal{A}(Y)$ correspond to positroidal triangulations of $\mathcal{A}$. 

\end{NoNumberTheorem}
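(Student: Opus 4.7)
My plan is to split the argument into the two labeled results and then identify the main obstacle. Throughout I would lean on the explicit fiber volume form from Theorem~\ref{prop:omega} and the residue identity of Theorem~\ref{thm:resfiber}; the first produces the hyperplane arrangement, the second converts cones of the arrangement into sums of residues that assemble ${\bf\Omega}(\mathcal{A})$.

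For Proposition~\ref{prop_faneq}, I would first observe that in the conjugate-to-polytope regime $k = n-m-1$ the source $\Gr(k, n-m)$ of the birational parametrization from Proposition~\ref{prop:fiber} is the projective space $\mathbb{P}^{n-m-1}$, so that each denominator factor $\det(\lambda \mid (A^{I_i})^\perp)$ in Theorem~\ref{prop:omega} is an honest linear form on $\mathbb{P}^{n-m-1}$ and the $\mathcal{H}_i(Y)$ form a genuine hyperplane arrangement. I would then extract the normal rays $W_i(Y)$ directly from the matrix $A$ produced by Proposition~\ref{prop:fiber} and verify that the total positivity of $Z$ forces the $W_i(Y)$ to have the signed Vandermonde / cyclically-positive oriented matroid of a Gale transform of the vertices of a cyclic polytope $C(n,m)$. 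By the classical Gelfand--Kapranov--Zelevinsky and Billera--Sturmfels theory, the chamber fan of such a Gale-dual configuration is exactly the secondary fan of the corresponding cyclic polytope, which yields the combinatorial equivalence.

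For Theorem~\ref{triangamp}, I would use Theorem~\ref{thm:resfiber}: each generalized triangle $S_M$ corresponds to a simple pole $q_M$ of $\omega_{n,k,m}(Y)$, which in the present linear setting is the transverse intersection of $n-m-1$ hyperplanes $\mathcal{H}_i(Y)$, i.e.\ a simplicial cone spanned by $n-m-1$ of the rays $W_i(Y)$. I would then show that a collection $\{S_M\}$ is a positroidal dissection of $\mathcal{A}$ if and only if the associated simplicial cones tile a single cone of $\mathcal{F}_\mathcal{A}(Y)$: in one direction the cancellation of boundary residues required by Theorem~\ref{thm:resfiber} forces the simplicial cones to be non-overlapping and complementary inside a chamber, and in the other direction any face of the chamber fan manifestly produces a consistent residue expression. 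Maximal cones then recover triangulations (full chambers), while face inclusion matches the coarsening order on dissections.

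The main obstacle I anticipate is the oriented-matroid step inside Proposition~\ref{prop_faneq}: establishing that the configuration $\{W_i(Y)\}$ has the precise cyclic Gale oriented matroid \emph{uniformly} in $Y \in \mathcal{A}$, not only at a generic point. This will require carefully propagating the total positivity of $Z$ through the explicit construction of $A$ in Proposition~\ref{prop:fiber} and checking the needed signed minor identities. Once that rigidity is in place, the classical Gale/secondary-fan dictionary settles Proposition~\ref{prop_faneq}, and the residue formula of Theorem~\ref{thm:resfiber} drives the bijection in Theorem~\ref{triangamp}.
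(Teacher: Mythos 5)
Your outline for Proposition~\ref{prop_faneq} follows the paper's route (linear boundaries of the fiber, rays $W^{I_i}$, Gale duality, chamber fan $=$ secondary fan), but the step you defer as "the main obstacle" is not a routine signed-minor verification: it is exactly Conjecture~\ref{con:main}, which the paper does \emph{not} prove. The paper shows (Proposition~\ref{propvect}, Lemma~\ref{dets}, Theorem~\ref{geomchar}) that $\det(W^{I_{j_1}},\ldots,W^{I_{j_k}})$ equals, up to an explicit sign, the function $\langle Y\,\bar{I}_{j_1}\cap\cdots\cap\bar{I}_{j_k}\rangle$, and then \emph{assumes} that these functions have a fixed sign on all of $\mathcal{A}$; this positivity is only checked computationally for small $(n,m)$ and is related to Grassmannian cluster positivity, and the independence of the combinatorial type of $\mathcal{F}_\mathcal{A}(Y)$ from $Y$ is deduced from it by a path/connectedness argument (Lemma~\ref{lem:combtype}). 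So "propagating total positivity of $Z$ through the construction of $A$" has no known proof; your plan is missing its central ingredient, and the paper's results are explicitly conditional on the conjecture.

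For Theorem~\ref{triangamp} your mechanism is both different from the paper's and combinatorially incorrect. In the Gale/secondary-fan dictionary used here (Theorem~\ref{thgaledual}, Definition~\ref{def:triangamp}), the cells of the dissection attached to a cone $\sigma$ are indexed by the subsets $J$ with $\sigma\subseteq C_J$: these simplicial cones all \emph{contain} $\sigma$ and overlap heavily, so they do not "tile a single cone," and nothing forces them to be "non-overlapping and complementary inside a chamber." Moreover Theorem~\ref{thm:resfiber} takes a triangulation as input and outputs a residue formula; it does not characterize which collections of positroid cells form dissections, so "cancellation of boundary residues" cannot serve as the defining criterion (and your if-and-only-if would anyway lean on Conjecture~\ref{conj:canformfromtriang}). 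The paper argues geometrically instead: by Lemma~\ref{lem:gentriangles} (parity duality with the cyclic polytope) each subset $J$ indexes a unique cell $S_{\widetilde{J}}$, whose closure meets the positive fiber over $Y$ precisely when $C_J$ occurs in the normal fan $\mathcal{F}^+_\mathcal{A}(Y)$ of that fiber; using Corollary~\ref{cor:min} ($\mathcal{F}_\mathcal{A}$ is the minimal common refinement of these normal fans) one assigns to each cone $\sigma$ and each $Y$ the minimal cone $\sigma(Y)$ of $\mathcal{F}^+_\mathcal{A}(Y)$ containing $\sigma$, and the local constancy of $Y\mapsto\sigma(Y)$ yields that the images of the cells in $\mathcal{C}_\sigma$ are pairwise disjoint and cover $\mathcal{A}$, with coarsening corresponding to inclusion of cones. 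None of this positive-fiber normal-fan machinery appears in your plan, and the residue argument you sketch would not replace it.
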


In \S\ref{sec:JK}, we focus on the computation of the canonical form of the amplituhedron in terms of the fiber volume form. More precisely, we study the following algebraic-analytic question: 

\begin{Question}\label{ques:res}
What residue procedure on $\omega_{n,k,m}$ leads to the computation of $\mathbf{\Omega}\left(\mathcal{A}_{n,k,m} \right)$?
\end{Question}
We solve Question~\ref{ques:res} for the case of  
cyclic polytopes in $\mathbb{P}^m$ and 
conjugate to polytopes. Following \cite{ferro2018amplituhedron}, we connect the computation of $\mathbf{\Omega}\left(\mathcal{A}_{n,k,m} \right)$ to the Jeffrey-Kirwan residue $\mathrm{JK}_{\xi}(\cdot)$. We show that this approach provides the right framework to study the linear fibers, as $\omega_{n,k,m}$ has only poles on a collection of affine hyperplanes. 
More precisely, we first extend the classical notion of the Jeffrey-Kirwan residue in Definition~\ref{def:JKRes} for the fiber volume forms. Then we show that the collection of poles of $\omega_{n,k,m}$ picked up by the Jeffrey-Kirwan residues coincides with the Gale duality procedure of identifying cells in a triangulation. In particular, we give conceptual proofs of the main statements in \cite{ferro2018amplituhedron}.

\begin{NoNumberTheorem}[Theorem~\ref{th:jkfibers}]
Let $\mathcal{A}_{n,k,m}$ be either a cyclic polytope or 
conjugate to a polytope for an even $m$.  
Let $\xi$ be a generic point in the secondary fan of $\mathcal{A}_{n,k,m}$. Then the canonical form $\mathbf{\Omega}(\mathcal{A}_{n,k,m})$ can be obtained from the Jeffrey-Kirwan residue of the fiber volume form $\omega_{n,k,m}$. More precisely,
\begin{equation*}
\mathbf{\Omega}(\mathcal{A}_{n,k,m}) = \mathrm{JK}_{\xi}( \omega_{n,k,m}) \cdot \mu_{\Gr(k,k+m)}.
\end{equation*}
\end{NoNumberTheorem}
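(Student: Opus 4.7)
The plan is to reduce the Jeffrey--Kirwan identity to the residue formula of Theorem~\ref{thm:resfiber} by exploiting the linear structure of the fibers in the cases under consideration. The starting point is the identity
\begin{equation*}
\mathbf{\Omega}(\mathcal{A}_{n,k,m})(Y) = \mu_{\Gr(k,k+m)} \cdot \sum_{S_M \in \mathcal{C}} \mathrm{Res}_{q_M}\, \omega_{n,k,m}(Y),
\end{equation*}
valid for any positroidal triangulation $\mathcal{C} = \{S_M\}$ of $\mathcal{A}_{n,k,m}$. It therefore suffices to exhibit, for a generic $\xi$ in the secondary fan, a positroidal triangulation $\mathcal{C}_\xi$ such that the Jeffrey--Kirwan residue of $\omega_{n,k,m}$ reproduces this sum term by term.

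First I would use the results of Section~\ref{sec:linear}: for $\mathcal{A}_{n,k,m}$ either a cyclic polytope or conjugate to a polytope with $m$ even, the fiber $\pi^{-1}(Y)$ is parametrized by a projective space, so that the denominator of $\omega_{n,k,m}(Y)$ factorizes as a product of linear forms defining the affine hyperplanes $\mathcal{H}_1(Y),\ldots,\mathcal{H}_n(Y)$. Consequently, the poles $q_M$ appearing in Theorem~\ref{thm:resfiber} are precisely the zero-dimensional strata of this linear arrangement, and $\omega_{n,k,m}$ falls squarely within the setting in which Definition~\ref{def:JKRes} of the Jeffrey--Kirwan residue applies.

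Next, I would set up the combinatorial bijection between $\mathrm{JK}_\xi$ contributions and generalized triangles. By Proposition~\ref{prop_faneq} and Theorem~\ref{triangamp}, a generic $\xi$ lies in the interior of a unique maximal cone of the chamber fan $\mathcal{F}_\mathcal{A}(Y)$, which is a cone of the secondary fan and hence corresponds to a positroidal triangulation $\mathcal{C}_\xi$ of $\mathcal{A}_{n,k,m}$. Under the Gale-dual description of $\mathcal{F}_\mathcal{A}(Y)$ in terms of the normal rays $W_1(Y),\ldots,W_n(Y)$, the generalized triangles $S_M \in \mathcal{C}_\xi$ are indexed by those subsets $I$ for which $\xi$ lies in the interior of $\mathrm{Cone}(W_i(Y) : i \in I)$. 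On the other hand, by Definition~\ref{def:JKRes}, the iterated residues contributing to $\mathrm{JK}_\xi(\omega_{n,k,m})$ are supported at intersection points $q_I = \bigcap_{i \in I} \mathcal{H}_i(Y)$ indexed by exactly the same family of subsets $I$. This identifies the support of $\mathrm{JK}_\xi(\omega_{n,k,m})$ with $\{q_M : S_M \in \mathcal{C}_\xi\}$.

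The hard part will be matching each Jeffrey--Kirwan iterated residue to the corresponding residue $\mathrm{Res}_{q_M}\, \omega_{n,k,m}$ with the right sign and normalization: this requires comparing the Jacobian of the linear forms defining the $\mathcal{H}_i(Y)$ at the pole $q_M$ with the Pl\"ucker-coordinate numerator of $\omega_{n,k,m}$ obtained from Theorem~\ref{prop:omega}, while keeping track of the orientation fixed by $\xi$ via Gale duality. Once this sign compatibility is checked, summing over $S_M \in \mathcal{C}_\xi$ yields the claimed identity
\begin{equation*}
\mathbf{\Omega}(\mathcal{A}_{n,k,m}) = \mathrm{JK}_\xi(\omega_{n,k,m}) \cdot \mu_{\Gr(k,k+m)}.
\end{equation*}
The cyclic polytope case $k=1$ should serve as the template, since the required Gale duality calculation there is the classical one underlying \cite{ferro2018amplituhedron}; the conjugate-to-polytope case $n-k-1=m$ then reduces to the analogous linear-algebraic computation via the projective parametrization of the fiber furnished by Proposition~\ref{prop:fiber}.
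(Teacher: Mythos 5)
Your proposal follows essentially the same route as the paper's proofs of Proposition~\ref{prop:jkpolytopes} and Theorem~\ref{th:jkfibers}: reduce to the residue formula of Theorem~\ref{thm:resfiber}, use the linear fiber structure of Corollary~\ref{cor:volumefibers} together with Theorem~\ref{thgaledual} (for $k=1$) and Lemma~\ref{lem:Wi}/Theorem~\ref{triangamp} (for the conjugate case) to identify the cones $C_I$ containing $\xi$ with a positroidal triangulation, and then convert the Jeffrey--Kirwan residue into the sum of residues at the points $q_I$ via Proposition~\ref{prop:jkresasold}. The sign-and-normalization check you defer is resolved in the paper not by a Jacobian comparison but by a short positivity argument: for $Y\in\Delta_I$ the point $q_I$ lies in the positive fiber, so all defining linear forms satisfy $D_i(q_I)\geq 0$, hence $\mathrm{Res}_{C_I}\omega>0$, which matches the orientation convention of Remark~\ref{rk:orientation2} and gives $\mathrm{Res}_{C_I}\omega=\Omega(\Delta_I)$ with the correct sign.
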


As amplituhedra are positive geometries, in \S\ref{sec:fiberposgeom} we study the more general framework of \emph{fiber positive geometries}. In this context, the positive Grassmannian can be seen as a simplex, thus the notion of fiber positive geometries is analogous to the notion of fiber polytopes. More precisely, let $\pi: X\dashrightarrow Y$ be a rational dominant map which restricts to a regular map $\pi_+: X_+\to Y_+$. Assume also that the pairs $(\pi^{-1}(y), \pi_+^{-1}(y))$ are  positive geometries for all $y \in Y_+$. Then we propose a set of properties that fiber positive geometries ought to have.
In particular, for a collection $\{X_a\}$ of strata of $X_+$ such that their images provide a dissection of $Y$, one can obtain the canonical form $\mathbf{\Omega}(Y_+)$ by summing over canonical forms of the images of such strata. 
Moreover, similar to the amplituhedron case we define the notion of volume form $\omega_\pi(y)$ on the fibers $\pi^{-1}(y)$ which would encode all the information about the triangulations of $Y_+$. Generalizing Theorem~\ref{thm:resfiber}, we expect that $\mathbf{\Omega}(Y_+)$ can be obtained via a residue procedure from $\omega_\pi(y)$. This leads us to the following question:
\begin{Question}\label{quest:general}
Is there a notion of fiber positive geometry for the projection $\pi: X\dashrightarrow Y$ which parametrizes (a nice class of) dissections of $(Y,Y_+)$ by a strata of $(X,X_+)$? 
What is the residue procedure on $\omega_\pi(y)$ which allows to obtain $\mathbf{\Omega}(Y_+)$ and reflects such combinatorics?
\end{Question}
We conclude by mentioning some examples relevant to this framework. In particular, we describe how a residue \emph{à la} Jeffrey-Kirwan can be used in the context of fiber polytopes. Finally, we consider an example of (linear) fiber positive geometry which comes from a Grassmann polytope; See Example~\ref{ex:linearfiber}.

\medskip\noindent{\bf Structure of the paper.}
In \S\ref{sec:polytopes} we review some of the main tools used in this work, including fiber polytopes, 
secondary fans, and Gale duality.
In \S\ref{sec:amplituhedron} we recall the definitions of the positive Grassmannian, the amplituhedron and its  triangulations. We also review the construction of the canonical forms of both objects and the parity duality for triangulations of the amplituhedron.
\S\ref{sec:fibers} contains some of our main results. 
In \S\ref{page:full_fiber} we define the positive and full fibers of the amplituhedron and provide them with a birational parametrization. In \S\ref{sec:formfibers} we introduce the fiber volume forms $\omega_{n,k,m}$ and express them explicitly in terms of the parametrization given in \S\ref{page:full_fiber}. We then explain the relation between the fiber volume form and the canonical form of the amplituhedron in \S\ref{subsec:fibervolformtriang}.
In \S\ref{sec:linear} we study triangulations of amplituhedra in the case of conjugate to polytopes and 
define a well-structured notion of secondary fan.
In \S\ref{sec:JK} we extend the classical Jeffrey-Kirwan residue and use it to compute the canonical form of the amplituhedron from the fiber volume forms.
Finally, in \S\ref{sec:fiberposgeom} we introduce the more general framework of fiber positive geometries and in \S\ref{sec:conclusions} we outline future directions.

\medskip
\noindent{\bf Acknowledgments.}
We would like to thank Hugh Thomas for pointing out the reference \cite{athanasiadis2000fiber} and the anonymous referees for their insightful comments, especially for pointing out the reference \cite{MR3908855}. M.P. would like to thank the generous support of the Sachs Scholarship at Princeton and the ERC Grant \#724638, and also 
the great hospitality of the School of Mathematics of the University of Bristol, in particular F.M., where this work has been developed. M.P. would also like to thank P. Benincasa, L. Ferro, T. Łukowski, and L. Williams for useful discussions.
F.M. was partially supported by the EPSRC Early Career Fellowship EP/R023379/1, the UGent Starting Grant BOF/STA/201909/038, and the FWO grants (G023721N, G0F5921N). L.M. was funded by the EPSRC %Early Career Fellowship 
EP/R023379/1. 

\medskip

\paragraph{Notation.}
Throughout we let $[n]=\{1,\ldots,n\}$ and we denote $\binom{[n]}{k}$ for the collection of subsets of $[n]$ of size $k$. We call $I_j:=\{j,\ldots, j+k-1\}$ the $j^{\rm th}$ {\em cyclic $k$-interval} of $[n]$ where all the numbers are considered modulo $n$. 
For any subset $I\subset [n]$ we denote $\bar{I}$ for its complement that is $\bar{I}=[n] \backslash I$.

\section{Fibers and triangulations of polytopes}\label{sec:polytopes}

In this section, we give a brief introduction to subdivisions of polytopes. Most of the results of this section are well-known, but we rephrase them to make them more suited to a generalization in the context of amplituhedra. 
We refer to \cite{billera1992fiber} and \cite[\S9]{ziegler} for further details.

A slight difference of our approach from the one in the literature (for polytopes) is that we aim to work with the secondary fan, rather than the secondary polytope. This method will, in particular, allow us to study the secondary fan of a hyperplane arrangement. Note that every convex polytope is naturally endowed with a hyperplane arrangement (corresponding to its facets).  We use this approach as it is more suitable for generalizations to the study of triangulations of amplituhedron with linear fibers. In particular, we use the framework developed along this section to extend the theory of secondary fans from convex polytopes to amplituhedra in \S\ref{sec:fan_conjugate_polytopes}. 

\medskip
\noindent{\bf Notation.} Here $Q\subset \mathbb{R}^n$ and $P\subset \mathbb{R}^m$ denote some convex polytopes. We further fix
\begin{equation}\label{eq:pi}
\text{a linear map}\ \pi:\R^n\to \R^m\ \text{which induces a surjective map}\ \pi_+:Q \to P,
\end{equation}
sending the vertices of $Q$ to the vertices of $P$.

\subsection{Triangulations and subdivisions of polytopes} 
\begin{definition}\label{def:R(P)}
{\rm A {\em subdivision} of a full-dimensional polytope $P\subset \mathbb{R}^m$ is a representation of $P$ as a finite union of a collection of full-dimensional polytopes $R(P)$ such that every two polytopes are either disjoint or intersect by a common proper face. Moreover, the vertices of the polytopes in $R(P)$ are vertices of $P$. 
A subdivision $R(P)$ is called a \emph{triangulation} of $P$ if all polytopes are simplices. A subdivision is called \emph{regular} if there exists a convex piecewise linear function which is linear on any polytope in $R(P)$.
}\end{definition}

Here, we are interested in regular subdivisions of $P$ which are induced by the map $\pi$ from~\eqref{eq:pi}. 

\begin{definition}
{\rm Given a projection $\pi_+: Q\to P$, a subdivision $R(P)$ of $P$ is called \emph{$\pi$-induced} if every polytope in $R(P)$ is the image of some face of $Q$. Regular $\pi$-induced subdivisions are called \emph{$\pi$-regular}.\footnote{Such subdivisions are also called \emph{$\pi$-coherent}, but in the context of amplituhedron, the term regular is more common.}
}\end{definition}

\begin{example}
Each regular subdivision in Definition~\ref{def:R(P)} is $\pi$-regular for some $\pi$. To see this correspondence, assume that $f$ is a piecewise linear function inducing the regular subdivision of $P$. Then one can consider $Q\subset \R^m\times \R$ to be the convex hull of the graph of $f$ and $\pi:\R^m\times \R \to\R^m$ the natural projection. 
\end{example}

\subsection{Fiber polytopes and \texorpdfstring {$\pi$}--regular subdivisions}   
To any linear projection of convex polytopes $\pi_+: Q\to P$, Billera and Sturmfels  \cite{billera1992fiber} associated the so-called \emph{fiber polytope} $\Sigma(Q, P)$ which is constructed by taking the Minkowski integral of fibers of $\pi_+$ as:
\[
\Sigma(Q, P) = \int_{x\in P}  \pi_+^{-1}(x) \,\,\mbox{d}x.
\]
The Minkowski integral of convex bodies can be defined as the limit of finite Riemann (Minkowski) sums, for a precise definition see \cite{billera1992fiber}. The fiber polytope encodes the $\pi$-regular subdivisions of $P$ as follows:

\begin{Theorem}[{\cite[Theorem 2.4]{billera1992fiber}}]\label{fiberpolytop}
The fiber polytope $\Sigma(Q, P)$ is a polytope of dimension $\dim(Q)-\dim(P)$,
whose faces correspond to $\pi$-regular subdivisions of $P$.
In particular, the vertices of $\Sigma(Q, P)$ correspond to the finest $\pi$-regular subdivisions, while the facets correspond to the coarsest subdivisions.
\end{Theorem}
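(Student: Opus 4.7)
The strategy is to characterize the faces of $\Sigma(Q,P)$ via support functions on $\R^n$ and match them with $\pi$-induced regular subdivisions of $P$. For any linear functional $\psi \in (\R^n)^*$, by linearity of the Minkowski integral the face of $\Sigma(Q,P)$ minimizing $\psi$ is obtained by integrating the $\psi$-minimizing faces of the fibers $\pi_+^{-1}(x)$ as $x$ ranges over $P$. Concretely, I would define a \emph{section} of $\pi_+$ to be a measurable map $\gamma \colon P \to Q$ with $\pi \circ \gamma = \mathrm{id}_P$; each section yields a point $\int_P \gamma(x)\, dx \in \Sigma(Q,P)$, and every point of the fiber polytope arises this way.

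For a generic $\psi$, the $\psi$-minimizing face of every fiber $\pi_+^{-1}(x)$ is a single vertex, and the assignment $\gamma_\psi(x) := \mathrm{argmin}_{y \in \pi_+^{-1}(x)} \psi(y)$ is piecewise affine; its graph is the lower $\psi$-envelope of $Q$, i.e.~a union of faces of $Q$. Projecting these faces to $P$ produces a $\pi$-induced subdivision $\mathcal{S}_\psi$, which is regular because $h_\psi(x) := \psi(\gamma_\psi(x))$ is a convex piecewise linear function on $P$, affine precisely on each cell of $\mathcal{S}_\psi$. Conversely, every $\pi$-regular subdivision arises as $\mathcal{S}_\psi$ for some $\psi$: a convex piecewise linear function witnessing regularity can be encoded by a suitable $\psi$ on $\R^n$ that selects, on each fiber, the vertex lying over the chosen face.

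To extend the correspondence to all faces, I would allow $\psi$ to degenerate: the $\psi$-minimizing face of $\pi_+^{-1}(x)$ may then be positive-dimensional, and integrating the resulting family produces a positive-dimensional face of $\Sigma(Q,P)$. The map $\psi \mapsto \mathcal{S}_\psi$ is constant on the relatively open cones of a complete fan in $(\R^n)^* / \pi^*(\R^m)^*$, and these cones coincide with the normal cones of the faces of $\Sigma(Q,P)$. Coarsening of subdivisions corresponds to passing to lower-dimensional cones of this fan, hence to higher-dimensional faces of $\Sigma(Q,P)$, so vertices correspond to the finest $\pi$-regular subdivisions and facets to the coarsest nontrivial ones.

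The dimension claim follows from the identity $\pi\bigl(\int_P \gamma(x)\, dx\bigr) = \int_P x\, dx$, which shows that $\Sigma(Q,P)$ lies in an affine translate of $\ker(\pi) \cap \mathrm{aff}(Q)$, a subspace of dimension $\dim(Q)-\dim(P)$; equality is obtained by perturbing a generic section within each fiber to span every kernel direction. The main technical obstacle is making the Minkowski integral rigorous (as a limit of Minkowski sums over refining partitions of $P$) and proving that the resulting convex body is actually a polytope rather than a general convex body. This finiteness of faces reduces to the finiteness of combinatorial types of the sections $\gamma_\psi$, which are themselves parametrized by the finitely many $\pi$-induced subdivisions of $P$.
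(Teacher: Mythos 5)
The paper does not actually prove this statement: it is imported from Billera--Sturmfels \cite[Theorem~2.4]{billera1992fiber}, and the closest in-paper argument is the sketch of Theorem~\ref{triang}, which uses the same device you do, namely the section selecting the $\psi$-extremal vertex of each fiber. Your proposal is, in outline, exactly the Billera--Sturmfels argument: points of $\Sigma(Q,P)$ as integrals of sections, the face in direction $\psi$ as the integral of the $\psi$-argmin faces of the fibers, constancy of $\psi\mapsto\mathcal{S}_\psi$ on the relatively open cones of a fan in $(\R^n)^*/\pi^*((\R^m)^*)\simeq(\ker\pi)^*$ identified with the normal fan of $\Sigma(Q,P)$, and the standard dimension and polytopality arguments (containment in a translate of $\ker\pi$, full-dimensionality by perturbing sections over an interior point, finiteness of combinatorial types of the argmin sections). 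So there is no divergence of method to report, only a comparison with the cited source.

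The one step you should not wave through is the converse, ``every $\pi$-regular subdivision arises as $\mathcal{S}_\psi$''. With the paper's literal definition of $\pi$-regular (regular and $\pi$-induced), a convex piecewise linear witness of regularity is an essentially arbitrary height function on $P$, whereas $\mathcal{S}_\psi$ only realizes the $(n-\dim P)$-dimensional family of height data obtained by restricting a single linear functional $\psi\in(\R^n)^*$ to the fibers of $Q$; outside the secondary-polytope case $Q=\Delta$ this is strictly less data, the two notions do not coincide in general, and your one-line ``encoding'' does not bridge them. What Billera--Sturmfels prove --- and what the paper intends, per its footnote identifying ``$\pi$-regular'' with ``$\pi$-coherent'' --- is the correspondence between faces of $\Sigma(Q,P)$ and $\pi$-coherent subdivisions, i.e.\ precisely those induced by some $\psi$. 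Under that reading your converse is definitional and your outline is complete; as written, however, the sentence asserts an equivalence between coherence and regularity of induced subdivisions that your argument does not establish and that is not valid for general $Q$.
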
 

We aim to generalize this construction from polytopes to amplituhedra. To do so, we first describe the correspondence in Theorem~\ref{fiberpolytop} more concretely. For simplicity of the exposition we restrict ourselves to \emph{generic} linear projections of polytopes. A linear projection $\pi_+:Q\rightarrow P$ is called \emph{generic} if  $\pi_+(A)\ne\pi_+(B)$ for every pair of vertices $A,B$ of $Q$.

\medskip

In the following, given $P\subset \R^m$ and a linear function $\psi\in (\R^m)^*$ we denote by $P^\psi$ the \emph{truncation} of $P$ in the direction of $\psi$. That is, $P^\psi$ is the face of $P$ where $\psi$ attains its maximum.

\begin{definition}\label{height}
{\rm Consider the polytopes $Q\subset \R^n$ and $P\subset \R^m$. Let 
$\pi_+:Q \to P$ be a linear projection. The linear functional $\psi \in (\R^n)^*$ is called {\em generic} if its restriction to $\ker\, \pi$ is non-zero and $\psi(A)\ne\psi(B)$ for any pair of vertices $A, B$ of $Q$. For any generic functional $\psi$ we define a section $s_\psi:P \to Q$ of $\pi_+$ as follows. For any point $x\in P$, let $P_x\subset Q$ be its preimage under $\pi_+$. Then the section $s_\psi$ is given by
$$
s_\psi(x) =P_x^\psi, 
$$
where $P_x^\psi$ is the truncation of $P$ in the direction of $\psi$. Having a generic $\psi$ guaranties that $P_x^\psi$ is a vertex of $P_x$, and therefore the section $s_\psi$ is well-defined. 
}\end{definition}

\begin{remark}\label{rem:sec}
It is clear that the section $s_\psi$ in Definition~\ref{height} only depends on the restriction of the linear functional $\psi$ to $\ker\,\pi$. Thus one can think of $\psi$ as an element of $(\ker\,\pi)^*$, which we will do from now on.
\end{remark}

The theorem below is a reformulation of some results in \cite{billera1992fiber} which relates Theorem~\ref{fiberpolytop} with the sections from Definition~\ref{height}. We refer to the proof of \cite[Theorem 9.6]{ziegler} for further details.
\begin{Theorem}\label{triang}
Let $\pi_+:Q\to P$ be a linear projection and $\psi \in (\ker\,\pi)^*$ a generic linear functional.~Then:
\begin{itemize}
\item The image of the section $s_\psi(P)=\{P_x^\psi:\ x\in P\}$ is a union of some faces of $Q$, denoted by
$R_\psi(P)$.
\item The collection
$R_\psi(P)$ is a $\pi$-regular partition
of $P$.
\end{itemize}
Moreover, for any vertex $A$ of $\Sigma(Q,P)$, the corresponding $\pi$-regular subdivision is equal to $R_\psi(P)$ for each $\psi$ in the interior of $\sigma_A$, where $\sigma_A$ is the dual cone of $A$. 
\end{Theorem}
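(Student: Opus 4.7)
The proof breaks naturally into three stages, the first two establishing the two bullet points and the third handling the final assertion about vertices of $\Sigma(Q,P)$.

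First, I would identify which faces of $Q$ appear in the image of $s_\psi$. For each $x \in P$ the fiber $P_x = Q \cap \pi^{-1}(x)$ is a polytope, and genericity of $\psi$ on $\ker \pi$ guarantees that $P_x^\psi$ is a single vertex of $P_x$. Such a vertex lies in the relative interior of a unique face $F$ of $Q$ of dimension $m = \dim P$ on which $\pi|_F$ is injective with full-dimensional image in $P$; call such $F$ \emph{admissible}. A continuity and parametric-LP argument then shows that only finitely many admissible faces $F$ appear as $s_\psi(x)$ varies, and that for each such $F$ the set $\pi(F)$ is exactly the closure of $\{x \in P : s_\psi(x) \in F\}$. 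This gives the first bullet.

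Second, I would show that $R_\psi(P)$ is a $\pi$-regular subdivision. Disjointness of the interiors of the $\pi(F)$ is immediate from the uniqueness of $s_\psi(x)$, and compatibility along shared walls follows because neighbouring admissible faces of $Q$ share a common codimension-one face whose projection is the shared wall. For regularity I would introduce the function $h : P \to \R$ defined by $h(x) = \psi(s_\psi(x))$: this is precisely the optimal value of the linear program ``maximize $\psi$ on $Q \cap \pi^{-1}(x)$,'' hence a concave piecewise-linear function whose domains of linearity are exactly the cells $\pi(F) \in R_\psi(P)$. This supplies the required convex piecewise-linear certificate.

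For the final assertion I would invoke the Minkowski-integral definition $\Sigma(Q,P) = \int_{x \in P} P_x\, dx$ and argue that $A(\psi) := \int_{x \in P} s_\psi(x)\, dx$ is a vertex of $\Sigma(Q,P)$ for every generic $\psi$, since $s_\psi$ is the unique section of $\pi_+$ maximizing $\psi$ pointwise in each fiber. Two generic functionals produce the same section (hence the same integral) precisely when they induce the same subdivision $R_\psi(P)$, and the locus of $\psi \in (\ker\pi)^*$ giving a fixed section is a relatively open polyhedral cone. Identifying this cone with the interior of the cone $\sigma_A$ dual to $A$ in the normal fan of $\Sigma(Q,P)$ is then a standard duality computation. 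The main obstacle I anticipate is cleanly matching this parametric description of the vertices of $\Sigma(Q,P)$ with their Minkowski-integral definition, which is essentially the content of \cite[Theorem~9.6]{ziegler}; everything else is bookkeeping once the admissible-face picture is in place, with genericity of $\psi$ corresponding precisely to avoiding the walls of the secondary fan.
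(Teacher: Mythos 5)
Your proposal is correct and follows essentially the same route as the paper's sketch: the image of the section $s_\psi$ is a union of faces, the section property yields a subdivision, the parametric optimal-value function $\psi\circ s_\psi$ serves as the piecewise-linear certificate of regularity, and the final identification with vertices of $\Sigma(Q,P)$ and their dual cones is the standard Billera--Sturmfels/Ziegler argument that the paper also defers to. The only cosmetic differences are your (correct) concavity of the value function versus the paper's stated convexity, which is just a sign convention, and your slight overstatement that $s_\psi(x)$ always lies in the relative interior of an $m$-dimensional face, which holds only over the open cells but is repaired by your closure argument.
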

\begin{proof}
We give a sketch of the proof. Let $s_\psi(x)$ be contained in a face $\Gamma$ of $Q$. To show the first point, it is enough to show that $s_\psi(x')\in \Gamma$ for any $x'\in \pi_+(\Gamma)$. This follows from the construction of the section $s_\psi$, as $\psi$ attains its maximum at $\pi^{-1}(x') \cap \Gamma$ for any $x'\in \pi_+(\Gamma)$. The second point follows from the fact that $s_\psi$ is a section, and hence the images of faces $R_\psi(P)$ form a subdivision of $P$. To show that $R_\psi(P)$ is regular, notice that the function $\psi\circ s_\psi: P \to \R$ is convex piecewise linear with respect to subdivision $R_\psi(P)$. Finally, it is easy to see that for any pair of covectors $\psi,\psi'$ in the interior of some cone $\sigma_A$ of the dual fan of $\Sigma(Q,P)$, the subdivisions $R_\psi(P)$ and $R_{\psi'}(P)$ coincide.
\end{proof}

\subsection{Secondary fans and triangulations of polytopes} We now focus on the case of regular triangulations of convex polytopes. We recall the notions of \emph{secondary polytopes} and \emph{secondary fans} which were first defined in \cite{gelfand2008discriminants}. We follow the notation of \cite{billera1992fiber}.

\smallskip
Let $\Delta\subset\R^n$ be the standard simplex realized as a full-dimensional polytope with $n$ vertices. Then, any convex polytope $P\subset \R^m$ with $n$ vertices can be realized as the image of $\Delta$ under a linear projection $\pi_+:\Delta\to P$. 
For the convenience of the exposition, we will work with a linear translation of $\Delta$ to the vector space parallel to the affine span of $\Delta$.  Note that this procedure will not change the fiber polytope $\Sigma(\Delta , P)$. In this special case, the fiber polytope $\Sigma(\Delta , P)$ is called the {\em secondary polytope} of $P$ and the vertices of $\Sigma(\Delta , P)$ are in one-to-one correspondence with the regular triangulations of $P$. 
In what follows it will be more convenient for us to work with the dual fan of secondary polytope. 
The dual fan of $\Sigma(\Delta , P)$, denoted by $\mathcal{F}(\Delta,P)$, is called the \emph{secondary fan} of $P$. Here, we  denote them simply by $\Sigma_P$ and $\mathcal{F}_P$ respectively. Note that the dual fan naturally lives in $(\ker \,\pi)^*$ as $\Sigma_P$ can be realized in the space $\ker\,\pi \subset \R^{n-1}$ (uniquely up to translation).

\smallskip

To any collection $\rho_1,\ldots,\rho_k$ of rays in $\R^r$ we associate a so-called {\em chamber fan} as follows. For any subset $I\subset \{1,\ldots,k\}$, we let $C_I$ be the
cone generated by $\{\rho_i\}_{i\in I}$. We define an equivalence relation on the vectors of $\R^r$ as follows:
\begin{equation}\label{eq:eqrel}
 x\sim y\ \text{ if and only if }\ x\in C_I \Leftrightarrow y\in C_I.    
\end{equation}
It is easy to see that this is indeed an equivalence relation and that the equivalence classes are polyhedral cones, called \emph{chambers}. Moreover, the collection of chambers forms a polyhedral fan in $\R^r$.

\begin{definition}\label{def:chamberfan}
{\rm Given a collection $\rho_1,\ldots,\rho_k$ of rays in $\R^r$, its associated chamber fan is a fan whose cones are the equivalence classes arising from the equivalence relation \eqref{eq:eqrel}.
}\end{definition}

\begin{Proposition}\label{fiberfan}
Let $\pi_+:\Delta\to P$ be a projection as before. Let $\rho_1,\ldots,\rho_n \subset (\R^n)^*$ be outward directed normal rays to facets of $\Delta$. Then the secondary fan $\mathcal{F}_P$ is the chamber fan of $i^*(\rho_1),\ldots, i^*(\rho_n)\subset (\ker\,\pi)^*$ where 
$$
i^*:\R^n\to (\ker\,\pi)^*
$$
is the natural projection.
\end{Proposition}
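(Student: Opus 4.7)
The plan is to deduce Proposition~\ref{fiberfan} from Theorem~\ref{triang} by giving two parallel characterizations of the cones of both fans and matching them. By Theorem~\ref{triang}, the cones of $\mathcal{F}_P$ are the equivalence classes on generic $\psi \in (\ker\pi)^*$ under the relation $\psi \sim_{\mathcal{F}} \psi'$ iff $R_\psi(P) = R_{\psi'}(P)$. By Definition~\ref{def:chamberfan}, the cones of the chamber fan are the classes under $\psi \sim_{\text{ch}} \psi'$ iff $\psi$ and $\psi'$ belong to the same subfamily of cones $\{C_I\}_{I \subset [n]}$. Hence it suffices to show that $\sim_{\mathcal{F}}$ and $\sim_{\text{ch}}$ coincide.

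The combinatorial bridge I would establish is: for a generic $\psi$ and an $(m+1)$-subset $\sigma \subset [n]$, the simplex $\pi(\conv(e_i : i \in \sigma))$ is a maximal cell of $R_\psi(P)$ if and only if $\psi$ lies in the interior of the simplicial cone $C_{\sigma^c}$. To prove this, fix $x$ in the relative interior of that simplex and extend $\psi$ arbitrarily to a linear functional $\tilde\psi \in (\R^n)^*$. Then $s_\psi(x)$ is the unique solution of the linear program $\max\,\tilde\psi(y)$ over $y \in \Delta$ with $\pi(y) = x$. The Karush--Kuhn--Tucker conditions at an optimum whose barycentric support is exactly $\sigma$ produce a multiplier $\eta$ for the equality constraint $\pi(y) = x$ (together with the affine constraint defining the hyperplane of $\Delta$) and non-negative multipliers $\mu_i \geq 0$ for the active facet constraints $\{y_i = 0\}_{i \in \sigma^c}$ satisfying
\begin{equation*}
\tilde\psi \;=\; \pi^*\eta \,+\, \sum_{i \in \sigma^c} \mu_i\, \rho_i .
\end{equation*}
Restricting along $i:\ker\pi \hookrightarrow \R^n$ kills the first summand, since $\pi^*\eta$ annihilates $\ker\pi$ (and the contribution from the affine constraint vanishes on $\ker\pi$, which sits in the parallel hyperplane), yielding $\psi = \sum_{i \in \sigma^c} \mu_i\, i^*(\rho_i) \in C_{\sigma^c}$. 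The converse uses KKT sufficiency for this convex LP to recover $s_\psi(x)$ with support $\sigma$ from any such non-negative presentation of $\psi$.

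It then follows that, for generic $\psi$, the triangulation $R_\psi(P)$ equals $\{\sigma : \psi \in \textup{int}(C_{\sigma^c})\}$, so $\sim_{\mathcal{F}}$ is precisely the equivalence recording which maximal simplicial cones $C_{\sigma^c}$ contain $\psi$. Since these cones are spanned by $|\sigma^c| = \dim(\ker\pi)$ rays, in generic position every $C_I$ decomposes as a union of simplicial subcones $C_{\sigma^c}$ with $\sigma^c \subset I$, and $\sim_{\text{ch}}$ is governed by the same data. The correspondence extends to lower-dimensional cones via walls: a generic $\psi$ on a wall of some $C_{\sigma^c}$ produces a $\pi$-regular subdivision obtained from a triangulation by a single bistellar flip, which is precisely the condition for $\psi$ to lie on the corresponding wall of the chamber fan.

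The main obstacle is the KKT step, which requires a careful LP formulation on $\Delta$, absorbing the affine constraint defining $\Delta$ into the equality multiplier and verifying that the dual multipliers of the inequalities $y_i \geq 0$ really match the outward facet normals $\rho_i$ defining the chamber fan. A secondary bookkeeping point is to reconcile the two equivalences at non-generic strata, which amounts to tracking which pairs of simplicial cones $C_{\sigma^c}$ share a given wall of the chamber fan and matching this with the bistellar-flip structure on coarsenings of $\pi$-regular triangulations of $P$.
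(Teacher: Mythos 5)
Your proof is correct in substance, but it takes a genuinely different route from the paper's. The paper argues at the level of the fiber polytope: by \cite{billera1992fiber}, $\mathcal{F}_P$ is the coarsest common refinement of the normal fans of the fibers $\pi_+^{-1}(y)$ over $y\in P$; by genericity every facet of a fiber is the slice of a facet of $\Delta$, so these normal fans have rays among $i^*(\rho_1),\ldots,i^*(\rho_n)$, and every subset of $[n]$ is realized by a face of $\Delta$ meeting some fiber, whence the common refinement is exactly the chamber fan. You instead prove directly, via LP duality/KKT applied to the section $s_\psi$, that for generic $\psi$ the simplex on the vertices indexed by $\sigma$ is a cell of $R_\psi(P)$ if and only if $\psi\in\operatorname{int}\,C_{\sigma^c}$, and then match the two equivalence relations; this amounts to first proving Theorem~\ref{thm:explsymp} (which the paper states without proof) together with the Gale-dual description underlying Theorem~\ref{thgaledual}, and deducing Proposition~\ref{fiberfan} from them. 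Your route is more elementary and self-contained and yields the explicit cell description as a byproduct, at the cost of more bookkeeping; the paper's is shorter because it borrows the common-refinement structure of the fiber fan from \cite{billera1992fiber}. Two remarks: identifying the maximal cones of $\mathcal{F}_P$ with the classes of the relation $R_\psi=R_{\psi'}$ uses the face--subdivision bijection of Theorem~\ref{fiberpolytop}, not only Theorem~\ref{triang}; and the wall/bistellar-flip discussion at the end is unnecessary, since both fans are complete, so agreement of their maximal cones (the chambers) already forces equality of the fans. Finally, your restriction step for the multiplier of the affine constraint is legitimate precisely because the paper's convention places $\ker\pi$ inside the hyperplane parallel to the affine span of $\Delta$, which you correctly flag.
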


\begin{proof}
By the construction of $\Sigma_P$, it is clear that $\mathcal{F}_P$ is the coarsest common refinement of the dual fans of all possible fibers of the projection $\pi_+$. Since $\pi$ is generic, the facets of a fiber of $\pi_+$ come as the intersection of facets of $\Delta$ with fibers of $\pi$.
Also, every facet of $\Delta$ contributes to a facet of some fiber $\pi_+^{-1}(y)$. Therefore, the fan $\mathcal{F}_P$ has $\rho_1,\ldots,\rho_n$ as rays. Moreover, since any $I\subset [n]$ corresponds to a face of $\Delta$ (and hence of $\pi_+^{-1}(y)$ for some $y\in P$), the coarsest common subdivision of the normal fans of fibers of $\pi_+$ is the chamber fan of $\rho_1,\ldots,\rho_n$. \end{proof}

In the rest of this subsection, we list simplices appearing in the triangulation $R_{\psi}(P)$ of $P$ induced by a linear functional $\psi \in (\ker\,\pi)^*$. We also numerate the vertices $v_1,\ldots,v_n$ of $\Delta$ such that $v_i$ is the vertex which is dual to the facet normal to $\rho_i$. Then we have the following theorem:

\begin{Theorem}\label{thm:explsymp}
Let $P$, $\mathcal{F}_P$, $\pi$, and $\psi\in (\ker\,\pi)^*$ be as above. Then the maximal simplices in the triangulation $R_{\psi}(P)$ of $P$ are in bijection with the collections $I=\{i_1,\ldots, i_{n-k}\}$ such that $\psi$ belongs to the cone generated by $i^*(\rho_{i_1}),\ldots, i^*(\rho_{i_{n-k}})$. Under this bijection, the collection $\{i_1,\ldots,i_{n-k}\}$ corresponds to the simplex whose vertices $\pi(v_{j})$ are labeled by $j\in [n]\backslash I$. 
\end{Theorem}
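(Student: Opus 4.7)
The plan is to combine Theorem~\ref{triang}, which characterizes the maximal simplices of $R_\psi(P)$ as images of certain faces of $\Delta$, with Proposition~\ref{fiberfan}, which identifies the secondary fan $\mathcal{F}_P$ with the chamber fan of $i^*(\rho_1),\ldots,i^*(\rho_n)$, and then to bookkeep indices via the duality $v_i \leftrightarrow F_i \leftrightarrow \rho_i$ fixed in the statement.

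First, I would observe that by Theorem~\ref{triang} a maximal simplex of $R_\psi(P)$ has the form $\pi_+(G)$ where $G$ is a face of $\Delta$ containing $s_\psi(x)$ for all $x$ in the relative interior of $\pi_+(G)$. Because $\pi_+$ is generic and $\dim P = m$, the face $G$ must itself be an $m$-simplex of $\Delta$, and hence of the form
\[
G_I \;:=\; \mathrm{conv}\{v_j : j\in [n]\setminus I\} \;=\; \bigcap_{i\in I} F_i,
\]
for some $I\subset[n]$ with $|[n]\setminus I|=m+1$, i.e.\ $|I|=n-k$ (with $k=m+1$). Thus any candidate maximal simplex of $R_\psi(P)$ already has its vertex set of the form $\{\pi(v_j):j\in[n]\setminus I\}$, as claimed.

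Next, I would translate the selection condition $s_\psi(x)\in G_I$ into a condition on $\psi$. For a point $x$ in the interior of $\pi_+(G_I)$ the intersection $\pi^{-1}(x)\cap G_I$ is a single point, which is a vertex $v^\star$ of the fiber polytope $P_x=\pi^{-1}(x)\cap\Delta$; this vertex is the intersection of the facets $F_i\cap\pi^{-1}(x)$ for $i\in I$. These fiber facets live in the affine fiber, whose cotangent space is naturally $(\ker\,\pi)^*$, and by construction their outward normals are precisely the projected rays $i^*(\rho_i)$ for $i\in I$. By the standard normal-cone characterization of linear maximizers on a polytope, $v^\star = P_x^\psi$ if and only if $\psi$ lies in the cone $\mathrm{cone}(i^*(\rho_i):i\in I)$; for a generic $\psi$ in the interior of a maximal chamber this inclusion is strict, ensuring the maximum is unique.

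Finally I would combine these two steps with Proposition~\ref{fiberfan}: the cones of $\mathcal{F}_P$ containing a generic $\psi$ are exactly those of the form $\mathrm{cone}(i^*(\rho_i):i\in I)$ with $|I|=n-k$, and by Theorem~\ref{triang} the triangulation $R_\psi(P)$ is constant on the open maximal cone of $\mathcal{F}_P$ containing $\psi$. Hence the subsets $I$ with $\psi\in\mathrm{cone}(i^*(\rho_i):i\in I)$ are in bijection with the maximal simplices of $R_\psi(P)$, each $I$ corresponding to $\pi_+(G_I)$ with vertex set $\{\pi(v_j):j\in[n]\setminus I\}$. The main subtlety is verifying that the projection $i^*$ sends the outward facet normals of $\Delta$ to the outward facet normals of the fiber $P_x$ at the chosen vertex (so that the normal-cone argument applies as stated), and that genericity of $\psi$ upgrades the bijection from cones-containing-$\psi$ to open-cones-containing-$\psi$; everything else is index bookkeeping.
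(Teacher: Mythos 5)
Your argument is correct, and it is worth noting that the paper itself states Theorem~\ref{thm:explsymp} without proof, presenting it as a consequence of Theorem~\ref{triang} and Proposition~\ref{fiberfan}; what you have written is precisely the natural completion of that intended route, so there is no real divergence in method. Your key step --- identifying, for $x$ in the interior of $\pi_+(G_I)$, the unique point $v^\star=\pi^{-1}(x)\cap G_I$ as a vertex of the fiber $P_x$ whose active facets are exactly $F_i\cap\pi^{-1}(x)$, $i\in I$, so that $s_\psi(x)=v^\star$ if and only if $\psi\in\mathrm{cone}(i^*(\rho_i):i\in I)$ --- is exactly the normal-cone mechanism underlying the paper's Definition~\ref{height} and the proofs of Theorem~\ref{triang} and Proposition~\ref{fiberfan}, and it gives both directions of the bijection.

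Two minor points to tighten. First, the genericity you need (that every $(m{+}1)$-subset of vertices of $P$ is affinely independent, so that $\pi_+$ restricted to each $m$-face $G_I$ is injective and the active set at $v^\star$ is exactly $I$) is slightly stronger than the paper's stated notion of a generic projection, which only requires distinct vertices to have distinct images; the paper tacitly assumes this stronger general position (as it must for Theorem~\ref{thgaledual} as well), so you should state it rather than attribute it to ``$\pi_+$ generic''. Second, in your closing paragraph the phrase ``the cones of $\mathcal{F}_P$ containing a generic $\psi$ are exactly those of the form $\mathrm{cone}(i^*(\rho_i):i\in I)$'' conflates the chamber fan with the cones spanned by subsets of the rays: the cones $C_I=\mathrm{cone}(i^*(\rho_i):i\in I)$ are unions of chambers, not cones of $\mathcal{F}_P$. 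The correct statement, which is all you use, is that the set of $I$ with $\psi\in C_I$ is constant as $\psi$ varies in the open chamber of $\mathcal{F}_P$ containing it; with that rephrasing the proof is complete.
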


\subsection{Gale duality}\label{sec:galeduality}
The {\it Gale transform} is an involution that takes a (reasonably general) set $\Gamma$ of $n$ labeled points in $\R^m$ to a set $\Gamma^\perp$ of $n$ labeled points in $\R^{n-m}$, defined up to a linear transformation of $\R^{n-m}$.
More precisely, if we choose our coordinates so that the points in $\Gamma$ have as coordinates the columns of the matrix
\begin{equation}\label{mat}
M=\left(\frac{\mathbb{I}_{m}}{A}\right)^T,
\end{equation}
where $\mathbb{I}_{m}$ is the $m\times m$ identity matrix and $A$ is an $(n-m)\times m$ matrix, then the Gale transform of $\Gamma$ is the set of points $\Gamma^{\perp}$ whose coordinates in $\R^{n-m}$ are the rows of the matrix 
\begin{equation}\label{matperp}
M^\perp=\left(\frac{-A^{T}}{\mathbb{I}_{n-m}}\right).
\end{equation}
Note that $MM^\perp=0$. Given a polytope $P$ in the projective space $\mathbb{P}^{m}$ with the vertex set $V(P)=\{v_1,\ldots,v_n\}$, we denote  $(V(P))^\perp=\{v_1^\perp,\ldots,v_n^\perp\} \subset \mathbb{P}^{(n-m-1)}$ for the Gale transform of $V(P)$.

\medskip

We now draw the connection between the Gale transform and regular triangulations of polytopes. 

\begin{Theorem} \label{thgaledual}
Let $P$ be a polytope with the vertex set $V(P)=\{v_1,\ldots,v_n\}\subset \R^m$. Then $(V(P))^\perp$ is the set of linear generators of the rays of $\mathcal{F}_P$. More precisely, a maximal cone $\sigma$ of $\mathcal{F}_P$ corresponds to a triangulation of $P$ whose maximal simplices ${\rm conv}(v_{i_1},\ldots,v_{i_k})$ are in correspondence with the $k$-subsets $I=\{i_1,\ldots,i_k\}$ of $[n]$ where the cone generated by $\{v_j^\perp\}_{j\in [n]\backslash I}$ contains $\sigma$.
\end{Theorem}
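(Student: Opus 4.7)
The strategy is to reduce the claim to a direct application of Proposition~\ref{fiberfan} and Theorem~\ref{thm:explsymp}, once the Gale dual vectors $v_i^\perp$ are identified with the (projected) facet normals $i^*(\rho_i)$ of the standard simplex. First I realize $P$ as the image of $\Delta$ (suitably translated into a linear subspace parallel to its affine hull, as in the discussion preceding Proposition~\ref{fiberfan}) under a linear projection $\pi$ sending the $i$-th vertex of $\Delta$ to $v_i\in\R^m$. Encoding $\pi$ by the matrix $M$ whose columns are the (lifted) $v_i$, the kernel $\ker\pi$ has dimension matching the ambient dimension of the Gale dual, and by \eqref{matperp} the vectors $v_1^\perp,\ldots,v_n^\perp$ are exactly the rows of a matrix $K$ whose columns form a basis of $\ker M$.

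Using $K$ to identify $(\ker\pi)^*$ with the ambient space of the Gale dual, a short linear-algebra check shows that the restriction map $i^*:(\R^n)^*\to(\ker\pi)^*$ sends the $i$-th coordinate functional $e_i^*$ to the $i$-th row of $K$, i.e.~$i^*(e_i^*)=v_i^\perp$. Since the outward facet normals of $\Delta$ are (up to sign) $\rho_i=\pm e_i^*$, this yields the equality $\{i^*(\rho_i)\}_{i}=\{v_i^\perp\}_{i}$ as sets of rays. Combining this identification with Proposition~\ref{fiberfan}, which says that $\mathcal{F}_P$ is the chamber fan of $\{i^*(\rho_i)\}$, immediately establishes the first claim: the $v_i^\perp$ generate the rays of $\mathcal{F}_P$.

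For the second claim, pick $\psi$ generic in the interior of a maximal cone $\sigma$ of $\mathcal{F}_P$. By Theorem~\ref{thm:explsymp}, the maximal simplices of $R_\psi(P)$ are in bijection with subsets $I'\subset[n]$ of size $n-k$ satisfying $\psi\in\mathrm{cone}(\{i^*(\rho_j)\}_{j\in I'})$, and the simplex labelled by $I'$ has vertex set $\{v_j\}_{j\in[n]\setminus I'}$. Re-indexing by the complement $I=[n]\setminus I'$ (a $k$-subset) and substituting $i^*(\rho_j)=v_j^\perp$ translates this directly into the statement of the theorem, namely that $I=\{i_1,\ldots,i_k\}$ labels the simplex $\mathrm{conv}(v_{i_1},\ldots,v_{i_k})$ of the triangulation attached to $\sigma$ precisely when $\sigma\subset\mathrm{cone}(\{v_j^\perp\}_{j\notin I})$.

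The only non-routine aspect I foresee is bookkeeping: verifying the sign conventions for the $\rho_i$ and the affine-versus-linear normalization of $\Delta$ so that the identification $i^*(e_i^*)=v_i^\perp$ holds on the nose. Since the Gale dual is only defined up to a linear isomorphism and the chamber fan is insensitive to such an isomorphism (and to sign flips of its rays), this is essentially cosmetic, and I do not anticipate any further obstacle.
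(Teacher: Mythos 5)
Your proposal is correct and follows essentially the same route as the paper: realize $P$ as a projection of the standard simplex, observe that in the coordinates of \eqref{mat} the projection $(\R^n)^*\to(\ker\,\pi)^*$ is given by $M^\perp$ so that the projected facet normals of $\Delta$ are exactly the Gale dual points $v_i^\perp$, and conclude via Proposition~\ref{fiberfan}. The only difference is that you explicitly invoke Theorem~\ref{thm:explsymp} for the labeling of maximal simplices by $k$-subsets, which the paper leaves implicit in "the rest follows," and your sign/normalization caveat is harmless since all the $\rho_i$ flip simultaneously, which is just the linear isomorphism $-\mathrm{id}$ under which the Gale transform and the chamber fan are defined anyway.
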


\begin{proof}
Let $\pi:\R^n\to \R^m$ be a linear map given by a projection $\pi_+:\Delta\to P$, where $\Delta$ is the standard simplex with $n$ vertices.
If we choose the coordinates such that the coordinates of the vertices of $P$ form a matrix $M$ as in \eqref{mat}, then the corresponding matrix 
of the natural projection 
$(\R^n)^*\to (\ker\,\pi)^*$
will be given by $M^\perp$ as in \eqref{matperp}. The rest follows from Proposition~\ref{fiberfan}. 
\end{proof}

\subsection{Projective polytopes}
\begin{definition}\label{def:projpol}
{\rm A projective polytope is a subset $P\subset \mathbb{P}^{d}$ of a real projective space, which is homeomorphic to a closed $d$-dim ball and is bounded by projective hyperplanes.
}\end{definition}
A projective polytope $P\subset\mathbb{P}^{d}$ is uniquely determined by the pointed cone $C(P)\subset\mathbb{R}^{d+1}$ over $P$. Conversely, the projectivization
of any full-dimensional pointed cone $\sigma\subset \R^{d+1}$, i.e.~the image of  $\sigma$ in $\mathbb{P}^d$ under the natural projection $\R^{d+1}\setminus \{0\} \to \mathbb{P}^d$ is a projective polytope. 

\smallskip

The construction of the secondary fans of polytopes can be extended to {\it projective polytopes} as follows.
Let $P\subset\mathbb{P}^{d}$ be a projective polytope and $H\subset \P^d$ a hyperplane such that $P\cap H=\varnothing$. Then $P$ is contained in  the affine chart $\P^d\backslash H\simeq \R^d$ and can be viewed as a convex polytope in $\R^d$. 
On the other hand, the cone $C(P) \subset \R^{d+1}$ corresponds to the choice of a linear function $L\in (\R^{d+1})^*$ which is positive on $C(P)\backslash \{0\}$. Then the affine chart $\P^d\backslash H$ can be identified with the level set $L_1=\{x \in \R^{d+1} \,|\, L(x)=1\}$ and the aforementioned convex polytope in this chart is given by the intersection $C(P)\cap L_1$. By the secondary fan of a projective polytope we mean the secondary fan of its affine realization. We note that the secondary fan of a projective polytope depends on the choice of the hyperplane $H$ at infinity (resp. the linear function $L$), but its combinatorial structure is independent of $H$ as any pair of different affine realizations of $P$ can be obtained from each other by a linear transformation.

\section{The amplituhedron}\label{sec:amplituhedron}
We now introduce the main objects of this paper, including \emph{positive Grassmannians} and \emph{amplituhedra}. We also review the notions of \emph{positroidal triangulations} and \emph{canonical forms}, and their interrelations.

%\vspace{-2mm}
\subsection{The positive Grassmannian} \label{subsec:positiveGrass}
Throughout we fix a field $\mathbb{K}$ with char$(\mathbb{K})=0$. We are mainly interested in the case of $\mathbb{K}=\mathbb{R}$ or $\mathbb{C}$. 
The Grassmannian $\Gr(k, n)$ is the space of all $k$-dim linear subspaces of $\mathbb{K}^n$. For an abstract vector space $W$ over $\mathbb{K}$ we will denote by $\Gr(k, W)$ the Grassmannian of $k$-dim linear subspaces of $W$. To emphasize the underlying field, when $\mathbb{K}$ is the set of real or complex numbers, we may use $\Gr(k,\mathbb{R}^n)$ or $\Gr(k,\mathbb{C}^n)$ instead of $\Gr(k,n)$.  A point $V$ in $\Gr(k, n)$ can be represented by a $k\times n$ matrix with entries in $\mathbb{K}$. 
Let $X=(x_{ij})$ be a $k\times n$ matrix of indeterminates. For a subset $I = \{i_1,\ldots,i_k\} \in \binom{[n]}{k}$,  let $X_I$ denote the $k\times k$ submatrix of $X$ with the column indices $i_1,\ldots,i_k$. 
The \emph{Pl\"ucker coordinates} of $V$ are $p_I(V) = \text{det}(X_I)$ for $I \in\binom{[n]}{k}$ that do not depend on the choice of matrix $X$ (up to simultaneous rescaling by a non-zero constant) and determine the \emph{Pl\"ucker embedding} of
$\Gr(k,n)$ into $\mathbb{P}^{\binom{n}{k}-1}$. 
Moreover, any point $V\in\Gr(k,n)$ can be represented as $\mbox{span}\lbrace v_1,\ldots,v_k\rbrace$ for some $\mathbb{K}$-vector space basis $\{v_1,\dots,v_k\}$.

We denote the set of full-rank $k\times n$ matrices by $\mbox{Mat}(k,n)$. Let $\mathbb{K}=\mathbb{R}$. The set of totally positive matrices $\mbox{Mat}_+(k,n)$ contains the $k\times n$ matrices whose ordered maximal 
minors are all positive. The totally positive part of the Grassmannian $\Gr_+(k,n)$ 
is the subset of
$\Gr(k,n)$ where all Pl\"ucker coordinates are positive. Similarly, we denote $\Gr_{\geq}(k,n)$ for the totally non-negative part of the Grassmannian.
\medskip

The totally non-negative Grassmannian $\Gr_{\geq}(k,n)$ has a stratification as follows:
\begin{definition}
{\rm For each collection $M$ of elements of $ \binom{[n]}{k}$, we define $S_{M}$ as
\[
S_M=\{V\in \Gr_{\geq}(k,n):\ p_I(V)>0\text{ if and only if } I\in M \}.
\]
If $S_M\neq\emptyset$, then we call $M$ a \emph{positroid} and $S_M$ a \emph{positroid cell}. 
}\end{definition}
Each positroid cell $S_{M}$ is a topological cell \cite[Theorem 6.5]{postnikov2006total}, and moreover, the positroid cells of $\Gr_{\geq}(k,n)$ glue together to form a CW complex \cite{PSW}.
\begin{remark}\label{rem:boundary}
By definition, $\Gr_{\geq}(k,n)$ is a semialgebraic subset of the real part 
    of the complex algebraic variety $\Gr(k,\mathbb{C}^n)$. 
The boundary of $\Gr_{\geq}(k,n)$ in the analytic topology is $\Gr_{\geq}(k,n) \backslash \Gr_{+}(k,n)$. 
The boundary of $\Gr_{\geq}(k,n)$ is covered by $n$ hypersurfaces given by the equations $p_{I_j}=0$ for $j=1,\ldots, n$, where $I_j$ is the $j^{\rm th}$ cyclic $k$-interval of $[n]$. We will denote $S_j$ for the positroid cell defined by $p_{I_j}=0$.
\end{remark}

%\vspace{-4mm}

\subsection{The amplituhedron} \label{subsec:ampl}
The tree amplituhedron is defined by Arkani-Hamed and Trnka in \cite{arkani2014amplituhedron} as the image of the positive Grassmannian under a map induced by a totally positive matrix.

\begin{definition}\label{def:amp}
{\rm Any totally positive matrix $Z$ in 
$\Mat_+(k+m,n)$ with $k\leq k+m \leq n$ induces a map
\begin{eqnarray}\label{eq:A_{n,k,m}}
\pi_+:\Gr_+(k,n) \to \Gr(k,k+m)
\quad\text{defined by}\quad
\pi_+({\rm span}\lbrace v_1,\ldots,v_k\rbrace) := {\rm span}\lbrace Z\cdot v_1,\dots, Z\cdot v_k\rbrace, 
\end{eqnarray}
where ${\rm span}\lbrace v_1,\ldots,v_k\rbrace$ is an element of
$\Gr_+(k,n)$ written as the span of some $\mathbb{R}$-vector basis.
The \emph{(tree) amplituhedron} $\mathcal{A}_{n,k,m}(Z)$ is defined to be the image
$\pi_+(\Gr_+(k,n))$ inside $\Gr(k,k+m)$.
}\end{definition}

It is shown in \cite[\S4]{arkani2014amplituhedron} that the map $\pi_+$ is well-defined. In special cases the amplituhedron recovers familiar objects as follows.
If $k+m=n$, then $\mathcal{A}_{n,k,m}$ is isomorphic to
the positive Grassmannian $\Gr_+(k,k+m)$.
If $k=1$, then 
$\mathcal{A}_{n,1,m}$ is a {\it cyclic polytope} in the projective space $\mathbb{P}^m$, see \cite{sturmfels1988totally}.
If $m=1$, then $\mathcal{A}_{n,k,1}$ is the complex of 
bounded faces of a cyclic hyperplane arrangement, see \cite{karp2019amplituhedron}.

\begin{remark}\label{rem:Z}
Note that the definition of the amplituhedron depends on the choice of $Z$; however, its combinatorial and geometric properties are conjectured to be independent of the choice of $Z$.  For example, it is expected that
	the set of triangulations of the amplituhedron (in Definition~\ref{def:triangulations}) is independent of $Z$ (at least
	for even $m$, see e.g.~\cite{lukowski2020positive} for the $m=2$ case). Moreover, all the results proved
	in this paper are independent of the choice of $Z$. 
	Therefore, we neglect the matrix $Z$ in the definition of the amplituhedron, dissections, and triangulations, and we refer to the amplituhedron 
	as $\mathcal{A}_{n,k,m}$.  
\end{remark}

%\vspace{-4mm}
\subsection{Triangulations of the amplituhedron}\label{subsec:ampltriang}
We recall the definitions of positroidal dissections and triangulations of amplituhedra from \cite{lukowski2020positive}.
\begin{definition}\label{def:triangulations}
{\rm Let $\mathcal{C} = \{S_{M}\}$ be a finite collection 
of positroid cells of $\Gr_+(k,n)$.  We say that 
$\mathcal{C}$ is 
a \emph{(positroidal) dissection} of $\mathcal{A}_{n,k,m}$ if (for any choice of the initial data $Z$), we have that:
\begin{itemize}
	\item $\dim \, \overline{\pi_+(S_{M})}  = m\cdot k$ for each $S_M$ in $\mathcal{C}$.
	\item The images $\pi_+(S_{M})$ and $\pi_+(S_{M'})$ 
		of two distinct cells in $\mathcal{C}$ are disjoint.
	\item
	$\bigcup\overline{\pi_+(S_{M})}= \mathcal{A}_{n,k,m}$, i.e.~the union of the images of the cells in $\mathcal{C}$ is dense in $\mathcal{A}_{n,k,m}$.
	\end{itemize}
We denote $\overline{\pi_+(S_{M})}$ for the closure in the analytic topology. A positroidal dissection $\mathcal{C} = \{S_M\}$ of $\mathcal{A}_{n,k,m}$ is a \emph{triangulation} if $\pi_+$ is injective on each $S_{M}$. In this case, each $S_M$ is called a \emph{generalized triangle}.
}\end{definition}
With a slight abuse of notation, we will call 
$\overline{\pi_+(S_{M})}$ a generalized triangle, and denote it as $\Delta_M$.
Moreover, we will sometimes refer to the collection $\{\Delta_M \}$ itself as a positroidal triangulation (dissection).

\medskip\noindent{\bf Good dissections.} Among all possible positroidal dissections, there are some with particularly nice features, called \emph{good dissections}, for which the boundaries of the strata interact nicely. In particular, in addition to properties in Definition~\ref{def:triangulations}, they satisfy the following condition:
\begin{itemize}
    \item if the intersection $\overline{\pi_+(S_M)} \cap \overline{\pi_+(S_{M'})} $ of the images of two distinct cells has codimension one, then $\overline{\pi_+(S_M)} \cap \overline{\pi_+(S_{M'})}$ equals $\overline{\pi_+(S_{M''})}$, where $S_{M''}$ lies in the closure of both $S_M$
and $S_{M'}$.
\end{itemize}
We will sometimes refer to good dissections as \emph{positroidal subdivisions}. This is motivated by the connection between good dissections of certain type of amplituhedra with actual subdivisions of polytopes.

\subsection{Canonical forms}\label{sec:canform}
The geometry of the positive Grassmannian and amplituhedron can be supplemented with certain differential forms called \emph{canonical} forms. These forms are uniquely\footnote{The uniqueness of such form on a variety with boundary divisors with these properties 
is 
non-trivial. See \cite[\S 4]{Brown:2018omk}.}
defined by their property of having simple (logarithmic) poles along the boundary of the space they are associated to.
They were first introduced for the positive Grassmannians, and then were expanded for amplituhedra (see, e.g.~\cite{Arkani-Hamed:2016byb} and references therein).
In particular, Arkani-Hamed and Trnka in \cite{arkani2014amplituhedron} showed how such forms algebraically encode scattering amplitudes in $\mathcal{N}=4$ super Yang-Mills theory. 

\medskip

We now briefly recall the definition of \emph{residue}. We refer to \cite{resdua} as a standard literature as well as \cite{Griffiths}, \cite[\S2.2]{Khesbook} for more introductory texts.

\begin{definition}\label{def:residue}
{\rm Let $Y \subset X$ be an irreducible subvariety of codimension $k$, and $\bf{\Omega}$ a rational form
on $X$.
Let $\alpha_1, \alpha_2, \ldots, \alpha_d$ be local coordinates of $X$ and let $Y$ be locally determined by the vanishing locus $\alpha_1=\cdots=\alpha_k=0$. Further, suppose that
$\mathbf{\Omega}=\frac{\dd\!\alpha_1}{\alpha_1}\wedge\cdots\wedge\frac{\dd\!\alpha_k}{\alpha_k}\wedge \mathbf{\Omega}'$
with $\mathbf{\Omega}'$ of the form $f(\alpha_1, \alpha_2,\ldots, \alpha_d) \dd\!\alpha_{k+1} \wedge \dd\!\alpha_{k+2}\wedge \cdots\wedge \dd\!\alpha_d$ for some rational function $f$ which is locally analytic on $Y$. Then the \emph{residue} of $\mathbf{\Omega}$ along $Y$ is defined as ${\rm Res}_Y {\bf \Omega} := {\bf{\Omega}'}|_Y$. Note that the ${\rm Res}_Y \mathbf\Omega$ is only defined up to sign, which depends on the order of the functions $\alpha_1,\ldots,\alpha_k$.
}\end{definition}

\subsubsection{Canonical form of the positive Grassmannian}
Recall from Remark~\ref{rem:boundary} that the boundary of $\Gr_{\geq}(k,n)$ in the analytic topology is $\Gr_{\geq}(k,n) \backslash \Gr_{+}(k,n)$. Following \cite{lam2014totally, Arkani-Hamed:2016byb}, let $D$ be the Zariski closure of $\Gr_{\geq}(k,n) \backslash \Gr_+(k,n)$ in $\Gr(k,\mathbb{C}^n)$. 
Note that each irreducible component $D_i$ of $D$ has a natural semialgebraic subset ${D_i}_+$ which is the intersection of its real locus ${D_i}_\mathbb{R}$ with $\Gr_+(k,n)$.
Then there exists a unique, up to scaling, top-degree rational form $\mathbf\Omega(\Gr_+(k,n))$ on $\Gr(k,\C^n)$ with simple poles along $D$.
Similarly, the residue of $\mathbf\Omega(\Gr_+(k,n))$ along $D_i$ is the unique top-degree rational differential form on ${D_i}_+$ with simple poles along the boundaries of ${D_i}_+$. 
In general, each positroid cell $S_M$ carries a similar structure, i.e.~each $S_M$ admits a unique,  up to scaling, top-degree rational form $\mathbf\Omega(S_M)$ on its complexification with simple poles along its boundary. We call $\mathbf\Omega(S_M)$ the \emph{canonical form} of $S_M$. As before, for any positroid cell $S_{M'}$ in the boundary of $S_{M}$ (which is of codimension one), the canonical form $\mathbf\Omega(S_{M'})$ can be obtained as a residue of $\mathbf\Omega(S_M)$ along the complexification of $S_{M'}$, that is $\mbox{Res}_{S_{M'}}(\mathbf{\Omega}(S_M))=\mathbf{\Omega}(S_{M'})$. We note that the canonical form of each positroid cell can be computed from $\mathbf\Omega(\Gr_+(k,n))$ by successively taking residues along positroid boundaries. More generally, complex varieties with such structures are called \emph{positive geometries}. See \S\ref{sec:fiberposgeom} and \cite{positivegeom} for an overview of the latter.

\smallskip

We now introduce the standard volume form on the Grassmannian as follows.

\begin{definition}\label{def:formbase}
{\rm Let $C$ be a generic matrix of indeterminates in  $\Gr_+(k,n)$. The \emph{standard volume form} of $\Gr(k,n)$ is defined as \footnote{it is a covariant form of weight $n$; See \S\ref{sec:formfibers} and in particular, Definition~\ref{page:covariant}.}
\begin{equation*}
    \mu_{\Gr(k,n)}(C):= \bigwedge_{\alpha=1}^k \langle C {\rm d}^{n-k} C_{\alpha}\rangle, \quad\text{where}\quad  \langle C \dd^{n-k} C_\alpha \rangle := \sum_{I \in \binom{[n]}{k}} \epsilon_{I \bar{I}} \, p_I(C) \bigwedge_{\bar{i} \in \bar{I}} \dd C_{\alpha \bar{i}}.
\end{equation*}
Here $C_\alpha=(c_{\alpha 1},\ldots,c_{\alpha n})$ is the $\alpha^{\rm th}$ row of $C$ and $\epsilon_{j_1, \ldots, j_n}$ 
is the totally anti-symmetric Levi-Civita  symbol. That is $\epsilon_{j_1, \ldots, j_n}$ equals to the sign of the permutation $(1, \ldots, n) \rightarrow (j_1, \ldots, j_n)$. Note that if $j_s=j_r$ for some $s$ and $r$, then $\epsilon_{j_1, \ldots, j_n}$ is defined to be zero.
}\end{definition}

\begin{remark}
Let $C$ be a generic matrix of indeterminates in  $\Gr_+(k,n)$. Then following \cite{Arkani-Hamed:2016byb,lam2014totally}, we can {\em explicitly} write the canonical form of $\Gr_+(k,n)$ as follows:
\begin{equation*}\label{cangras}
    \mathbf{\Omega}\left( \Gr_+(k,n) \right)=\frac{\mu_{\Gr(k,n)}(C)}{\prod_{j=1}^n p_{I_j}(C)}.
\end{equation*}
Note that the formula above is invariant under $\GL_k$-action, hence the form is well-defined on $\Gr(k,n)$.
\end{remark}
%\vspace{-4mm}

\subsubsection{Canonical form of the amplituhedron} 
The amplituhedron $\mathcal{A}_{n,k,m}$ is conjectured to be a positive geometry \cite{arkani2014amplituhedron, lam2014totally}. In particular, one can obtain its canonical form $\mathbf{\Omega}\left( \mathcal{A}_{n,k,m} \right)$ by knowing its positroidal triangulations.

\begin{Conjecture}[\cite{arkani2014amplituhedron, lam2014totally}]\label{conj:canformfromtriang}
Let $\mathcal{C}=\lbrace S_{M}\rbrace$ be a positroidal triangulation of $\mathcal{A}_{n,k,m}$. Then
\begin{equation}\label{eq:AHformTriang}
 \mathbf{\Omega}\left(\mathcal{A}_{n,k,m} \right)=\sum_{S_M\in\mathcal{C}}   \mathbf{\Omega}\left(\pi_+(S_{M}) \right).
\end{equation}
In particular, the sum  does not depend on the positroidal triangulation $\mathcal{C}$.
\end{Conjecture}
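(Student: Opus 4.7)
The plan is to reduce this statement to the uniqueness of the canonical form on $\mathcal{A}_{n,k,m}$ (which is part of the assertion that $\mathcal{A}_{n,k,m}$ is a positive geometry). Since a canonical form is characterized by its simple logarithmic singularities along the boundary, it suffices to show that
\[
\Psi \;:=\; \sum_{S_M \in \mathcal{C}} \mathbf{\Omega}(\pi_+(S_M))
\]
has simple poles only along the Zariski closure of $\partial \mathcal{A}_{n,k,m}$ and with the same residues as $\mathbf{\Omega}(\mathcal{A}_{n,k,m})$. Independence of $\mathcal{C}$ is then an automatic byproduct of such a uniqueness statement.

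The first concrete step I would take is to invoke Theorem~\ref{thm:resfiber}: to each generalized triangle $S_M$ the fiber volume form $\omega_{n,k,m}(Y)$ associates a pole $q_M$, and
\[
\mathbf{\Omega}(\mathcal{A}_{n,k,m})(Y) \;=\; \mu_{\Gr(k,k+m)} \cdot \sum_{S_M \in \mathcal{C}} {\rm Res}_{q_M}\, \omega_{n,k,m}(Y).
\]
It then suffices to prove the cell-wise identity $\mathbf{\Omega}(\pi_+(S_M))(Y) \;=\; \mu_{\Gr(k,k+m)} \cdot {\rm Res}_{q_M}\, \omega_{n,k,m}(Y)$ for every $M$. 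For this I would use the birational fiber parametrization of Proposition~\ref{prop:fiber} together with the description of the fiber boundaries in Proposition~\ref{prop:boundaries}: the boundaries of $\pi_+(S_M)$ inside $\mathcal{A}_{n,k,m}$ correspond exactly to the positroid walls of $S_M$, and these match the hypersurfaces carrying the poles of $\omega_{n,k,m}$ restricted to the fiber. Performing the fiber residue at $q_M$ then produces a top-degree form on $\pi_+(S_M)$ with simple poles precisely along $\partial \pi_+(S_M)$, which (since $\pi_+$ is a birational isomorphism from $S_M$ onto its image for a generalized triangle) must coincide, up to normalization, with $\mathbf{\Omega}(\pi_+(S_M))$ by the uniqueness of the canonical form of the positive geometry $\pi_+(S_M)$.

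The remaining, and most delicate, point is \emph{internal pole cancellation}. Whenever two generalized triangles $\pi_+(S_M)$ and $\pi_+(S_{M'})$ share a codimension-one wall $B$ inside $\mathcal{A}_{n,k,m}$, both $\mathbf{\Omega}(\pi_+(S_M))$ and $\mathbf{\Omega}(\pi_+(S_{M'}))$ develop a simple pole there, and one must show the two residues cancel. In the fiber picture this translates to a pairwise cancellation between ${\rm Res}_{q_M}\,\omega_{n,k,m}$ and ${\rm Res}_{q_{M'}}\,\omega_{n,k,m}$ on their common fiber pole. The hard part will be the orientation argument in full generality: one needs to verify that $S_M$ and $S_{M'}$ lie on opposite sides of the common positroid wall and induce opposite co-orientations under $\pi_+$. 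For \emph{good dissections} this follows from the codimension-one matching condition recalled in \S\ref{subsec:ampltriang}, and in the linear cases of cyclic polytopes and conjugate-to-polytopes the Jeffrey--Kirwan machinery of \S\ref{sec:JK} automates the cancellation via Gale duality. Outside these regimes, the orientation-matching of boundary positroids is precisely where the argument remains genuinely conjectural, and where progress would have to be made to obtain a proof beyond the cases treated in the paper.
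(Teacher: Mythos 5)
There is a fundamental mismatch here: the statement you are addressing is stated in the paper as a \emph{Conjecture} (attributed to \cite{arkani2014amplituhedron, lam2014totally}), and the paper gives no proof of it --- it is an input, not a result. Your proposal, moreover, is circular with respect to the paper's logical structure. You propose to deduce \eqref{eq:AHformTriang} from Theorem~\ref{thm:resfiber}, but in the paper part (ii) of Theorem~\ref{thm:resfiber} is proved by \emph{starting from} \eqref{eq:AHformTriang}, i.e.\ from the very identity you want to establish, and its part (i) additionally relies on Conjecture~\ref{conjtriangles} for the birationality of $\pi$ on $\overline{S_M}$. Similarly, your opening reduction to ``uniqueness of the canonical form on $\mathcal{A}_{n,k,m}$'' presupposes that $\mathcal{A}_{n,k,m}$ is a positive geometry, which the paper explicitly flags as conjectural. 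So none of the ingredients you lean on is available unconditionally, and the cell-wise identity $\mathbf{\Omega}(\pi_+(S_M)) = \mu_{\Gr(k,k+m)}\cdot \mathrm{Res}_{q_M}\,\omega_{n,k,m}$ cannot be extracted from the paper without assuming the conjecture (or Conjecture~\ref{conjtriangles}) first.

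Your diagnosis of the genuinely hard content is nevertheless accurate: the matching and cancellation of residues along internal codimension-one walls, with the correct relative orientations of adjacent generalized triangles, is exactly what is missing in general, and it is settled in the paper only in the linear regimes (cyclic polytopes, conjugate to polytopes, via the Jeffrey--Kirwan and Gale-duality machinery of \S\ref{sec:JK}, conditional on Conjecture~\ref{con:main}) rather than for arbitrary $\mathcal{A}_{n,k,m}$. As written, then, your text is a reduction sketch that correctly locates the open point but does not constitute a proof, and it should not present Theorem~\ref{thm:resfiber} or the positive-geometry property of the amplituhedron as independent facts from which the conjecture follows.
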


\begin{remark}\label{rk:orientation2}
As explained in \cite[\S8]{galashin2018parity}, the signs of $\mathbf{\Omega}\left( \pi_+(S_{M}) \right)$ in the right hand side of \eqref{eq:AHformTriang} are chosen according to a fixed orientation of $\Gr(k,k+m)$. More concretely, let us fix a top-degree form on $\Gr(k,k+m)$ which is non-vanishing on $\mathcal{A}_{n,k,m}$. As an example of such form we can take $\mu_{\Gr(k,k+m)}$ from Definition~\ref{def:formbase}. Then the sign of $\mathbf{\Omega}\left( \pi_+(S_{M}) \right)$ is chosen such that 
$$
\frac{\mathbf{\Omega}\left( \pi_+(S_{M}) \right)(Y)}{\mu_{\Gr(k,k+m)}(Y)}>0, \text{ for any } Y\in \pi_+(S_{M}).
$$
\end{remark}

\begin{remark} 
We can compute $\mathbf{\Omega}\left(\mathcal{A}_{n,k,m} \right)$ from $\mathbf{\Omega}\left(\Gr_+(k,n) \right)$ by knowing a positroidal triangulation of $\mathcal{A}_{n,k,m}$.
More precisely, the form $\mathbf{\Omega}\left( \pi_+(S_{M})\right)$ in \eqref{eq:AHformTriang} can be obtained from $\mathbf{\Omega}\left(S_{M} \right)$ using the diffeomorphism $\pi_+$ between the positroid cell $S_{M}$ and its image in $\mathcal{A}_{n,k,m}$ (see Conjecture~\ref{conjtriangles}). In turn, as described before, the canonical form of a  positroid cell 
$S_{M}$ can be obtained as: $\mathbf{\Omega}\left( S_M \right)=\mbox{Res}_{S_M}\big( \mathbf{\Omega}\left(\Gr_+(k,n)\right)\big).$
\end{remark}

\noindent Since $\mathbf{\Omega}\left( \mathcal{A}_{n,k,m} \right)$ is a top-degree form on $\Gr(k,k+m)$, it can be decomposed into a product of a covariant function and
a covariant top-degree form. This leads to the following definition; See also Definition~\ref{page:covariant}.
\vspace{-3mm}
\begin{definition}[\cite{arkani2014amplituhedron}]\label{def:canonical_form}
{\rm Let $ \mathbf{\Omega}(\mathcal{A}_{n,k,m})$ be the canonical form of the amplituhedron $\mathcal{A}_{n,k,m}$ and let $\Omega (\mathcal{A}_{n,k,m})$ be a covariant function of weight $-(m+k)$ on $\Gr(k,k+m)$ such that
\begin{equation}\label{volumefunc}
    \mathbf{\Omega}(\mathcal{A}_{n,k,m})=\Omega (\mathcal{A}_{n,k,m}) \, \mu_{\Gr(k,k+m)}.
\end{equation}
Then we call $\Omega(\mathcal{A}_{n,k,m})$ the \emph{canonical function} of the amplituhedron $\mathcal{A}_{n,k,m}$.
}\end{definition}

\vspace{-4mm}

\subsection{Parity Duality} \label{subsec:paritydual}
Throughout we fix an even $m$ and we set $\ell:=n-m-k$.
We aim to review how the amplituhedra obtained by exchanging $k$ and $\ell$ are related by the so-called \emph{parity duality}. 
The definition of parity duality was originally inspired by the physical operation of parity conjugation in quantum field theory. More specifically, in the context of scattering amplitudes in $\mathcal{N} = 4$ super Yang-Mills theory  \cite{arkani2014amplituhedron}, where amplitudes can be computed from the geometry of $\mathcal{A}_{n,k,4}$. 
Furthermore, the conjectural formula given in \cite{Karp:2017ouj} for the number of cells in each triangulation of the amplituhedron is invariant under the operation of swapping the parameters $k$ and $\ell$, which has motivated further studies \cite{ferro2018amplituhedron,galashin2018parity}. 

\begin{Theorem}[{\cite[Theorems 3.5 and 7.2]{galashin2018parity}}]\label{th:parityduality}
There is a bijection between generalized triangles of $\mathcal{A}_{n,k,m}$ and generalized triangles of $\mathcal{A}_{n,\ell,m}$.
This bijection induces a one-to-one correspondence between positroidal triangulations of $\mathcal{A}_{n,k,m}$ and positroidal triangulations of $\mathcal{A}_{n,\ell,m}$. Moreover, this extends to positroidal dissections and their maximal cells.
\end{Theorem}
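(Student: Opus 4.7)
The plan is to construct the bijection explicitly, first combinatorially at the level of positroid cells via T-duality (parity conjugation), and then to upgrade it to a geometric correspondence that preserves the property of being a generalized triangle. Given a positroid $M$ in $\binom{[n]}{k}$, its T-dual $\hat{M}$ is a positroid in $\binom{[n]}{\ell}$ defined via a canonical combinatorial involution on the decorated permutations that index positroid cells. The crucial feature is that this pairing is dimension-matching: $\dim S_M = mk$ if and only if $\dim S_{\hat M} = m\ell$. Since $M\mapsto \hat M$ is an involution, one immediately obtains a bijection between positroid cells of $\Gr_+(k,n)$ of dimension $mk$ and positroid cells of $\Gr_+(\ell,n)$ of dimension $m\ell$, which restricts to the combinatorial bijection promised in the first half of the statement.

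Next, I would promote this combinatorial bijection to a geometric one. The key ingredient is the construction of a dual totally positive matrix $\tilde{Z}\in \Mat_+(\ell+m,n)$ from $Z\in \Mat_+(k+m,n)$ --- essentially via the sign-twisted orthogonal complement --- together with an identification (a diffeomorphism onto its image) between the interiors of $S_M$ and $S_{\hat M}$ that intertwines the projections $\pi_+$ and $\tilde\pi_+$ with a canonical isomorphism $\mathcal{A}_{n,k,m}(Z)\xrightarrow{\sim}\mathcal{A}_{n,\ell,m}(\tilde Z)$. Once such an intertwining is in place, $\pi_+$-injectivity on $S_M$ is equivalent to $\tilde\pi_+$-injectivity on $S_{\hat M}$, which establishes the bijection between generalized triangles.

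To extend the correspondence to positroidal triangulations and dissections, one checks the defining conditions of Definition~\ref{def:triangulations} under the intertwining: pairwise disjointness of full-dimensional images and density of the union in the amplituhedron. Both properties are preserved by the diffeomorphism of amplituhedra constructed in the previous step. Hence a collection $\mathcal{C}=\{S_M\}$ is a positroidal triangulation (resp.\ dissection) of $\mathcal{A}_{n,k,m}$ if and only if $\hat{\mathcal{C}}=\{S_{\hat M}\}$ is one of $\mathcal{A}_{n,\ell,m}$, and maximal cells correspond under the duality.

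The main obstacle will be the construction and verification of the intertwining geometric bijection in the second step. The subtlety is that $\mathcal{A}_{n,k,m}$ and $\mathcal{A}_{n,\ell,m}$ live in different Grassmannians of generally different dimensions $mk$ and $m\ell$, so the identification between amplituhedra is not a Grassmann duality in any direct sense and genuinely requires further geometric input. In \cite{galashin2018parity} this is handled via the \emph{twist map} of Marsh--Scott together with a chain of local BCFW-like moves, which together realize parity conjugation in terms of on-shell diagrams; the parity requirement that $m$ be even enters precisely at this stage, in order to make the sign-twists in the construction of $\tilde Z$ and in the twist map globally consistent. Once this identification is pinned down, the combinatorics of triangulations on both sides is translated term-by-term to yield the stated correspondence.
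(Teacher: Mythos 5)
First, note that the paper does not prove this statement at all: Theorem~\ref{th:parityduality} is quoted verbatim from Galashin--Lam \cite{galashin2018parity} (their Theorems 3.5 and 7.2), so the only ``proof'' in the paper is the citation, and your proposal is really an outline of the external argument. As such an outline it has a genuine structural flaw in its second step. You propose, for fixed $Z$, a diffeomorphism between the interiors of $S_M$ and $S_{\hat M}$ intertwining $\pi_+$ and $\tilde\pi_+$ with a canonical isomorphism $\mathcal{A}_{n,k,m}(Z)\to\mathcal{A}_{n,\ell,m}(\tilde Z)$, and you then transfer injectivity, disjointness and density through this commuting square. No such maps can exist: a generalized triangle $S_M$ has dimension $km$ while $S_{\hat M}$ has dimension $\ell m$, and likewise $\dim\mathcal{A}_{n,k,m}=km\neq \ell m=\dim\mathcal{A}_{n,\ell,m}$ whenever $k\neq\ell$. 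Parity duality is precisely \emph{not} induced by any pointwise identification of the two amplituhedra at fixed $Z$; this dimensional mismatch is the whole difficulty of the theorem. In \cite{galashin2018parity} the resolution is to let $Z$ vary: the stacked twist map is a homeomorphism between (dense subsets of) the \emph{universal} amplituhedra, whose dimensions $k\ell+km+\ell m$ are symmetric under $k\leftrightarrow\ell$, and the statements about generalized triangles, triangulations and dissections (the $m$-dimension, injectivity, disjointness and density conditions of Definition~\ref{def:triangulations}) are then transferred through that total-space identification, fiberwise over the $Z$-parameter, rather than through a map $\mathcal{A}_{n,k,m}(Z)\to\mathcal{A}_{n,\ell,m}(W)$.

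Two smaller inaccuracies: the combinatorial pairing you call ``T-duality'' is a different map (it relates $\mathcal{A}_{n,k,2}$ to the hypersimplex, cf.\ Remark~\ref{rem:Z} area and \cite{lukowski2020positive}); in \cite{galashin2018parity} the cell correspondence is produced by the stacked twist map itself rather than postulated first and geometrized afterwards. And the claim that evenness of $m$ ``enters precisely'' to make the sign twists consistent is off: the sign-twisted orthogonal complement $W$ of a totally positive $Z$ is totally positive for every $m$, and Galashin--Lam's duality is not restricted to even $m$; the standing hypothesis of even $m$ in \S\ref{subsec:paritydual} of this paper comes from the physics context and the expected $Z$-independence of triangulations, not from the twist construction.
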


\begin{remark}
For $m=2$, it was shown in \cite{lukowski2020positive} that the bijection in Theorem~\ref{th:parityduality} comes naturally from composing the Grassmannian duality with \emph{T-duality}, which relates dissections of $\A_{n,k,2}$ with those of the hypersimplex. The authors proposed a similar origin of parity duality for general even $m$.
\end{remark}
We recall that $\mathcal{A}_{n,1,m}$ is a cyclic polytope in $\mathbb{P}^m$. Here, we consider the case $\mathcal{A}_{n,n-m-1,m}$ related to the latter by parity duality and we refer to it as the amplituhedron \emph{conjugate to polytope}.
 
\medskip 

We give a simple characterization for generalized triangles of amplituhedra conjugate to polytopes. 
 
\begin{Lemma}\label{lem:gentriangles}
For $\ell=n-m-1$, there is a bijection between generalized triangles of 
$\mathcal{A}_{n,\ell,m}$ and $\ell$-subsets of $[n]$.
In particular, for any $\ell$-subset $J$ there exists a unique generalized triangle $\Delta_J$ which is the image of a positroid cell $S_{\widetilde{J}}$
in $\Gr_+(\ell,n)$ for which a cyclic $\ell$-interval $I_j$ is in $\widetilde{J}$ if and only of $j\not\in J$. Moreover, the bijection generalizes to maximal cells in subdivisions of $\mathcal{A}_{n,\ell,m}$ and $r$-subsets of $[n]$ for $1 \leq r \leq \ell$.
\end{Lemma}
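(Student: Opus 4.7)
The plan is to deduce the statement from parity duality (Theorem~\ref{th:parityduality}) combined with the elementary description of generalized triangles of a cyclic polytope on the $k=1$ side.

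First, I would identify the generalized triangles of $\mathcal{A}_{n,1,m}$, which by \S\ref{subsec:ampl} is a cyclic polytope in $\mathbb{P}^m$ with $n$ vertices. For $k=1$, positroid cells of $\Gr_+(1,n)$ are parametrized by their support: for every subset $I\subseteq[n]$ there is a positroid cell consisting of positive rays whose nonzero coordinates are exactly those indexed by $I$. Such a cell has dimension $|I|-1$, maps under $\pi_+$ to the open simplex with vertex set $\{\pi_+(e_i)\}_{i\in I}$, and $\pi_+$ is injective on it since any $m+1$ points on the moment curve are affinely independent. The generalized triangles of $\mathcal{A}_{n,1,m}$ (which must have dimension $m\cdot 1 = m$) are therefore in canonical bijection with the $(m+1)$-subsets of $[n]$.

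Next, I would invoke Theorem~\ref{th:parityduality} to transport this bijection to $\mathcal{A}_{n,\ell,m}$ with $\ell = n-m-1$. Taking complements in $[n]$ turns the indexing $(m+1)$-subsets into $\ell$-subsets, since $(m+1)+\ell = n$; composing with parity duality produces the asserted bijection $J\mapsto\Delta_J$ between $\ell$-subsets of $[n]$ and generalized triangles of $\mathcal{A}_{n,\ell,m}$.

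The combinatorial heart of the proof is to identify the positroid cell $S_{\widetilde{J}}$ corresponding to $\Delta_J$. By construction $\widetilde{J}$ is the parity-dual of the rank-one positroid whose unique support is $\bar J$. I would unpack the parity-duality operation on positroids from \cite{galashin2018parity}, which is the composition of Grassmann duality (orthogonal complement) with a cyclic shift, and track its effect on non-vanishing Pl\"ucker coordinates. The key cyclic $\ell$-Pl\"ucker coordinate $p_{I_j}$ on the dual cell corresponds under the duality to the singleton Pl\"ucker coordinate $p_{\{j\}}$ on the original cell; hence $I_j\in\widetilde{J}$ iff $\{j\}$ is in the support $\bar J$, i.e., iff $j\notin J$, matching the statement.

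Finally, for the extension to $r$-subsets with $1\le r\le\ell$, I would appeal to the fact that parity duality preserves stratifications and closures (Theorem~\ref{th:parityduality}), hence extends to all positroid cells of $\Gr_+(\ell,n)$ whose image is top-dimensional in $\mathcal{A}_{n,\ell,m}$, i.e., to the maximal cells of positroidal dissections. On the cyclic-polytope side, coarsenings of triangulations merge simplices into maximal cells indexed by vertex subsets of size $m+r$; complementation then yields the claimed correspondence with $r$-subsets of $[n]$. The main obstacle is the explicit cyclic-interval bookkeeping in the third step: proving the precise equivalence $I_j\in\widetilde{J}\iff j\notin J$ requires a careful trace through the (sign-twisted) orthogonal complement used in parity duality, since an a priori argument only guarantees that \emph{some} combinatorial description exists. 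Everything else is routine once Theorem~\ref{th:parityduality} is invoked.
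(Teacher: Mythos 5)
Your proposal takes essentially the same approach as the paper: describe generalized triangles of the cyclic polytope $\mathcal{A}_{n,1,m}$ via their supporting positroid cells in $\Gr_+(1,n)$ (indexed by $(m+1)$-subsets, equivalently $\ell$-subsets via complementation), and transport this bijection to $\mathcal{A}_{n,\ell,m}$ using parity duality (Theorem~\ref{th:parityduality}), extending to $r$-subsets in the same way. The one step you single out as the main obstacle --- establishing $I_j\in\widetilde J\iff j\notin J$ --- is exactly what the paper discharges by citing \cite[Lemma~11.5]{galashin2018parity}, which gives the cyclic Pl\"ucker-coordinate correspondence $p_j(\overline C)=0\iff p_{I_j}(C)=0$ for parity-dual cells, so you can invoke that lemma directly rather than re-tracing the sign-twisted orthogonal complement yourself.
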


\begin{proof}
Note that $\mathcal{A}_{n,\ell,m}$ is conjugate to the polytope $\mathcal{A}_{n,1,m}$ whose generalized triangles are simplices in $\mathbb{P}^m$. Given an $\ell$-subset of $[n]$,
the simplex $\bar{\Delta}_J$ (with vertices corresponding to the elements of $[n] \backslash J$) is the image of the positroid cell in $\Gr_+(1,n)$ whose vanishing Pl\"ucker coordinates are $p_j$ for $j\in J$.
Moreover, all generalized triangles in $\mathcal{A}_{n,1,m}$ arise in this way. 
Then by Theorem~\ref{th:parityduality}, there is a bijection between the set of simplices $\bar{\Delta}_J$ defined above and the set of generalized triangles $\Delta_J$ of $\mathcal{A}_{n,\ell,m}$. More precisely, \cite[Lemma~11.5]{galashin2018parity} provides an explicit relation between cyclic Pl\"ucker coordinates of generalized triangles related by parity duality.
In particular, for matrices $\overline{C} \in \Gr_+(1,n)$ and $C \in \Gr_+(\ell,n)$ representing positroid cells corresponding to $\bar{\Delta}_J$ and $\Delta_J$, we have that $p_{j}(\overline{C})=0$ if and only if $p_{I_j}(C)=0$. 
Therefore, the only vanishing Pl\"ucker coordinates of $\Delta_J$ with cyclic intervals are $p_{I_{j}}$ with $j \in J$. This implies the existence of generalized triangles of $\mathcal{A}_{n,\ell,m}$ with such a property. The uniqueness follows by the fact that the parity duality provides a bijection between the generalized triangles of $\mathcal{A}_{n,\ell,m}$ and those of $\mathcal{A}_{n,1,m}$. 

We can easily generalize the proof for $r$-subsets $J$ with $1 \leq r \leq \ell$. The polytope $\bar{P}_J$ (with vertices corresponding to the elements of the set $[n] \backslash J$) is just the image of the positroid cell in $\Gr_+(1,n)$ whose vanishing Pl\"ucker coordinates are $p_j$ for $j\in J$. Using parity duality, we can then map these to maximal cells in subdivisions of $\mathcal{A}_{n,\ell,m}$. These will be images of positroid cells $S_{\tilde{J}}$ in $\Gr_+(\ell,n)$ for which a cyclic $\ell$-interval $I_j$ is in $\widetilde{J}$ if and only if $j\not\in J$.
\end{proof}
 
\section{Fibers of amplituhedra and their volume forms}\label{sec:fibers}
In this section, we study the fibers 
 of the amplituhedron 
 and we show how the canonical form of the amplituhedron 
 can be obtained by a residue computation of a volume form 
 defined on fibers. 
 
\medskip

\paragraph{Notation.} We denote the $m\times m$ identity matrix by $\mathbb{I}_m$. Given an $m\times n$ matrix $A$ we denote $A^T$ for its transpose, and $A^I_J$ for its submatrix on the columns indexed by $I\subseteq[n]$ and rows indexed by $J\subseteq[m]$. When $J=[m]$, we simply write $A^I$ for $A^I_{J}$. We also denote $(A^I)^\perp$ for the orthogonal complement 
of $A^I$.

 \subsection{Amplituhedron fibers}\label{page:full_fiber}
 \begin{definition}\label{def:pi}
{\rm The projection $\pi_+:\Gr_+(k,n)\to \mathcal{A}_{n,k,m}$ of the positive Grassmannian induced by a totally positive matrix $Z$ in \eqref{eq:A_{n,k,m}} can be extended to a rational map 
\[
\pi:\Gr(k,n)\dashrightarrow\Gr(k,k+m),
\]
which is defined on the open dense subset 
$$U=\{  V:\ V {\rm\  is\ a \ subspace\ of\ } \Gr(k,n)\ {\rm and}\ V\cap \ker\, Z  = \{0\}\}\subset \Gr(k,n).$$ 
Given a point $Y$ in $\mathcal{A}_{n,k,m}$ we denote the \emph{full} and \emph{positive} fibers of $Y$ by $\pi^{-1}(Y)\subset \Gr(k,n)$ and $\pi_+^{-1}(Y)\subset \Gr_+(k,n)$, where the positive fiber $\pi_+^{-1}(Y)$ is given by $\pi^{-1}(Y) \cap \Gr_+(k,n)$.
} \end{definition}

To explicitly parametrize the full and positive fibers of the amplituhedron, we consider the matrix $Z \in \mbox{Mat}_{+}(m+k,n)$ from Definition~\ref{def:amp}. The matrix $Z$ induces a linear map 
$$
Z:\R^n\to\R^{m+k},
$$ 
which we will denote by the same letter. Note that any point $Y$ in $\mathcal{A}_{n,k,m}$ is a $k$-dim subspace in $\mathbb{R}^{m+k}$; therefore $Z^{-1}(Y)$ is a subspace of $\mathbb{R}^n$ of dimension $\mbox{dim}(Y)+\mbox{dim}(\mbox{ker}\, Z)=n-m$.

\begin{Proposition} \label{prop:fiber} 
Let $Y$ be a point in the amplituhedron $\mathcal{A}_{n,k,m}$. Then the following hold:
\begin{itemize}
    \item[{\rm(i)}] The full and positive fibers of $Y$ can be birationally parametrized by
    \[
     \Gr(k,Z^{-1}(Y)) \simeq \Gr(k,n-m) \dashrightarrow \pi^{-1}(Y)\ \text{ and }\ \pi^{-1}_+(Y)=\Gr_+(k,n) \cap \Gr(k,Z^{-1}(Y)) \subset \Gr(k,n-m).
    \]

\item[{\rm (ii)}] Fix a basis of $Z^{-1}(Y)$ and let $A \in {\rm Mat}(n-m,n)$ be the matrix of coordinates of its vectors with respect to the standard basis of $\mathbb{R}^n$.
Then $V$ is a $k \times n$ matrix representing an element of $\pi^{-1}(Y)$ if and only if there exists an element in $\Gr(k,n-m)$ whose 
representing matrix $\lambda$ satisfies:
\begin{equation}\label{eq:Propii}
    V=\lambda \cdot A.
\end{equation}

\item[{\rm (iii)}] The matrix $V$ represents an element of the positive fiber $\pi^{-1}_+(Y)$ if and only if
\begin{equation}\label{poscond}
p_I(V)=\sum_{J \in \binom{[n-m]}{k}} p_J(\lambda) \, \det (A^I_J)>0, \quad \text{for each}\ I \in \binom{[n]}{k}.
\end{equation}
\end{itemize}
\end{Proposition}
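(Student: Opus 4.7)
\medskip\noindent\textbf{Proof plan.}
My approach is to reduce each part to linear algebra: (i) by unraveling the definition of $\pi$, (ii) by writing (i) with respect to a fixed basis, and (iii) by a single application of the Cauchy--Binet formula.

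For (i), the starting observation is that $\pi$ is defined on the locus $V \cap \ker Z = \{0\}$ and sends $V$ to $Z(V)$ viewed as a subspace of $\R^{m+k}$. On this locus $Z|_V$ is injective, so $\dim Z(V) = k = \dim Y$, and hence $\pi(V) = Y$ is equivalent to $V \subseteq Z^{-1}(Y)$. Since $Z$ is surjective (being totally positive of rank $m+k$), the preimage $Z^{-1}(Y)$ has dimension $(n-m-k) + k = n-m$, which gives a (non-canonical) identification $\Gr(k, Z^{-1}(Y)) \simeq \Gr(k, n-m)$ obtained by fixing any basis of $Z^{-1}(Y)$. For birationality I would note that the excluded locus $\{V \in \Gr(k, Z^{-1}(Y)) : V \cap \ker Z \neq \{0\}\}$ is the Schubert subvariety associated to $\ker Z \subset Z^{-1}(Y)$; since $\ker Z$ has codimension $k$ in $Z^{-1}(Y)$, this cycle is a proper divisor, and its complement is open and dense. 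The description of $\pi_+^{-1}(Y)$ then follows by intersecting with $\Gr_+(k,n)$.

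For (ii), fix the basis of $Z^{-1}(Y)$ used in (i) as the rows of an $(n-m) \times n$ matrix $A$. Every $k$-subspace of $Z^{-1}(Y)$ is then the row span of $\lambda \cdot A$ for some full-rank $k \times (n-m)$ matrix $\lambda$: linear independence of the rows of $\lambda \cdot A$ is equivalent to linear independence of the rows of $\lambda$, because $x \mapsto xA$ is an injective linear map $\R^{n-m} \to \R^n$ with image $Z^{-1}(Y)$. Moreover, the $\GL_k$-ambiguity in $\lambda$ corresponds exactly to the $\GL_k$-ambiguity in a matrix representative of a point of $\Gr(k, n-m)$, yielding the asserted parametrization. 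For (iii), apply Cauchy--Binet to the $k \times k$ submatrix $V^I = \lambda \cdot A^I$:
\begin{equation*}
p_I(V) = \det(\lambda \cdot A^I) = \sum_{J \in \binom{[n-m]}{k}} p_J(\lambda)\, \det(A^I_J),
\end{equation*}
and the condition $V \in \Gr_+(k,n)$ is by definition the positivity of all these quantities, which by (i) coincides with $V \in \pi_+^{-1}(Y)$.

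No substantial obstacle arises; the only subtlety is carefully separating the rational map $\pi$ from its domain of definition in (i), so that one does not mistakenly claim an isomorphism with all of $\Gr(k, Z^{-1}(Y))$ rather than a birational equivalence onto an open dense subset.
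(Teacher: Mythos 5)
Your proof is correct and follows essentially the same route as the paper: identify $\pi^{-1}(Y)$ with (an open dense subset of) $\Gr(k,Z^{-1}(Y))\simeq\Gr(k,n-m)$ via a choice of basis $A$, write elements as $\lambda\cdot A$, and obtain the positivity condition from the Cauchy--Binet expansion of $p_I(\lambda\cdot A)$. If anything, you are a bit more explicit than the paper about the domain-of-definition/birationality issue (the Schubert divisor where $V\cap\ker Z\neq\{0\}$) and about naming Cauchy--Binet, both of which the paper leaves implicit.
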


\begin{proof}
Any element of the full fiber of $Y$ is a $k$-dim subspace of $Z^{-1}(Y)$. In other words, $\pi^{-1}(Y)=\Gr(k,Z^{-1}(Y))$. Since $Z^{-1}(Y)$ is an $(n-m)$-dim vector subspace of $\R^n$, we can pick a basis $\lbrace a_1,\ldots,a_{n-m}\rbrace$ with $a_l \in \mathbb{R}^n$ for each $1\leq l\leq n-m$. Therefore, any element of $\Gr\!\left(k,Z^{-1}(Y)\right)$ is a $k$-dim subspace spanned by some vectors $v_1,\ldots,v_k$ with $\mbox{span}\lbrace v_1,\ldots,v_k \rbrace \subseteq \mbox{span}\lbrace a_1,\ldots,a_{n-m}\rbrace$. 
More precisely,
\begin{equation*}
    v_\alpha=\sum_{l=1}^{n-m}\lambda_{\alpha l} a_l\quad \text{ for }\quad\alpha=1,\ldots,k,
\end{equation*}
where the coefficients $\lambda_{\alpha l}$ form a matrix $\lambda \in \mbox{Mat}(k,n-m)$. Now by considering the standard basis of $\mathbb{R}^n$ and the notation above, we denote the coordinates of the vectors $v_{\alpha}$ and $a_{l}$ with $v_{\alpha i}$ and $a_{l i }$, respectively. In terms of such coordinates we can write:
\begin{equation*} 
    V=\lambda \cdot A,\, \text{ where }  V \in {\rm Mat}(k,n), \, \lambda \in \mbox{Mat}(k,n-m) \, \text{ and } A \in \mbox{Mat}(n-m,n).
\end{equation*}
Since $V$ represents an equivalence class of points in 
$\Gr(k,n)$,  so does $\lambda$ in $\Gr(k,n-m)$. Therefore $\Gr\!\left(k,Z^{-1}(Y)\right) \simeq
\Gr(k,n-m)$ and it has dimension $k(n-m-k)$. An element in the positive fiber $\pi^{-1}_+(Y)$ has to be an element of $\Gr_+(k,n)$, therefore $V$ has to satisfy \eqref{poscond}
where $p_I(V)$ and $p_J(\lambda)$ are Pl\"ucker coordinates of $V$ and $\lambda$, respectively. 
\end{proof} 

\begin{remark}\label{page:AandZ}
We can characterize the space $Z^{-1}(Y)$ and therefore the matrix $A$ more explicitly, as: 
\begin{itemize}
    \item[(i)] The space $Z^{-1}(Y)$ can be realized as 
\begin{equation*}
\mbox{span}\lbrace b_1,\ldots,b_{n-m-k}, c_1(Y),\ldots,c_k(Y)\rbrace,
\end{equation*}
where $\lbrace b_1,\ldots,b_{n-m-k}\rbrace$ is a basis of $\mbox{ker}\, Z$ and $\{c_1(Y),\ldots,c_k(Y)\}$ is a basis of the complement subspace of $\mbox{ker}\, Z$ which is obtained from particular independent solutions of the equations:
\begin{equation*}
    Y_\alpha=Z(c_\alpha) \quad \text{ for }\quad \alpha=1,\ldots,k.
\end{equation*}

\item[(ii)] Let
$Z=\big(\widetilde{z}_{(m+k)\times (m+k)}|z_{(m+k)\times (n-m-k)}\big)^T$. Then 
one can take $\lbrace b_1,\ldots,b_{n-m-k}\rbrace$ to be the rows of the matrix $Z^\perp:=\big((-\widetilde{z}^{-1} \cdot z )^T|\mathbb{I}_{n-m-k}\big)$ and $c_\alpha(Y)$ to be $$c_\alpha(Y):=\big((Y_\alpha)_{1 \times m+k} \cdot \widetilde{z}^{-1}|\mathbb{0}_{1 \times {(n-m-k)}}\big).$$ Therefore, with this choice of $Z$ we have that
\begin{equation}\label{matrixA}
A=
\left(
\begin{array}{c|c}
-(\widetilde{z}^{-1} \cdot z)^T & \mathbb{I}_{n-m-k} \\
\hline
Y \cdot \widetilde{z}^{-1} & \mathbb{0}_{k \times (n-m-k)}
\end{array}
\right).
\end{equation}

\item[(iii)] As mentioned in Remark~\ref{rem:Z}, the geometric properties of the amplituhedron is independent of the choice of $Z$. Hence, without loss of generality we assume that $\widetilde{z}=\mathbb{I}_{m+k}$. From now on, we will often work with this choice of $Z$ for the concrete computations; See e.g.~Lemma~\ref{dets}. 
\end{itemize}
\end{remark}

\begin{remark}
We would like to remark that the fibers of the amplituhedron are expected to have nice topological properties. For example, in \cite{MR3908855} the authors showed that for the amplituhedra conjugate to polytopes, the fibers are homeomorphic to a ball.
\end{remark}

In the following we provide a geometric description of the boundaries of the positive fibers. 

\begin{Proposition}\label{prop:boundaries}
Let $Y$ be an element of $\mathcal{A}_{n,k,m}$ and $A$ be the matrix in \eqref{eq:Propii}. 
The boundaries of the positive fiber $\pi_+^{-1}(Y)$ are (some of) the following hypersurfaces:
\begin{equation*}
    \mathcal{B}_i:= \lbrace \lambda \in \Gr(k,n-m):\ \dim \left( \lambda \cap (A^{I_i})^\perp \right) \geq 1  \rbrace\quad\text{for}\quad i=1,\ldots,n.
\end{equation*}
\end{Proposition}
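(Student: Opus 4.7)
The plan is to use the parametrization in Proposition~\ref{prop:fiber} to translate the boundary of $\pi_+^{-1}(Y)\subset\Gr(k,n)$ into conditions on $\lambda\in\Gr(k,n-m)$. By item~(iii) of that proposition, a point $\lambda$ lies in $\pi_+^{-1}(Y)$ iff every Plücker coordinate $p_I(\lambda\cdot A)$ is strictly positive, so its boundary (inside the full fiber $\pi^{-1}(Y)\simeq\Gr(k,n-m)$) is contained in the locus where at least one $p_I(\lambda\cdot A)$ vanishes. By Remark~\ref{rem:boundary}, the boundary of $\Gr_{\geq}(k,n)$ inside $\Gr(k,n)$ is already covered by the $n$ cyclic hypersurfaces $\{p_{I_i}=0\}$, $i=1,\ldots,n$; pulling this back along $\lambda\mapsto\lambda\cdot A$ shows that the boundary of $\pi_+^{-1}(Y)$ is contained in $\bigcup_{i=1}^{n}\{\lambda : p_{I_i}(\lambda\cdot A)=0\}$.

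The second step is to identify each of these pulled-back loci with $\mathcal{B}_i$. Representing $\lambda$ by a $k\times(n-m)$ matrix of rank $k$ whose rows span the subspace $\lambda$, the columns of $\lambda\cdot A$ indexed by $I_i$ form the $k\times k$ product $\lambda\cdot A^{I_i}$, so $p_{I_i}(\lambda\cdot A)=\det(\lambda\cdot A^{I_i})$. This determinant vanishes iff there exists a non-zero $w\in\mathbb{R}^k$ with $w^{T}(\lambda\cdot A^{I_i})=0$; equivalently, the row vector $w^{T}\lambda$ is non-zero (since $\lambda$ has full row rank) and orthogonal to every column of $A^{I_i}$. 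Since $w^{T}\lambda$ sweeps out the subspace $\lambda\subset\mathbb{R}^{n-m}$ as $w$ varies in $\mathbb{R}^k$, this is exactly the condition $\lambda\cap (A^{I_i})^{\perp}\neq\{0\}$, i.e.\ $\lambda\in\mathcal{B}_i$. An entirely equivalent route uses the Cauchy--Binet expansion already recorded in~\eqref{poscond} and interprets its vanishing dually.

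Combining the two steps yields the claim that every boundary component of $\pi_+^{-1}(Y)$ is contained in some $\mathcal{B}_i$. The parenthetical ``some of'' in the statement accounts for the possibility that a given $\mathcal{B}_i$ fails to actually bound the positive fiber --- for instance, if the corresponding Plücker coordinate $p_{I_i}(\lambda\cdot A)$ is nowhere zero on the closure of $\pi_+^{-1}(Y)$, or if $\mathcal{B}_i$ does not meet $\pi_+^{-1}(Y)$ at all. The only genuine subtlety is the linear-algebraic identification of $\det(\lambda\cdot A^{I_i})=0$ with the intersection condition $\dim(\lambda\cap (A^{I_i})^{\perp})\geq 1$; here it is essential to interpret $\lambda$ as the row span of its matrix representative and $(A^{I_i})^{\perp}$ as the orthogonal complement in $\mathbb{R}^{n-m}$ of the $k$-dimensional column span of $A^{I_i}$, so that the dimensions balance as $k+(n-m-k)=n-m$ and a non-trivial intersection is indeed a codimension-one condition, matching the hypersurface structure asserted in the proposition.
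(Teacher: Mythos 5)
Your proof is correct and follows essentially the same route as the paper: both reduce the boundary of $\pi_+^{-1}(Y)$ to the vanishing of the cyclic Pl\"ucker coordinates $p_{I_i}(\lambda\cdot A)$ (using Remark~\ref{rem:boundary} and Proposition~\ref{prop:fiber}) and then identify this vanishing with the condition $\dim\left(\lambda\cap (A^{I_i})^\perp\right)\geq 1$. The only cosmetic difference is that the paper performs this identification via the Cauchy--Binet identity $\det\left(\lambda\,|\,(A^{I_i})^\perp\right)=p_{I_i}(V)$ together with the full-rank observation, whereas you argue directly with a left-kernel vector of $\lambda\cdot A^{I_i}$; the two arguments are equivalent.
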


\begin{proof}
First note that for the cyclic $k$-interval $I$, the submatrix $A^{I}$ represents an element of $\Gr(k,n-m)$ and its orthogonal complement represents an element in $\Gr(n-m-k,n-m)$.
Now, by definition, the boundaries of the positive fiber $\pi_+^{-1}(Y)$ are the intersection of $\Gr_+(k,n)$ with the full fiber  $\pi^{-1}(Y)$. More precisely, for $V \in \pi_+^{-1}(Y)$ they occur when $p_{I}(V)=0$ for cyclic $k$-intervals $I$ of $[n]$. Moreover, we have: 
\begin{equation*}
   \mbox{det}(\lambda|(A^I)^\perp)=\sum_{J,L} \epsilon_{J L} p_J(\lambda) p_L((A^I)^\perp)=\sum_{J,L,S} \epsilon_{J L} p_J(\lambda) \epsilon_{SL} p_S(A^I)=\sum_{J} p_J(\lambda) p_J(A^I)=p_I(V)
\end{equation*}
where $J$ and $S$ run over $\binom{[n-m]}{k}$ and $L$ runs over $\binom{[n-m]}{\ell}$.
Therefore, on the boundary $p_I(V)=0$ one has  $\mbox{det}(\lambda|(A^I)^\perp)=0$. Since both $\lambda$ and $(A^I)^\perp$ have full rank, the claim follows immediately.
\end{proof}

\begin{remark}\label{rem:dual}
We observe that, instead of working with $\lambda$ representing an element in $\Gr(k,n-m)$, we can consider  $\bar{\lambda}=\lambda^\perp$
representing an element in $\Gr(\ell,n-m)$ for $\ell=n-m-k$. More precisely, using the standard Grassmannian duality we have $\pi^{-1}(Y) \simeq \Gr(k,n-m) \simeq \Gr(\ell,n-m)$. Now, it is straightforward to see that, mimicking Proposition~\ref{prop:boundaries}, one has boundaries on
\begin{equation}\label{eq:dual_boundaries}
    \overline{\mathcal{B}}_i:= \lbrace \bar{\lambda} \in \Gr(\ell,n-m): \dim \left( \bar{\lambda} \cap A^{I_i} \right) \geq 1  \rbrace.
\end{equation}
\end{remark}
\begin{remark}
A consequence of the above parametrization of fibers of $\mathcal{A}_{n,k,m}$ is that the \emph{parity duality} manifests itself as the simple Grassmannian duality between $\Gr(k,n-m)$ and $\Gr(\ell,n-m)$.
In particular, the boundaries $\mathcal{B}_i$ of fibers of $\mathcal{A}_{n,k,m}$ correspond to the boundaries $\overline{\mathcal{B}}_i$ of $\mathcal{A}_{n,\ell,m}$.  See, e.g.~\cite{galashin2018parity}.
\end{remark}

We now provide two examples which are parity dual to each other, namely cyclic polytopes and their conjugates. We show that the positive fibers of these amplituhedra are bounded by some hyperplanes. 

\begin{example}[Cyclic polytopes]\label{ex:polytope2}
The positive fibers of $\mathcal{A}_{n,1,m}$ are given by elements $V=(v_i)$ in $\Gr(1,n)$. 
Note that in this case $\lambda \in \Gr(1,n-m) \simeq \mathbb{P}^{n-m-1}$. Moreover, each cyclic $1$-interval $I_i$ is the singleton $\{i\}$, hence $A^{I_i}$ is a vector which we simply denote it by $A^i$. The positive fibers of $\mathcal{A}_{n,1,m}$ are therefore regions in $\mathbb{P}^{n-m-1}$ bounded by (some of) the hyperplanes 
which are normal to the vectors $A^i$. By Definition~\ref{def:projpol}, $\pi^{-1}_+(Y)$ is therefore a projective polytope in $\mathbb{P}^{n-m-1}$. Alternatively, by Proposition~\ref{prop:boundaries}, the boundaries $\mathcal{B}_i$ 
are hyperplanes whose normal vectors are $A^i$.
Let us now choose $A$ such that its first $n-m-1$ rows span $\mbox{ker}\, Z$, e.g.~as in \eqref{matrixA}. By choosing an affine chart for which $\lambda_{n-m-1}=1$, we note that the normal fan of this affine realization of the projective polytope is the Gale dual of $Z$, as expected by Theorem~\ref{thgaledual}. In particular, the rays of the secondary fan of $\mathcal{A}_{n,1,m}$ are the columns of $Z^\perp$.
\end{example}

\begin{example}[Conjugate to polytopes]\label{cor:conjfibers}
Consider the amplituhedron $\mathcal{A}_{n,n-m-1,m}$ and fix a point $Y$ in it.
The positive fiber $\pi_+^{-1}(Y)$ are bounded by a configuration of hyperplanes in $\Gr(n-m-1,n-m)$.
Moreover, its facets are contained on (some of) the hyperplanes defined by $A^{I_i}$ for cyclic $(n-m-1)$-intervals $I_i$ of $[n]$. 
To see this, note that by Remark~\ref{rem:dual}, the boundaries of the positive fibers $\pi_+^{-1}(Y)$ are the hypersurfaces $\overline{\mathcal{B}}_i$ in \eqref{eq:dual_boundaries} with $i \in [n]$. These, in particular, contain all the elements $\bar{\lambda}$ in $\Gr(1,n-m) \simeq \mathbb{P}^{n-m-1}$ which intersect the hyperplane $A^{I_i}$. Therefore, each hyperplane $\overline{\mathcal{B}}_i$ coincides with the hyperplane $A^{I_i}$.
\end{example}

\subsection{Fiber volume form of the amplituhedron} \label{sec:formfibers}
Our goal in this section is to exploit the fibers of $\mathcal{A}_{n,k,m}$ in order to compute the canonical form  $\mathbf{\Omega}(\mathcal{A}_{n,k,m})$. For this purpose, we decompose 
$ \mathbf{\Omega}(\Gr_+(k,n))$ in two parts: one only depending on the base coordinates of $\mathcal{A}_{n,k,m}$, and the other one depending on the fiber coordinates. 
To do this, we recall from~\eqref{volumefunc} that the canonical form of $\mathcal{A}_{n,k,m}$ can be written in terms of its canonical function as:
\begin{equation*}
    \mathbf{\Omega}(\mathcal{A}_{n,k,m})=\Omega (\mathcal{A}_{n,k,m}) \, \mu_{\Gr(k,k+m)}(Y).
\end{equation*}
We will introduce the notion of \emph{fiber volume form} $\omega_{n,k,m}$ for a general amplituhedron and explicitly compute it in terms of the birational parametrization of fibers given in \S\ref{sec:fibers}. 
To encode all the information of 
$\Omega (\mathcal{A}_{n,k,m})$ in the fiber volume form, we choose the pullback of $\mu_{\Gr(k,k+m)}$ in $\Gr(k,n)$ via $\pi$ as the volume form of the base, and make the following definition:
\begin{definition}\label{def:omegafiber}
{\rm Let $\omega$ be a $k(n-m-k)$ form
on $\Gr(k,n)$ such that
\begin{equation}\label{eq:form}
    \mathbf{\Omega}(\Gr_+(k,n))=\omega \wedge \pi^*( \mu_{\Gr(k,k+m)}).
\end{equation}
Here, $\pi^*( \mu_{\Gr(k,k+m)})$ is the pullback of $\mu_{\Gr(k,k+m)}$ in $\Gr(k,n)$ via the map $\pi$.
We define $\omega_{n,k,m}$ to be the \emph{volume form  on the fiber} $\pi^{-1}(Y)$ of
$\mathcal{A}_{n,k,m}$ by restricting $\omega$ to the fiber:
\begin{equation*}
    \omega_{n,k,m}:=\left. \omega \right|_{\pi^{-1}(Y)}.
\end{equation*}
}\end{definition}

\begin{remark}
Note that the form $\omega$ in Definition~\ref{def:omegafiber} is a covariant form of degree $-(m+k)$ which is not unique. But its restriction to the fiber does not depend on the choice of $\omega$. To see this, let $\omega'$ be another form on $\Gr_+(k,n)$, such that 
$\mathbf{\Omega}(\Gr_+(k,n))=\omega' \wedge \pi^*( \mu_{\Gr(k,k+m)})$. Then 
$
(\omega' -\omega)\wedge \pi^* (\mu_{\Gr(k,k+m)})=0,
$
and so $\left.(\omega' -\omega)\right|_{\pi^{-1}(Y)}=0$. More precisely, the map $\pi:\Gr(k,n)\to \Gr(k,n+m)$ defines a decreasing filtration on the space of differential forms:
$$
J_0\supset J_1 \supset \ldots \supset J_{km},
$$
where $J_i$ is a differential ideal generated by the pullback of the $i$-forms on $\Gr(k,k+m)$. It is easy to see that for any form $\eta \in J_r$ with $r\geq 1$, the restriction of $\eta$ to the fiber vanishes, i.e.~we have $\eta|_{\pi^{-1}(Y)}=0$. Also, the condition that $(\omega' -\omega)\wedge \pi^*( \mu_{\Gr(k,k+m)})=0$ guarantees that $(\omega' -\omega) \in J_1$.
\end{remark}

In this section, instead of working with differential forms on $\Gr(k,n)$ we will equivalently work with \emph{basic forms} on the corresponding space of matrices $\Mat(k,n)$. We quickly recall the relevant definitions and refer to \cite{arnold} and \cite[Appendix C]{positivegeom} for more details. 

\begin{definition} \label{basic}
{\rm Let $p:\Mat(k,n) \to \Gr(k,n)$ be the natural projection, which is a principal $\GL_k$-bundle.
We call a form $\eta$ on $\Mat(k,n)$ \emph{basic} if it is a pullback of a form on $\Gr(k,n)$.
}\end{definition}
\noindent Moreover, it is well-known that a form $\eta$ is basic if and only if the following two conditions hold:
\begin{itemize}
    \item[(i)] for any vector field $v$ coming from differentiation of $\GL_k$-action, we have $\iota_v \eta =0$, where $\iota_v \eta$ is the \emph{contraction} of a vector field $v$ with a differential form $\eta$;
    \item[(ii)] the form $\eta$ is $\GL_k$-invariant, or equivalently the Lie derivative $\mathscr{L}_v(\eta)=0$ for any $v$ as above.
\end{itemize}
We will also work with a more general class of forms which are covariant under $\GL_k$-action. 
\begin{definition}\label{page:covariant}
{\rm Let $\eta$ be a form on $\Mat(k,n)$.
\begin{itemize}
    \item The form $\eta$ is called \emph{covariant} of degree $r$ if it scales by $\det^{r}(G)$ 
under the action of $G$ in $\GL_k$.
\item A covariant form $\eta$ is called a \emph{basic covariant form} of degree $r$ if it satisfies 
$\mathrm{(i)}$ and \eqref{eq:covariant}:
\begin{equation}\label{eq:covariant}
   \mathscr{L}_v(\eta)=r \, \eta,
\end{equation}
for any vector field $v$ coming from differentiation of $\GL_k$-action.
\end{itemize}
}\end{definition}
The basic covariant forms define twisted forms on $\Gr(k,n)$. More precisely, let $U\to \Gr(k,n)$ be the universal bundle. Then a covariant form on $\Mat(k,n)$ of degree $r$ defines a form on $\Gr(k,n)$ with coefficients in the line bundle $(\det\,U)^r:=\left(\bigwedge^k(U)\right)^r.$

\begin{Lemma}\label{lem:contrations}
Let $v$ be the wedge product of $k^2$ vector fields generating a $\GL_k$-orbit in ${\rm Mat}(k,n)$ as follows:
\begin{equation}\label{eq:fieldGL}
      v := \bigwedge_{\alpha,\beta=1}^k \sum_{i=1}^n c_{\alpha i} \frac{\partial}{\partial c_{\beta i}}.
\end{equation}
Then we have:
\[
{\rm (i)}\   \iota_v \mathbf{\Omega}(\Mat_+(k,n))=\mathbf{\Omega}(\Gr_+(k,n))\quad\text{and}\quad
 {\rm (ii)}\   \iota_v {\rm d}^{k(k+m)} Y^*= \mu^*_{\Gr(k,k+m)},
\]
where ${\rm d}(Y^A_\alpha)^*$ are the pullbacks of ${\rm d}(Y^A_\alpha)$ to $\Gr(k,n)$ from the amplituhedron via the map $Y=C \cdot Z$.
\end{Lemma}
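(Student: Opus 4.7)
The plan is to treat both parts as instances of the general fact that contracting a $\GL_k$-invariant top form on the total space of the principal $\GL_k$-bundle $\Mat(k,n)\to\Gr(k,n)$ with a top polyvector field along the fibers yields a basic form descending to the base. The vector fields $v_{\alpha\beta}=\sum_i c_{\alpha i}\,\partial/\partial c_{\beta i}$ are the fundamental vector fields of the left $\GL_k$-action on $\Mat(k,n)$, and their wedge $v$ is a non-vanishing top polyvector along each $\GL_k$-orbit; hence $\iota_v$ is precisely the operation that reduces $kn$-forms on $\Mat(k,n)$ to $k(n-k)$-forms on $\Gr(k,n)$. The one identity that I would use throughout is the contraction formula $\iota_{v_{\alpha\beta}}\,dc_{\gamma i}=\delta_{\beta\gamma}\,c_{\alpha i}$, immediate from the definition of $v_{\alpha\beta}$.

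For part (i), I would take $\mathbf{\Omega}(\Mat_+(k,n))$ to be the $\GL_k$-invariant form $\bigl(\bigwedge_{\alpha,i}dc_{\alpha i}\bigr)\big/\prod_{j=1}^n p_{I_j}(C)$ and exploit the decomposition $v=\bigwedge_{\beta=1}^k\bigl(v_{1\beta}\wedge\cdots\wedge v_{k\beta}\bigr)$. Since the $\beta$-th block only acts on row-$\beta$ differentials, the contraction factorizes row-by-row. For each fixed $\beta$, iterating the anti-derivation $\iota$ on $\bigwedge_{i=1}^n dc_{\beta i}$ selects a $k$-subset $I\subset[n]$, replaces $\bigwedge_{i\in I}dc_{\beta i}$ by $\epsilon_{I\bar I}\,p_I(C)$, and leaves $\bigwedge_{\bar i\in\bar I}dc_{\beta\bar i}$ untouched, producing exactly $\langle C\,d^{n-k}C_\beta\rangle$ in the notation of Definition~\ref{def:formbase}. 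Wedging over $\beta$ reconstructs $\mu_{\Gr(k,n)}(C)$, and since the denominator $\prod_j p_{I_j}(C)$ is inert under $\iota_v$, one recovers $\mathbf{\Omega}(\Gr_+(k,n))$.

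For part (ii), the analogous key identity is $\iota_{v_{\alpha\beta}}\,dY^A_\gamma=\delta_{\beta\gamma}\,Y^A_\alpha$, which follows from writing $dY^A_\gamma=\sum_i Z^A_i\,dc_{\gamma i}$ (using $Y=CZ$) and applying the above contraction formula. The same row-by-row strategy then applies verbatim: grouping $v=\bigwedge_{\beta}\bigl(v_{1\beta}\wedge\cdots\wedge v_{k\beta}\bigr)$ and iterating on $\bigwedge_{A=1}^{k+m}dY^A_\beta$ produces $\sum_{I}\epsilon_{I\bar I}\,p_I(Y)\bigwedge_{\bar A\in\bar I}dY^{\bar A}_\beta=\langle Y\,d^m Y_\beta\rangle$. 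Wedging over $\beta=1,\ldots,k$ yields precisely $\mu_{\Gr(k,k+m)}$, which, viewed as the pullback to $\Mat(k,n)$ via $Y=CZ$, is by definition $\mu^*_{\Gr(k,k+m)}$.

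The main technical obstacle in both parts is the sign bookkeeping: reordering the vector fields in $v$ and the 1-forms in the ambient top form produces permutation signs which must line up with the Levi-Civita symbols $\epsilon_{I\bar I}$ appearing in Definition~\ref{def:formbase}. A clean way to manage this is to fix once and for all a total order on the index pairs $(\alpha,\beta)$ and on the indices $i$ (resp.\ $A$), and to use repeatedly the identity $\iota_{\xi\wedge\eta}=\iota_\eta\circ\iota_\xi$; after the anti-derivation property of $\iota$ is applied systematically, the remaining combinatorics is just a Laplace-type determinantal expansion, making the signs track automatically.
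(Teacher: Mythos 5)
Your proposal is correct. For part (i) it coincides with the paper's argument: the authors likewise rewrite the contraction of $v$ against $\bigwedge_{\alpha,j}\mathrm{d}c_{\alpha j}$ as a sum over $k$-subsets $I$ producing $\epsilon_{I\bar I}\,p_I(C)$ and the factors $\langle C\,\mathrm{d}^{n-k}C_\alpha\rangle$, with the denominator $\prod_j p_{I_j}(C)$ passing through the contraction untouched. For part (ii) your route is genuinely different in its bookkeeping: you contract directly against the pulled-back differentials using the single identity $\iota_{v_{\alpha\beta}}\,\mathrm{d}(Y^A_\gamma)^*=\delta_{\beta\gamma}\,Y^A_\alpha$ (valid because $\mathrm{d}(Y^A_\gamma)^*=\sum_i Z^A_i\,\mathrm{d}c_{\gamma i}$ and contraction only sees the pairings of $v_{\alpha\beta}$ with the $1$-forms), so the computation reproduces verbatim the $\Mat(k,k+m)$ pattern and lands on the pullback of $\bigwedge_\beta\langle Y\,\mathrm{d}^m Y_\beta\rangle=\mu_{\Gr(k,k+m)}$. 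The paper instead expands everything in the $c_{\alpha i}$-coordinates: it writes $\mathrm{d}^{k(m+k)}Y^*=\bigwedge_\alpha\langle Z^\perp\,\mathrm{d}^{m+k}C_\alpha\rangle$, derives the explicit $c$-coordinate expression $\mu^*_{\Gr(k,k+m)}=\bigwedge_\alpha\langle C\,Z^\perp\,\mathrm{d}^m C_\alpha\rangle$, and matches the two via a double Levi-Civita contraction involving $p_L(Z)$ and $p_I(C)$. Your version buys a shorter and more transparent argument that avoids the $Z$-dependent bracket identities, at the cost of not producing the explicit formula for $\mu^*_{\Gr(k,k+m)}$ in terms of $C$ and $Z^\perp$ that the paper's computation yields along the way; both treatments leave the overall sign/ordering conventions for $v$ implicit, and your remark on fixing an order of the pairs $(\alpha,\beta)$ and using $\iota_{\xi\wedge\eta}=\iota_\eta\circ\iota_\xi$ is an adequate way to settle that.
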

\begin{proof}
We first rewrite $v$ as: 
\begin{equation*}
    v=\bigwedge_{\alpha=1}^k \sum_{I \in \binom{[n]}{k}} p_{I}(C) \bigwedge_{i \in I} \frac{\partial}{\partial c_{\alpha i}}.
\end{equation*}
Then (i) follows immediately from the following relation:
\begin{equation*}
  \iota_v \bigwedge_{\alpha=1}^k \bigwedge_{j=1}^n \mbox{d}c_{\alpha j}= \bigwedge_{\alpha=1}^k \sum_{I \in \binom{[n]}{k}} \epsilon_{I \bar{I}} \, p_{I}(C) \bigwedge_{j \in \bar{I}} \mbox{d} c_{\alpha j}=\bigwedge_{\alpha=1}^k \langle C \mbox{d}^{n-k} C_{\alpha} \rangle.
\end{equation*}
Now, let us consider:
\begin{eqnarray*}
  \mbox{d}^{k(m+k)}Y^*=\bigwedge_{\alpha=1}^k \sum_{L \in \binom{[n]}{m+k}} p_{L}(Z) \bigwedge_{l \in L} \mbox{d} c_{\alpha l}=\bigwedge_{\alpha=1}^k \langle Z^\perp \mbox{d}^{m+k} C_{\alpha}\rangle.
\end{eqnarray*}
We also have:
\begin{eqnarray*}
   \mu^*_{\Gr(k,k+m)}(C) &=& \bigwedge_{\alpha=1}^k \epsilon_{A_1 \cdots A_{k+m}} c_{1 i_1} \cdots c_{k i_k} Z_{i_1}^{A_1} \cdots  Z_{i_k}^{A_k} \wedge \mbox{d} c_{\alpha i_{k+1}} Z_{i_{k+1}}^{A_{k+1}} \wedge\cdots \wedge \mbox{d} c_{\alpha i_{k+m}} Z_{i_{k+m}}^{A_{k+m}}\\
   &=& \bigwedge_{\alpha=1}^k p_{i_1 \ldots i_k}(C) p_{i_1 \ldots i_{k+m}}(Z) \, \mbox{d} c_{\alpha i_{k+1}}\wedge \cdots \wedge \mbox{d} c_{\alpha i_{k+m}}=\bigwedge_{\alpha=1}^k \langle C \, Z^\perp \, \mbox{d}^{m}C_{\alpha}\rangle,
\end{eqnarray*}
where the sums are performed over all indices $i_a$ from $1$ to $n$ and over all indices $A_a$ from $1$ to $m+k$, with $a \in [k]$.
This implies the following relation
\begin{equation*}
\iota_v \, \mbox{d}^{k(m+k)}Y^* = \bigwedge_{\alpha=1}^k \sum_{L \in \binom{[n]}{m+k}} \sum_{M \in \binom{[n]}{m}} \sum_{I \in \binom{[n]}{k}}\epsilon_{\bar{L}L} \, \epsilon_{\bar{L} I M} \, p_{L}(Z) \, p_I(C) \bigwedge_{j \in M} \mbox{d} c_{\alpha j}=\bigwedge_{\alpha=1}^k \langle C \, Z^\perp \, \mbox{d}^{m}C_{\alpha}\rangle,
\end{equation*}
which completes the proof of (ii).
\end{proof}

\begin{Proposition}\label{prop:formfiberchar} 
Let $\omega$ be a $k(n-m-k)$ form on $\mathbb{R}^{kn}$ such that:
\begin{itemize}
    \item[{\rm(i)}] for any vector field $v$ tangent to $\GL_k$-orbits $\iota_v \omega=0$,
    \item[{\rm (ii)}] $\mathbf{\Omega}({\Mat}_+(k,n))=\omega \wedge {\rm d}^{k(m+k)} Y^*$.
\end{itemize}
Then the restriction of $\omega$ on the fiber $\pi^{-1}(Y)$ of $\mathcal{A}_{n,k,m}$ is the fiber volume form $\omega_{n,k,m}$.
\end{Proposition}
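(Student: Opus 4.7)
The plan is to show that any $\omega$ satisfying (i) and (ii) forces the defining identity $\mathbf{\Omega}(\Gr_+(k,n)) = \omega \wedge \pi^*\mu_{\Gr(k,k+m)}$ of Definition~\ref{def:omegafiber} to hold on $\mathrm{Mat}(k,n)$, and hence its restriction to $\pi^{-1}(Y)$ equals $\omega_{n,k,m}$. The strategy is to contract both sides of hypothesis~(ii) with the $k^2$-vector field $v$ from~\eqref{eq:fieldGL} and invoke Lemma~\ref{lem:contrations} together with condition~(i).

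First I would apply $\iota_v$ to the identity $\mathbf{\Omega}({\rm Mat}_+(k,n)) = \omega \wedge {\rm d}^{k(m+k)} Y^*$. By Lemma~\ref{lem:contrations}(i) the left-hand side becomes $\mathbf{\Omega}(\Gr_+(k,n))$. For the right-hand side, decompose $v = X_1 \wedge \cdots \wedge X_{k^2}$ into the individual vector fields $X_\ell$ tangent to $\GL_k$-orbits. The interior product of a multi-vector on a wedge of forms satisfies the graded Leibniz rule
\[
\iota_v\bigl(\omega \wedge {\rm d}^{k(m+k)} Y^*\bigr) \;=\; \sum_{I \sqcup J = [k^2]} \pm\,(\iota_{v_I}\omega) \wedge \bigl(\iota_{v_J} {\rm d}^{k(m+k)} Y^*\bigr),
\]
with $v_I := \bigwedge_{\ell \in I} X_\ell$. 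Hypothesis~(i) gives $\iota_{X_\ell}\omega = 0$ for every $\ell$, so iterating the interior product yields $\iota_{v_I}\omega = 0$ whenever $|I|\geq 1$. Thus only the $I=\varnothing$ term survives, which by Lemma~\ref{lem:contrations}(ii) equals $\pm\,\omega\wedge\mu^*_{\Gr(k,k+m)}$. Combining the two steps produces $\mathbf{\Omega}(\Gr_+(k,n)) = \pm\,\omega\wedge\mu^*_{\Gr(k,k+m)}$, matching~\eqref{eq:form} of Definition~\ref{def:omegafiber} up to a fixed sign depending on the orientation of $\Gr(k,n)$.

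To conclude I would restrict both sides to the fiber $\pi^{-1}(Y)$. As observed in the remark after Definition~\ref{def:omegafiber}, the $\omega$ satisfying \eqref{eq:form} is not unique on $\mathrm{Mat}(k,n)$, but any two admissible choices differ by an element of the differential ideal $J_1$ generated by one-forms pulled back via $\pi$ from $\Gr(k,k+m)$; this ideal vanishes identically on $\pi^{-1}(Y)$ since $Y$ is constant there. Hence $\omega|_{\pi^{-1}(Y)}$ is unambiguous, and coincides with $\omega_{n,k,m}$.

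The main obstacles will be: (a) carefully checking the multi-vector Leibniz rule and fixing the overall sign to agree with the conventions of Lemma~\ref{lem:contrations} and Definition~\ref{def:omegafiber}; and (b) justifying that hypothesis~(i) (horizontality) together with~(ii)---rather than full $\GL_k$-invariance---suffices for the restriction to descend from $\mathrm{Mat}(k,n)$ to a well-defined top-form on $\pi^{-1}(Y)\subset\Gr(k,n)$, which is precisely what the $J_1$-ambiguity argument resolves; equivalently, one may always adjust $\omega$ within the $J_1$-class into a basic covariant form of weight $-(m+k)$ that descends globally without altering its restriction to the fiber.
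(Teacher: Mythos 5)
Your proposal is correct and follows essentially the same route as the paper: contracting hypothesis~(ii) with the multi-vector $v$ of \eqref{eq:fieldGL}, using hypothesis~(i) to kill all terms in which $\omega$ is contracted, and invoking Lemma~\ref{lem:contrations} to recover the defining identity \eqref{eq:form} before restricting to the fiber. Your extra care with the graded Leibniz rule and the $J_1$-ambiguity of the restriction simply makes explicit what the paper's shorter proof leaves implicit (via the remark following Definition~\ref{def:omegafiber} and the appeal to basic covariant forms).
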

\begin{proof}
Consider $v$ from \eqref{eq:fieldGL}. Then, by Lemma~\ref{lem:contrations}, property (ii) and the fact that $\iota_v \omega=0$, we have:
\begin{equation*}
 \mathbf{\Omega}(\Gr_+(k,n))  =\iota_v \mathbf{\Omega}(\Mat_+(k,n))=\omega \wedge \iota_v \mbox{d}^{k(k+m)} Y^*= \omega \wedge \mu^*_{\Gr(k,k+m)}.
\end{equation*}
Moreover, thanks to (i) and \eqref{eq:covariant},
$\omega$ is a basic covariant form on $\mbox{Mat}(k,n)$ which completes the proof.
\end{proof}

\begin{Theorem}\label{prop:omega}
The volume form $\omega_{n,k,m}$ on the fiber $\pi^{-1}(Y)$ of $\mathcal{A}_{n,k,m}$ is
 \begin{equation}\label{eq:omega}
  \omega_{n,k,m}= p_{J}(\lambda)^{-k} \frac{ \mu_{\Gr(k,n-m)}(\lambda)}{\prod_{i=1}^n \det \left(\lambda|(A^{I_i})^\perp \right)},
\end{equation}
where the fiber $\pi^{-1}(Y)$ is parametrized as in \eqref{eq:Propii} and $J \in \binom{[n]}{k}$ denotes the indices of the rows of the matrix $A$ in \eqref{eq:Propii} that span $Z^{-1}(Y) \backslash {\rm ker}\, Z$. 
\end{Theorem}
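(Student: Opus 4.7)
The plan is to invoke Proposition~\ref{prop:formfiberchar}: it suffices to exhibit a $k(n-m-k)$-form $\omega$ on $\Mat(k,n)$ satisfying (i) $\iota_v \omega = 0$ for every vector field $v$ tangent to $\GL_k$-orbits and (ii) $\mathbf{\Omega}(\Mat_+(k,n)) = \omega \wedge \mathrm{d}^{k(k+m)} Y^*$, whose restriction to the fiber $\pi^{-1}(Y)$ is the right-hand side of \eqref{eq:omega}. I would produce such an $\omega$ by computing the pullback of the canonical form under the parametrization of Proposition~\ref{prop:fiber}.

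First I would fix coordinates via $C = \lambda\cdot A(Y)$ with $A(Y)$ chosen as in \eqref{matrixA} of Remark~\ref{page:AandZ}, so that the first $n-m-k$ rows of $A$ span $\ker Z$ and the last $k$ rows depend linearly on $Y$; this makes $(\lambda, Y)\mapsto C$ a birational equivalence onto a dense open subset of $\Mat(k,n)$. The denominator of $\omega_{n,k,m}$ is then immediate: the computation in the proof of Proposition~\ref{prop:boundaries} shows that $p_{I_i}(C) = \det(\lambda | (A^{I_i})^\perp)$ for every cyclic $k$-interval $I_i$, matching the corresponding factors of $\mathbf{\Omega}(\Gr_+(k,n))$.

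The main step is the factorization of the Lebesgue form $\bigwedge_{\alpha,j}\mathrm{d}c_{\alpha j}$ under this change of variables. Expanding $\mathrm{d}C = \mathrm{d}\lambda\cdot A + \lambda\cdot \mathrm{d}A$, and using that $\mathrm{d}A$ has nonzero entries only in the $Y$-dependent block (and is linear in the $\mathrm{d}Y^*$), a block-triangular decomposition of the Jacobian yields, modulo terms of degree higher than $k(k+m)$ in $\mathrm{d}Y^*$ that vanish upon wedging with $\mathrm{d}^{k(k+m)}Y^*$,
$$\bigwedge_{\alpha,j} \mathrm{d}c_{\alpha j} \;=\; p_J(\lambda)^{-k}\,\Big(\bigwedge_{\alpha, l} \mathrm{d}\lambda_{\alpha l}\Big)\wedge \mathrm{d}^{k(k+m)} Y^*,$$
where $J$ indexes the $k$ rows of $A$ spanning a complement of $\ker Z$ in $Z^{-1}(Y)$, and $p_J(\lambda)^{-k}$ arises from inverting the $k\times k$ Jacobian block that translates these rows into the $Y$-coordinates. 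Dividing by $\prod_j p_{I_j}(C)$ and then replacing $\bigwedge_{\alpha,l}\mathrm{d}\lambda_{\alpha l}$ by $\mu_{\Gr(k,n-m)}(\lambda)$ (the two differ by contraction with the $\GL_k$-generating vector field of \eqref{eq:fieldGL}, which is consistent with condition (i) by Lemma~\ref{lem:contrations}) produces precisely $\omega\wedge \mathrm{d}^{k(k+m)}Y^*$ with $\omega$ of the claimed form.

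Condition (i) would be checked by weight counting: under $\lambda\mapsto g\lambda$ with $g\in\GL_k$, the factor $p_J(\lambda)^{-k}$ scales as $\det(g)^{-k}$, the measure $\mu_{\Gr(k,n-m)}(\lambda)$ as $\det(g)^{n-m}$, and $\prod_i \det(\lambda|(A^{I_i})^\perp)$ as $\det(g)^n$, for a total weight $-(m+k)$, matching the required covariance of $\omega$. The hardest part of the plan is organizing the Jacobian expansion so that all mixed terms are manifestly absorbed into the differential ideal generated by $\mathrm{d}Y^*$; adopting the affine chart $\widetilde{z}=\mathbb{I}_{m+k}$ of Remark~\ref{page:AandZ} makes this bookkeeping tractable, since then $A$ takes an explicit block form in which the $\lambda$-coordinates and $\mathrm{d}Y^*$ are cleanly separated.
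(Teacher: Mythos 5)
Your plan correctly reduces the problem to Proposition~\ref{prop:formfiberchar} and correctly identifies the denominator via $p_{I_i}(C)=\det(\lambda|(A^{I_i})^\perp)$ from Proposition~\ref{prop:boundaries}, but the two steps carrying the real content do not work as written. The displayed factorization $\bigwedge_{\alpha,j}\mathrm{d}c_{\alpha j}=p_J(\lambda)^{-k}\big(\bigwedge_{\alpha,l}\mathrm{d}\lambda_{\alpha l}\big)\wedge\mathrm{d}^{k(k+m)}Y^*$ is dimensionally inconsistent: the left side has degree $kn$, the right side degree $k(n-m)+k(k+m)=kn+k^2$. The excess $k^2$ is exactly the $\GL_k$ redundancy of the parametrization $(\lambda,Y)\mapsto C=\lambda\cdot A(Y)$ (one may replace $(\lambda_J,Y)$ by $(\lambda_J g^{-1},gY)$ without changing $C$), which your Jacobian bookkeeping never gauge-fixes or contracts away. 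If you do gauge-fix, say $\lambda_J=\mathbb{I}_k$, the change of variables $(\lambda_{\bar J},Y)\mapsto C$ is unimodular and the factor $p_J(\lambda)^{-k}$ is invisible on the slice (there $p_J\equiv 1$ and $\mu_{\Gr(k,n-m)}(\lambda)$ restricts to $\pm\,\mathrm{d}^{k\ell}\lambda_{\bar J}$), so recovering the covariant formula \eqref{eq:omega} is precisely the step you compress into ``replacing $\bigwedge_{\alpha,l}\mathrm{d}\lambda_{\alpha l}$ by $\mu_{\Gr(k,n-m)}(\lambda)$'' --- a substitution of a $k(n-m)$-form by a $k(n-m-k)$-form, which is not a legitimate operation. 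This is where the paper's actual work lies: the ansatz $\widetilde{\omega}=f(C)\bigwedge_\alpha\langle C\,A^\perp\,\mathrm{d}^{n-m-k}C_\alpha\rangle$, the normalization $\widetilde{\omega}\wedge\mathrm{d}^{k(m+k)}Y^*=\mathrm{d}^{kn}C$ forcing $f(C)=\langle C A^\perp Z^\perp\rangle^{-k}$, and the evaluation $f(\lambda\cdot A)\langle A A^\perp\rangle^{k}=p_J(\lambda)^{-k}$, which is the true origin of the exponent $-k$. Your heuristic that it comes from ``inverting the $k\times k$ Jacobian block'' is not substantiated: at fixed $\lambda$ one has $Y^*=\lambda_J\cdot Y$, so the naive Jacobian factor relating $\mathrm{d}^{k(k+m)}Y^*$ to $\mathrm{d}^{k(k+m)}Y$ is $\det(\lambda_J)^{k+m}$, not $\det(\lambda_J)^{k}$.

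Second, condition (i) of Proposition~\ref{prop:formfiberchar}, namely $\iota_v\omega=0$ for every $\GL_k$-generating field $v$, is not established by weight counting. Your scaling count verifies only the covariance condition \eqref{eq:covariant} of Definition~\ref{page:covariant}; horizontality is an independent requirement (there are forms of weight $-(m+k)$ with nonvanishing contractions), and it is essential both for the identity $\mathbf{\Omega}(\Mat_+(k,n))=\omega\wedge\mathrm{d}^{k(k+m)}Y^*$ to descend to the Grassmannian via Lemma~\ref{lem:contrations} and for the restriction to the fiber to be the well-defined form of Definition~\ref{def:omegafiber}. The paper checks it by the explicit Levi--Civita antisymmetrization of $\langle C\,A^\perp\,\mathrm{d}^{n-m-k}C_\alpha\rangle$; some such verification (or an equivalent argument that your candidate $\omega$ lies in the horizontal subspace) is unavoidable in your route as well and is currently missing.
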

\begin{proof}
Referring to Proposition~\ref{prop:formfiberchar},
we make the following ansatz for $\omega$:
\begin{equation}\label{eq:wtilde_decomposition}
    \omega=\frac{\widetilde{\omega}}{\prod_{i=1}^n p_{I_i}(C)}, \quad \text{with}\quad \widetilde{\omega}=f(C) \bigwedge_{\alpha=1}^k \langle C \, A^\perp \, \mbox{d}^{n-m-k} C_{\alpha} \rangle,
\end{equation}
where $A^\perp$ is the $m$-dim complement of the $(n-m)$-dim subspace in $\mathbb{R}^n$ spanned by the rows of $A$, and $f(C)$ is a function of $C$ of weight $-k$ which has to be fixed.
Let us prove that 
\begin{equation*}
    \iota_{v_{\alpha \beta}} \widetilde{\omega}=0 \quad\text{and}\quad  v_{\alpha \beta} := \sum_{i=1}^n c_{\alpha i} \frac{\partial}{\partial c_{\beta i}}\quad\text{for}\quad  \alpha,\beta=1, \ldots, k, 
\end{equation*}
where $v_{\alpha \beta}$ are the generators of $\GL_k$-orbits. We have:
\begin{equation*}
 \iota_{v_{\alpha \beta}} \langle C \, A^\perp \, \mbox{d}^{n-m-k} C_{\beta} \rangle = \sum_{i=1}^n \epsilon_{i_1 \cdots i_n} c_{1 i_1} \cdots   c_{k i_k} c_{\beta i} (A^{\perp})^{i_{k+1}}\cdots (A^{\perp})^{i_{k+m}} \mbox{d} c_{\beta i_{k+m+1}} \wedge \cdots  \wedge \widehat{\mbox{d} c_{\beta i}} \wedge \cdots \wedge \mbox{d} c_{\beta i_n},
\end{equation*}
which vanishes by anti-symmetrization of $c_{\beta i_{\beta}}$ and $c_{\beta i}$ via the Levi-Civita symbol.
Then the vanishing of $\iota_{v_{\alpha \beta}} \omega$ follows immediately. 
We now fix $f(C)$ by demanding that 
    $\widetilde{\omega} \wedge \mbox{d}^{k(m+k)}Y^*=\mbox{d}^{kn} C.$
Then:
\begin{eqnarray*}
 f(C)^{-1} \widetilde{\omega} \wedge \mbox{d}^{k(m+k)}Y^* &=& \bigwedge_{\alpha=1}^k \epsilon_{i_1 \cdots i_n} c_{1 i_1} \ldots   c_{k i_k} p_{i_{k+1} \cdots i_{k+m}}(A^\perp) p_{L}(Z) \, \mbox{d} c_{\alpha i_{k+m+1}} \wedge \cdots \wedge \mbox{d} c_{\alpha i_n} \bigwedge_{l \in L} \mbox{d} c_{\alpha l}
 \\
  &=&  \bigwedge_{\alpha=1}^k \epsilon_{i_1 \cdots i_n} \epsilon_{i_{k+m+1} \cdots i_n L} \, p_{i_1 \cdots i_k}(C) \, p_{i_{k+1} \cdots i_{k+m}}(A^\perp) \, p_{L}(Z) \, \bigwedge_{j=1}^n \mbox{d} c_{\alpha j}
  \\
  &=& \bigwedge_{\alpha=1}^k \langle C \, A^\perp Z^\perp \rangle\, \bigwedge_{j=1}^n \mbox{d} c_{\alpha j}=\langle C \, A^\perp Z^\perp \rangle^k\, \mbox{d}^{kn} C.
\end{eqnarray*}
Therefore: $f(C)= \langle C \, A^\perp Z^\perp \rangle^{-k}.$
Now, by restricting $\widetilde{\omega}$ on the fiber $\pi^{-1}(Y)$ we have:
\begin{eqnarray}\label{eq:w_tilde}
  \widetilde{\omega}|_{\pi^{-1}(Y)}=\langle (\lambda \cdot A) \, A^\perp Z^\perp \rangle^{-k} \,  \langle A \, A^\perp \rangle^{k} \bigwedge_{\alpha=1}^k \langle \lambda_{\alpha} \, \mbox{d}^{n-m-k} \lambda \rangle=p_{J}(\lambda)^{-k} \, \bigwedge_{\alpha=1}^k \langle \lambda_{\alpha} \, \mbox{d}^{n-m-k} \lambda \rangle,
\end{eqnarray}
where $J$ is the index set of rows of $A$ spanning $Z^{-1}(Y) \backslash \mbox{ker}\, Z$. Now \eqref{eq:w_tilde} and \eqref{eq:wtilde_decomposition} gives \eqref{eq:omega}, as desired.
\end{proof}

\begin{remark}
Note that it is easy to understand the factor $p_J(\lambda)$ in \eqref{eq:omega} as carrying the $\GL_k$-scaling which $Y$ comes with. In particular, for $\ell=n-m-k$ we choose
 $J=\lbrace \ell+1,\ldots,n-m \rbrace$, that is:
\begin{equation*}
    A=
\left(
\begin{array}{c}
 (Z^\perp)_{\ell \times n}\\
\hline
B_{k \times n}
\end{array}
\right),
\end{equation*}
where $B_{k \times n}$ is a matrix whose rows span the subspace $Z^{-1}(Y) \backslash \mbox{ker}\, Z$.
Then:
\begin{equation*}
 \lambda \cdot A \cdot Z=  \lambda \cdot  \left(
\begin{array}{c}
 \mathbb{O}_{\ell \times (m+k)}\\
\hline
(B \cdot Z)_{k \times (m+k)}
\end{array}
\right) = \lambda_{J} \cdot (B \cdot Z)_{k \times (m+k)},
\end{equation*}
where $\lambda_J$ is the submatrix obtained from $\lambda$ by considering its columns with indices in $J$.
Clearly, since the row-span of $B$ gives $Z^{-1}(Y) \backslash \mbox{ker}\, Z$, the row-span of $(B \cdot Z)_{k \times (m+k)}$ gives the row-span of $Y_{k \times (m+k)}$. Moreover, multiplication by $\lambda_J$ is just a $\GL_k$-transformation for this subspace.
\end{remark}
\begin{remark}\label{rk:formfiberdual}
We observe that, by duality on Grassmannians, we can equivalently describe the volume form $\omega_{n,k,m}$ on the fiber $\pi^{-1}(Y)$ of the amplituhedron $\mathcal{A}_{n,k,m}$ as a volume form on $\Gr(\ell,n-m)$ for $\ell=n-m-k$, instead of $\Gr(k,n-m)$. Given $\lambda \in \Gr(k,n-m)$, we denote $\bar{\lambda} \in \Gr(\ell,n-m)$ as its dual. Then the fiber volume form in \eqref{eq:omega} can be expressed as:
\begin{equation*}
    \omega_{n,k,m}= p_{\bar{J}}(\bar{\lambda})^{-k} \frac{ \bigwedge_{\bar{\alpha}=1}^{\ell} \langle \bar{\lambda} \, \mbox{d}^{k} \bar{\lambda}_{\bar{\alpha}} \rangle}{\prod_{i=1}^n \det \left(\bar{\lambda}|A^{I_i} \right)}.
\end{equation*}
We recall that $p_I(\lambda)=\epsilon_{I \bar{I}} p_{\bar{I}}(\bar{\lambda})$ and $\bigwedge_{\alpha=1}^{k} \langle \lambda \, \mbox{d}^{\ell} \lambda_{\alpha} \rangle= \bigwedge_{\bar{\alpha}=1}^{\ell} \langle \bar{\lambda} \, \mbox{d}^{k} \bar{\lambda}_{\bar{\alpha}} \rangle$. Moreover, we have:
\begin{equation*}
    \mbox{det} (\lambda | {(A^{I_i})}^\perp)=\sum_{J \in \binom{[n]}{k}} p_J(\lambda) A^{I_i}_J=\sum_{J \in \binom{[n]}{k}} \epsilon_{J \bar{J}} p_{\bar{J}}(\bar{\lambda}) A^{I_i}_J=\det \left(\bar{\lambda}|A^{I_i} \right).
\end{equation*}
\end{remark}

\begin{remark}\label{rk:oldpaper}
We would like to highlight that the definition of the \emph{fiber volume form} given in \eqref{eq:form} is natural from a geometric perspective, once we fix a volume form on the base $\mu^*_{\Gr(k,k+m)}$.
In \cite{ferro2018amplituhedron}, a differential form denoted as $\omega^{(m)}_{n,k}$ was formally defined as: 
\begin{equation}\label{olddef}
    \omega^{(m)}_{n,k}=\int \frac{\mbox{d}^{k \times n} C}{\prod_{i=1}^n p_{I_i}(C)} \delta^{k(m+k)}(Y-C \cdot Z).
\end{equation}
This can be rigorously written as the residue (see Definition \ref{def:residue}) of $\mathbf{\Omega}(\mbox{Mat}_+(k,n))$ as:
\begin{eqnarray*}
  \omega^{(m)}_{n,k}=\mbox{Res}_{Y=C \cdot Z} \, \frac{\mathbf{\Omega}(\mbox{Mat}_+(k,n))}{Y-C \cdot Z},
\end{eqnarray*}
where $Y-C\cdot Z$ denotes the function given by the product of all $k(m+k)$ entries of the matrix $Y-C\cdot Z$.
If we have a $k(n-m-k)$ form $\omega$ on $\mathbb{R}^{kn}$ as in Proposition~\ref{prop:formfiberchar}, then we have:
\begin{equation*}
    \mbox{Res}_{Y=C \cdot Z} \, \frac{\mathbf{\Omega}(\mbox{Mat}_+(k,n))}{Y-C \cdot Z}= \mbox{Res}_{Y=C \cdot Z} \, \left( \omega \wedge \frac{\mbox{d}^{k(m+k)}(Y-C \cdot Z)}{Y-C \cdot Z} \right)=\omega|_{Y=C \cdot Z}=\omega_{n,k,m}.
\end{equation*}
Here we used the definition of Poincar\'e residue and the fact that
$$
\mbox{d}^{k(m+k)}Y^*=\mbox{d}^{k(m+k)}(C \cdot Z)=\mbox{d}^{k(m+k)}(Y-C \cdot Z),
$$ 
since $Y$ is fixed.
Therefore we can see that, thanks to Proposition~\ref{prop:formfiberchar}, the form in \eqref{olddef} coincides with the fiber volume form $\omega_{n,k,m}$ which was defined in an invariant way in Definition~\ref{def:omegafiber}. 
\end{remark}

\begin{remark}
In \cite{Benincasa:2020uph}, the notion of \emph{covariant restriction} of a \emph{covariant form} on a subspace was introduced in the context of positive geometries with non-logarithmic singularities. In that language, $\omega$ in Definition~\ref{def:omegafiber}  is the covariant restriction of $\mathbf{\Omega}(\mbox{Gr}_+(k,n))$ on the fiber $\pi^{-1}(Y)$. More precisely, using Theorem~\ref{prop:omega}, $\omega$ is a covariant form of degree $-(m+k)$ which is the covariant restriction of $\mathbf{\Omega}(\mbox{Mat}_+(k,n))$ on the $k \ell$-dim subspace $\{C \in \mbox{Mat}(k,n): C \cdot Z =Y \}$.
\end{remark}

In the following, we explicitly compute the fiber volume forms for polytopes and their conjugates, and show that they only have simple poles on some hyperplane arrangement. 

\begin{Corollary}
\label{cor:volumefibers}
Let $\ell=n-m-1$ and denote $\omega_{n,1,m}$ and $\omega_{n,\ell,m}$ for the volume forms of the fibers in the cases of cyclic polytopes and conjugate to polytopes. Then $\omega_{n,1,m}$ and $\omega_{n,\ell,m}$ are volume forms with simple poles on affine hyperplanes arrangements. In particular, there is a choice of coordinates $\lambda_1,\ldots,\lambda_{\ell}$ and $\bar{\lambda}_1,\ldots,\bar{\lambda}_{\ell}$ on the respective fibers such that:
\begin{equation}\label{eq:volumefiberspolconjpol}
 \omega_{n,1,m}=\frac{ {\rm d} \lambda_1 \wedge \cdots \wedge {\rm d} \lambda_{\ell}}{\prod_{i=1}^n \left( \lambda \cdot A^i \right)} \quad\text{and}\quad   \omega_{n,\ell,m}=\frac{{\rm d} \bar{\lambda}_1 \wedge \cdots \wedge {\rm d} \bar{\lambda}_{\ell}}{\prod_{i=1}^n  \bar{\lambda} \cdot {(A^{I_i})}^\perp},
\end{equation}
where $\{A^i\}_{i \in [n]}$ and $\{{(A^{I_i})}^\perp \}_{i \in [n]}$ are the normal vectors of the affine hyperplanes in the respective cases. 
\end{Corollary}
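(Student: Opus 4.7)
The plan is to derive both formulas as immediate specializations of Theorem~\ref{prop:omega} (and its dual reformulation in Remark~\ref{rk:formfiberdual}), using that $k=1$ forces the fiber to be a projective space $\Gr(1,n-m)\cong\mathbb{P}^{n-m-1}$ in the polytope case, and that the parity dual $\bar\lambda\in\Gr(\ell,n-m)=\Gr(1,n-m)$ plays the same role in the conjugate case.

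For the polytope case $k=1$, I would start by substituting into the formula of Theorem~\ref{prop:omega}. Since the cyclic $1$-intervals are singletons $I_i=\{i\}$, one has $A^{I_i}=A^i$, a single column of the matrix $A$ viewed as a vector in $\mathbb{R}^{n-m}$, and $(A^i)^\perp$ is a hyperplane in $\mathbb{R}^{n-m}$. Expanding $\det(\lambda\,|\,(A^i)^\perp)$ along the row $\lambda$ shows that this determinant is a non-zero scalar multiple of the linear functional $\lambda\mapsto\lambda\cdot A^i$ (the scalar depends on the chosen basis of $(A^i)^\perp$, and can be absorbed into a global normalization of $A$). Next, I choose an affine chart of $\mathbb{P}^{n-m-1}$ by setting $\lambda_J=1$ for the index $J\in\binom{[n-m]}{1}$ selected in Theorem~\ref{prop:omega}; this trivializes the covariant prefactor $p_J(\lambda)^{-1}$, and the standard measure $\mu_{\Gr(1,n-m)}$ restricts in this chart to $d\lambda_1\wedge\cdots\wedge d\lambda_\ell$ (with $d\lambda_J$ omitted and $\ell=n-m-1$). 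Combining these observations gives exactly the first formula of \eqref{eq:volumefiberspolconjpol}, and since every denominator factor is linear in $\lambda$ the poles form an affine hyperplane arrangement.

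For the conjugate case $k=\ell=n-m-1$, the natural move is to switch to the dual Grassmannian picture of Remark~\ref{rk:formfiberdual}. There the fiber is described by $\bar\lambda\in\Gr(n-m-k,\,n-m)$, which for $k=\ell$ becomes $\Gr(1,n-m)\cong\mathbb{P}^{n-m-1}$; the denominator $\det(\lambda\,|\,(A^{I_i})^\perp)$ is rewritten as $\det(\bar\lambda\,|\,A^{I_i})$, which is again (up to a constant) the linear functional $\bar\lambda\mapsto\bar\lambda\cdot(A^{I_i})^\perp$ in the sense of the corollary statement, where $(A^{I_i})^\perp$ denotes a chosen normal vector to the $(n-m-1)$-dim subspace spanned by the columns of $A$ indexed by the cyclic interval $I_i$. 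Choosing an affine chart of $\mathbb{P}^{n-m-1}$ trivializes the covariant factor $p_{\bar J}(\bar\lambda)^{-k}$ exactly as in the polytope case, and the volume form on $\Gr(1,n-m)$ becomes $d\bar\lambda_1\wedge\cdots\wedge d\bar\lambda_\ell$, giving the second formula of \eqref{eq:volumefiberspolconjpol}.

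The calculation is essentially bookkeeping; the only point requiring care is verifying that the various proportionality constants between $\det(\lambda\,|\,(A^i)^\perp)$ (respectively $\det(\bar\lambda\,|\,A^{I_i})$) and the linear functionals $\lambda\cdot A^i$ (respectively $\bar\lambda\cdot(A^{I_i})^\perp$) can be absorbed consistently into the choice of bases and of affine chart, so that no extraneous factors remain in the final expressions. Once this is done, the conclusion that $\omega_{n,1,m}$ and $\omega_{n,\ell,m}$ have only simple poles along the hyperplane arrangements $\{\lambda\cdot A^i=0\}_{i\in[n]}$ and $\{\bar\lambda\cdot(A^{I_i})^\perp=0\}_{i\in[n]}$ is immediate from the explicit formulas, since every denominator factor is a non-repeated affine-linear form in the chosen coordinates.
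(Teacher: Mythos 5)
Your proposal is correct and follows essentially the same route as the paper: specialize Theorem~\ref{prop:omega} at $k=1$, pass to the dual description of Remark~\ref{rk:formfiberdual} for $k=\ell$, and choose the affine charts $\lambda_J=1$ (resp.\ $\bar{\lambda}_{\bar J}=1$) so that the covariant prefactor disappears and the Grassmannian measure reduces to the coordinate volume form. The only cosmetic difference is that, with the paper's conventions (the identity $\det(\lambda|(A^{I})^\perp)=\sum_J p_J(\lambda)\,p_J(A^{I})$ from Proposition~\ref{prop:boundaries} together with Remark~\ref{rk:formfiberdual}), the denominator factors are literally $\lambda\cdot A^i$ and $\bar{\lambda}\cdot (A^{I_i})^\perp$, so no proportionality constants actually need to be absorbed.
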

\begin{proof}
For $k=1$, the differential form \eqref{eq:omega} is just the following form on $\mathbb{P}^{n-m-1}$:
\begin{equation*}
    \omega_{n,1,m}=\frac{\langle \lambda \mbox{d}^{n-m-1} \lambda \rangle}{\lambda_j \, \prod_{i=1}^n \left( \lambda \cdot A^i \right)},
\end{equation*}
where the $j^{\rm th}$ row of $A$ spans $Z^{-1}(Y) \backslash {\rm ker}\, Z$. Without loss of generality, we fix $j=n-m$, compatible with the choice in \eqref{matrixA}.
Choosing the chart $\lambda=(\lambda_1,\ldots, \lambda_{n-m-1},1)$, this simply becomes:
\begin{equation*}\label{eq:formpol}
     \omega_{n,1,m}=\frac{ \mbox{d} \lambda_1 \wedge \cdots \wedge \mbox{d} \lambda_{n-m-1}}{\prod_{i=1}^n \left( \lambda \cdot A^i \right)}.
\end{equation*}
As in Remark~\ref{rk:formfiberdual}, for the conjugate to polytopes it is better to express the fiber volume form in the dual space. Doing so, we have that $\Gr(n-m-\ell,n-m)=\Gr(1,n-m) \cong \mathbb{P}^{n-m-1}$. Then:
\begin{equation*}
\omega_{n,\ell,m}=\frac{  \langle \bar{\lambda} \, {\rm d}^{\ell} \bar{\lambda} \rangle}{\bar{\lambda}_{\bar{J}}^{\ell} \, \prod_{i=1}^n \bar{\lambda} \cdot {(A^{I_i})}^\perp},
\end{equation*}
where $\bar{\lambda} \in \mathbb{P}^{\ell}$ and $\bar{J}$ is the only index in $[n-m] \backslash J$.  
Moreover, $A^{I_i}$ is an $\ell$-dim space in $\mathbb{R}^{n-m}$, and so $(A^{I_i})^\perp$ is $1$-dim.
Without loss of generality, we fix $J=\lbrace 2, \ldots, n-m \rbrace$, compatible with the choice in \eqref{matrixA}. So $\bar{J}=\{1\}$.
Choosing the chart $\bar{\lambda}=(1,\bar{\lambda}_2,\ldots, \bar{\lambda}_{n-m})$, the fiber volume form simply becomes:
\begin{equation*}\label{formconjpol}
    \omega_{n,\ell,m}=\frac{\mbox{d} \bar{\lambda}_2 \wedge \cdots \wedge \mbox{d} \bar{\lambda}_{n-m}}{\prod_{i=1}^n  \bar{\lambda} \cdot {(A^{I_i})}^\perp}.
\end{equation*}
Now, the assertions in \eqref{eq:volumefiberspolconjpol} follow immediately by relabeling the coordinates on the fibers.  
\end{proof}

\subsection{Triangulations and fiber volume forms} \label{subsec:fibervolformtriang}
Here we show that the canonical form of the amplituhedron can be computed adopting an alternative approach based on its fibers. 
In particular, we use the residues of the fiber volume form Definition~\ref{def:omegafiber} to compute the canonical function of the amplituhedron; See Theorem~\ref{thm:resfiber}.

\medskip
We first recall further nice properties of positroid cells as follows:
\begin{Conjecture}[{\cite[Conjecture 8.4]{galashin2018parity}}] \label{conjtriangles}
Let $S_M$ be a $km$-dim positroid cell of $\Gr_+(k,n)$. Then the following are equivalent:
\begin{itemize}
\item[\rm{(i)}] $\pi_+: S_M \rightarrow \pi_+(S_M)$, i.e.~the restriction of $\pi_+$ to $S_M$ is injective.

\item[\rm{(ii)}] $\pi: \overline{S_M} \dashrightarrow
\Gr(k,\mathbb{C}^{k+m})$ is a birational map, with $\dim(\overline{S_M})=\dim(\pi(\overline{S_M}))=mk$.
\end{itemize}
In each case, $\pi_+: S_M \rightarrow \pi_+(S_M)$ is a diffeomorphism. 
\end{Conjecture}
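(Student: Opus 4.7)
The plan is to analyze both implications using the fiber parametrization of Proposition~\ref{prop:fiber}. For each $Y \in \pi(\overline{S_M})$, the complex fiber $\pi^{-1}(Y) \cap \overline{S_M}$ consists of matrices $V = \lambda \cdot A$ with $\lambda \in \Gr(k,n-m)$ satisfying the positroid conditions $p_I(\lambda\cdot A) = 0$ for $I \notin M$ and $p_I(\lambda\cdot A)\neq 0$ for $I\in M$; the positive fiber $\pi_+^{-1}(Y)\cap S_M$ is cut out by the corresponding positive conditions. Both parts of the equivalence then reduce to controlling the degree of this algebraic system.

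For the direction (ii) $\Rightarrow$ (i), if $\pi|_{\overline{S_M}}$ is birational then its generic complex fiber is a single reduced point, so the differential of $\pi_+$ is an isomorphism on a Zariski-open subset $U \subset S_M$. On $U$ the map $\pi_+$ is a local diffeomorphism between real manifolds of equal real dimension $km$. Since $S_M$ is a topological cell by \cite{postnikov2006total,PSW}, hence connected and simply connected, and since $\pi$ is proper on the compact set $\overline{S_M}$, a covering-space argument upgrades this to a global diffeomorphism $\pi_+ \colon S_M \to \pi_+(S_M)$, which is exactly (i).

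For the converse (i) $\Rightarrow$ (ii), injectivity and the equality of real dimensions forces $\pi_+$ to be a local diffeomorphism on an open dense subset, so the complex extension satisfies $\dim \overline{S_M} = \dim \pi(\overline{S_M}) = km$ and the restricted rational map is generically finite of some degree $d \geq 1$. The task is to prove $d = 1$. I would argue that any spurious complex preimage $V' \in \overline{S_M}\setminus S_M$ of a generic $Y \in \pi_+(S_M)$ would, under a generic real path from $Y$ to a boundary point of $\pi_+(S_M)$, either have to cross into $S_M$ (contradicting injectivity) or collapse on one of the boundary divisors $\mathcal{B}_i$ of Proposition~\ref{prop:boundaries}. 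The latter alternative can be ruled out using irreducibility and normality of positroid varieties together with compatibility of $\pi$ with complex conjugation, which pairs non-real preimages and thus forces their number to be even.

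The main obstacle is exactly this last step: extracting degree one for the complex map from a purely real injectivity statement. This is a Galois/monodromy question that lies deeper than the dimensional arguments above and likely requires either a stratification of $\pi(\overline{S_M})$ compatible with the Gale-type combinatorics of Proposition~\ref{prop:boundaries}, or a cluster-algebraic decomposition of $\pi|_{\overline{S_M}}$ into birational moves. I would expect a successful proof to first handle $k = 1$ (polytopes) and the conjugate-to-polytope case, where \S\ref{sec:linear} already supplies a linear model in which $d = 1$ is immediate, and then to lift to arbitrary $k$ and even $m$ via the parity duality of Theorem~\ref{th:parityduality}.
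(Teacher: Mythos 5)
You are attempting to prove something that the paper itself does not prove: the statement you are working on is stated as Conjecture~\ref{conjtriangles}, imported verbatim from \cite[Conjecture 8.4]{galashin2018parity}, and the paper only records that it is known in the special cases $k=1$ (cyclic polytopes) and $k=n-m-1$ (conjugate to polytopes), then uses it as a hypothesis (e.g.\ in Theorem~\ref{thm:resfiber}). So there is no proof in the paper to compare against, and your proposal should be judged as an attempt at an open problem.

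As such an attempt, it has genuine gaps, the main one being the one you name yourself: in the direction (i) $\Rightarrow$ (ii) you must pass from injectivity of $\pi_+$ on the \emph{real positive} part $S_M$ to degree one of the \emph{complex} rational map $\pi$ on $\overline{S_M}$, and the monodromy/conjugation argument you sketch does not close this. Compatibility with complex conjugation only pairs up non-real preimages of a real $Y$, so it constrains the parity of the number of non-real sheets; it does not exclude, say, one positive real preimage together with several real preimages lying in $\overline{S_M}\setminus \Gr_{\geq}(k,n)$, nor does it control what happens as sheets collide over the divisors $\mathcal{B}_i$ of Proposition~\ref{prop:boundaries} — your path-deformation step ("cross into $S_M$ or collapse on a boundary divisor") is precisely the hard content and is asserted rather than proved. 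The converse direction also has a soft spot: birationality gives that the generic complex fiber is a single reduced point, hence injectivity of $\pi_+$ only on a dense open subset of $S_M$; your covering-space upgrade needs $\pi_+$ restricted to $S_M$ to be proper onto its image and a local diffeomorphism everywhere on $S_M$ (not just generically), and needs control of the topology of $\pi_+(S_M)$ itself, none of which follows from properness of $\pi$ on the compact closure $\overline{S_M}$. Finally, the proposed reduction "handle the linear-fiber cases and lift via parity duality" cannot reach general $(k,m)$: Theorem~\ref{th:parityduality} only exchanges $k \leftrightarrow n-m-k$, so it maps the known cases $k=1$ and $k=n-m-1$ onto each other and gives nothing new for intermediate $k$ (e.g.\ $\mathcal{A}_{6,2,2}$ of Example~\ref{exam:non-linear}, whose fibers are cut out by quadrics and where no linear model is available).
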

This conjecture holds true for $k=1$ (polytopes) and $k=n-m-1$ (conjugate to polytopes).
Note that given a triangulation of $\mathcal{A}_{n,k,m}$, by definition, the property (i) above holds. The property (ii) is used to relate the canonical form of $\mathcal{A}_{n,k,m}$ to its triangulations and the fiber volume form $\omega_{n,k,m}$.

\begin{Theorem} \label{thm:resfiber}
Let $\omega_{n,k,m}(Y)$ be the volume form on the fiber $\pi^{-1}(Y)$ for $Y \in \mathcal{A}_{n,k,m}$. Then:

\begin{itemize}
    \item[{\rm(i)}] Every generalized triangle $S_M$ of $\mathcal{A}_{n,k,m}$ corresponds to a pole $q_M$ of the fiber volume form $\omega_{n,k,m}$, where $q_M=\pi^{-1}(Y) \cap \overline{S_M}$.

    \item[{\rm (ii)}] The residue of the fiber volume form $\omega_{n,k,m}$ around the pole $q_M$ computes the canonical function of the corresponding generalized triangle $S_M$ as:
    \begin{equation*}
\Omega(S_M)(Y) = {\rm Res}_{q_M} \omega_{n,k,m}(Y).
    \end{equation*}
    In particular, given a triangulation $\mathcal{C}=\lbrace S_M \rbrace$ of $\mathcal{A}_{n,k,m}$, the canonical function is given by:
\begin{equation*}
    \Omega(\mathcal{A}_{n,k,m})(Y) = \sum_{S_M\in \mathcal{C}} {\rm Res}_{q_M} \omega_{n,k,m}(Y).
\end{equation*}
\end{itemize}
\end{Theorem}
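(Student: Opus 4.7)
My plan is to leverage the factorization $\mathbf{\Omega}(\Gr_+(k,n)) = \omega \wedge \pi^{*}(\mu_{\Gr(k,k+m)})$ from Definition~\ref{def:omegafiber}, together with the characterization of the canonical form of a positroid cell as an iterated residue of $\mathbf{\Omega}(\Gr_+(k,n))$ along boundary divisors. The idea is to reduce both claims to a local computation in coordinates adapted to the fibration $\pi$, where the residue naturally splits into a ``fiber part'' and a ``base part''.

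For part (i), I would fix a generalized triangle $S_M$ with $\dim S_M = km$ and take $Y$ generic in $\overline{\pi_+(S_M)}$. By Conjecture~\ref{conjtriangles}(ii), the map $\pi : \overline{S_M} \dashrightarrow \overline{\pi(S_M)}$ is birational between varieties of dimension $km$, so for generic $Y$ the intersection $q_M := \pi^{-1}(Y) \cap \overline{S_M}$ consists of a single point. A dimension count gives $\dim \pi^{-1}(Y) + \dim S_M = k(n-m-k) + km = k(n-k) = \dim \Gr(k,n)$, so the intersection is expected to be transverse and $0$-dimensional. Moreover, $q_M$ lies on exactly those boundary divisors $\{p_{I_i}=0\}$ for which the cyclic interval $I_i$ is not in $M$. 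By Proposition~\ref{prop:boundaries} and the explicit formula~\eqref{eq:omega}, these vanishing cyclic Pl\"uckers correspond to vanishing factors $\det(\lambda|(A^{I_i})^{\perp})$ in the denominator of $\omega_{n,k,m}$, exhibiting $q_M$ as a transversal intersection of polar hyperplanes of $\omega_{n,k,m}$ and hence as an isolated pole.

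For part (ii), I would choose local coordinates $(y,\lambda)$ near $q_M$ adapted to the fibration $\pi$, where $y$ parametrises the base $\Gr(k,k+m)$ via $\pi$ and $\lambda$ the fiber. In such coordinates $\pi^{*}(\mu_{\Gr(k,k+m)})$ involves only $dy$ and is regular along every boundary divisor meeting $q_M$; by the birationality in Conjecture~\ref{conjtriangles}(ii), these divisors are transverse to the fiber at $q_M$ and can be completed to local coordinate hyperplanes in $\lambda$. The iterated Poincar\'e residue that produces $\mathbf{\Omega}(S_M)$ from $\mathbf{\Omega}(\Gr_+(k,n))$ therefore splits as
\[
\mathrm{Res}_{S_M}\mathbf{\Omega}(\Gr_+(k,n))\big|_{Y} \;=\; \bigl(\mathrm{Res}_{q_M}\omega_{n,k,m}(Y)\bigr)\cdot \mu_{\Gr(k,k+m)}(Y).
\]
Identifying the left hand side with $\mathbf{\Omega}(\pi_+(S_M))(Y)$ through the diffeomorphism of Conjecture~\ref{conjtriangles}(i) and comparing with Definition~\ref{def:canonical_form} yields $\Omega(S_M)(Y)=\mathrm{Res}_{q_M}\omega_{n,k,m}(Y)$. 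The summation formula across a triangulation $\mathcal{C}$ then follows from Conjecture~\ref{conj:canformfromtriang}, since summing residues over the points $q_M$, one for each generalized triangle in $\mathcal{C}$, reconstructs the canonical function of $\mathcal{A}_{n,k,m}$.

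The main obstacle will be to rigorously establish the residue decomposition displayed above. This requires verifying two transversality facts in adapted local coordinates: first, that the $k(n-m-k)$ boundary divisors of $S_M$ passing through $q_M$ are independent in the fiber direction, so they can serve as $k(n-m-k)$ of the local coordinates on $\Gr(k,n)$ complementary to the pulled-back base coordinates; and second, that $\pi^{*}(\mu_{\Gr(k,k+m)})$ contributes no additional singularities along these divisors, which is immediate from its construction. Both steps are local and hinge on the birationality of $\pi|_{\overline{S_M}}$, but making the book-keeping clean — particularly ensuring the residue signs match the orientation convention of Remark~\ref{rk:orientation2} — is the delicate part of the argument.
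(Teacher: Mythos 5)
Your proposal is correct and follows essentially the same route as the paper: part (i) via the birationality of $\pi|_{\overline{S_M}}$ from Conjecture~\ref{conjtriangles}(ii) giving a unique intersection point $q_M$ on the polar locus, and part (ii) via the factorization $\mathbf{\Omega}(\Gr_+(k,n))=\omega\wedge\pi^*(\mu_{\Gr(k,k+m)})$, the splitting $\mathrm{Res}_{S_M}\mathbf{\Omega}(\Gr_+(k,n))=(\mathrm{Res}_{S_M}\omega)\wedge\pi^*(\mu_{\Gr(k,k+m)})$, and summation over the triangulation through Conjecture~\ref{conj:canformfromtriang} with signs fixed as in Remark~\ref{rk:orientation2}. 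The transversality bookkeeping you flag as the delicate point is handled in the paper simply by noting that $\pi^*(\mu_{\Gr(k,k+m)})$ restricts to zero on the fiber, which is all that is needed for the residue to split.
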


\begin{proof}
(i) We recall that by Definition~\ref{def:triangulations} for every generalized triangle the map $\pi^+$ is injective on $S_M$. Moreover, by the property (ii) in Conjecture~\ref{conjtriangles}, for every generalized triangle $S_M$ the map $\pi: \overline{S_M} \dashrightarrow  \Gr(k,\mathbb{C}^{k+m})$ is birational. Hence, there exists a Zariski open subset $U$ of $\Gr(k,\mathbb{C}^{k+m})$ such that for any $Y\in U$ there exists a unique point $q_M$ with $\pi^{-1}(Y)\cap \overline{S_M}=q_M$. Hence, the volume form $\omega_{n,k,m}$ on the fiber $\pi^{-1}(Y)$ has poles at the points $q_M \in \Gr(k,\mathbb{C}^{k+m})$ which are the intersection between the full fiber $\pi^{-1}(Y)$ and the Zariski closure $\overline{S_M}$ of any generalized triangle $S_M$.

\medskip

(ii) Consider a triangulation $\mathcal{C}=\lbrace S_M \rbrace$ of $\mathcal{A}_{n,k,m}$. Then by~\eqref{eq:AHformTriang} we have:
\begin{equation*}
    \mathbf{\Omega}(\mathcal{A}_{n,k,m})(Y) = \sum_{S_M\in \mathcal{C}} {((\pi|_{S_M})^{-1})}^*\mathbf{\Omega}(S_M)(Y)= \sum_{S_M\in \mathcal{C}} {((\pi|_{S_M})^{-1})}^*\mbox{Res}_{S_M}
    \mathbf{\Omega}(\Gr_+(k,n))(Y).
\end{equation*}    
But since $\mathbf{\Omega}(\Gr_+(k,n))=\omega\wedge  \pi^*(\mu_{\Gr(k,k+m)})$ and $\mu^*_{\Gr(k,k+m)}|_{\pi^{-1}(Y)}=0$ we have:
\[
 \mbox{Res}_{S_M}\mathbf{\Omega}(\Gr_+(k,n)) = (\mbox{Res}_{S_M}\omega) \wedge \pi^*(\mu_{\Gr(k,k+m)}).
\]
\vspace{-2mm}Therefore, 
\begin{eqnarray*}
\sum_{S_M\in \mathcal{C}} {((\pi|_{S_M})^{-1})}^*\mbox{Res}_{S_M}\mathbf{\Omega}(\Gr_+(k,n))(Y) &=& \sum_{S_M\in \mathcal{C}} \mbox{Res}_{q_M} \omega_{n,k,m}(Y) \, \mu_{\Gr(k,k+m)}(Y)\\ &=&\Omega(\mathcal{A}_{n,k,m})(Y) \, \mu_{\Gr(k,k+m)}(Y),
\end{eqnarray*}
where  $q_M=\overline{S_M} \cap \pi^{-1}(Y)$. Similar to Remark~\ref{rk:orientation2}, the sign of $\mbox{Res}_{q_M} \omega_{n,k,m}(Y)$ above is taken such that 
\begin{equation*}
\mbox{Res}_{q_M} \omega_{n,k,m}(Y) > 0 \text{ for any } Y\in \pi_+(S_M). \tag*{\qedhere}
\end{equation*}
\end{proof}

\begin{example}[Amplituhedron with non-linear fibers]\label{exam:non-linear}
We consider the first non-trivial non-linear example, i.e.~$(n,k,m)=(6,2,2)$. Let $J=\{3,4\}$ and choose a chart where $\lambda_{34}=1$. Then \eqref{eq:omega} reads as: 
\begin{equation}\label{ex:non-linear}
  \omega_{6,2,2}=\frac{\mbox{d}\lambda_{11} \wedge \mbox{d} \lambda_{12} \wedge \mbox{d} \lambda_{21} \wedge \mbox{d} \lambda_{22} }{\prod_{i=1}^6 \det \left(\lambda|(A^{I_i})^\perp \right)}  \quad\text{for}\quad \lambda= \left(
\begin{array}{cccc}
\lambda_{11}  & \lambda_{12} & 1 & 0\\
\lambda_{21}  & \lambda_{22} & 0 & 1
\end{array}
\right).
\end{equation}
We observe that the denominator vanishes on the locus of $6$ quadrics $\mathcal{Q}_i$ in $\mathbb{P}^4$ given by:
\begin{equation*}
    \Lambda \cdot M_i \cdot \Lambda^t=0\quad\text{for}\quad M_i = 
\begin{pmatrix}
  a & \vline & \beta \\
\hline
\beta^t & \vline &
  \begin{matrix}
  0 & 0 & 0 & b \\
    0 & 0 & -b & 0 \\
      0 & -b & 0 & 0 \\
        b & 0 & 0 & 0 \\
  \end{matrix}
\end{pmatrix} \quad\text{and}\quad \Lambda=(1 \, \lambda_{11} \,  \lambda_{12} \,  \lambda_{21} \,  \lambda_{22} \,),
\end{equation*}
where $a,b \in \mathbb{R}, \,\beta \in \mathbb{R}^4$, and $i \in [6]$. The generalized triangles of $\mathcal{A}_{6,2,2}$ correspond to $48$ intersection points of these quadrics. Considering all collections $I$ of $4$ quadrics, $30$ intersection points arise as: 
\begin{equation*}
    \bigcap_{i \in I} \mathcal{Q}_i \supseteq \lbrace q_I^{(1)}, q_I^{(2)}\rbrace.
\end{equation*}
The remaining $18$ points come from collections of $3$ quadrics such that their intersection is a $1$-dim curve with self-intersections. This happens for the following cases:
\begin{equation*}
    \mathcal{Q}_i \cap \mathcal{Q}_{i+1} \cap \mathcal{Q}_j \supseteq \lbrace q_{i,i+1,j} \rbrace, \quad i, j \in [6],\ i+1<j.
\end{equation*}
There are $18$ points of this type.
In order to better describe this phenomenon, consider as a toy model 
the surfaces defined as:
\begin{equation*}
    \mathcal{S}_1=\lbrace z=0 \rbrace, \, \mathcal{S}_2=\lbrace z+xy=0 \rbrace,\, \mathcal{S}_3=\lbrace x-1=0 \rbrace.
\end{equation*}
Then we have:
\vspace{-3mm}\begin{equation*}
     \mathcal{S}_1 \cap  \mathcal{S}_2= \lbrace x,z =0 \rbrace \cup \lbrace y,z =0 \rbrace
\end{equation*}
which has a self-intersection at the point $(x,y,z)=(0,0,0)$. Whereas $  \mathcal{S}_1,  \mathcal{S}_2, \mathcal{S}_3$ intersect at the point $(x,y,z)= (1,0,0)$. We can then associate both the points $(1,0,0)$ and $(0,0,0)$ to the configuration of surfaces $\lbrace \mathcal{S}_1, \mathcal{S}_2, \mathcal{S}_3 \rbrace$.
Therefore, all together we have $48$ intersection points $\lbrace q_I^{(1)}, q_I^{(2)}, q_{i,i+1,j} \rbrace$.

Contrary to the configuration of hyperplanes, there is no natural fan associated to a configuration of quadrics $\lbrace \mathcal{Q}_i \rbrace$.
From \cite{lukowski2020positive}, we know that there are $120$ finest dissections of $\mathcal{A}_{6,2,2}$ and exactly $48$ good triangulations. The latter can be obtained as cyclic rotations of $11$ of them. We list below one representative of each cyclic class of good triangulations in terms of the corresponding intersection points:
\begin{eqnarray*}
  \{q_{126},q_{1246}^{(1)},q_{1456}^{(1)},q_{1234}^{(2)},q_{1345}^{(2)},q_{345}\}, &\quad & \{q_{126},q_{1456}^{(2)},q_{3456}^{(1)},q_{1246}^{(2)},q_{1234}^{(1)},q_{2345}^{(2)}\},\\
  \{q_{126},q_{1246}^{(1)},q_{1456}^{(1)},q_{1234}^{(2)},q_{3456}^{(2)},q_{2345}^{(1)}\}, &\quad & \{q_{126}, q_{146}, q_{1346}^{(2)},q_{1346}^{(1)},q_{134},q_{345}\},\\
   \{ q_{1256}^{(2)},q_{1456}^{(1)},q_{1236}^{(1)},q_{1234}^{(2)},q_{3456}^{(2)},q_{2345}^{(1)}\},&\quad & \{q_{126}, q_{1246}^{(1)}, q_{1456}^{(1)},q_{3456}^{(2)},q_{2346}^{(1)},q_{234}\},\\
    \{ q_{126},q_{1245}^{(1)},q_{145},q_{124},q_{345},q_{1245}^{(2)}\},&\quad & \{q_{126}, q_{1456}^{(2)}, q_{3456}^{(1)},q_{1246}^{(2)},q_{2346}^{(2)},q_{234}\},\\
     \{q_{126},q_{146},q_{1346}^{(2)},q_{1346}^{(1)},q_{346},q_{234}\},&\quad & \{q_{126}, q_{456},q_{1246}^{(2)}, q_{2456}^{(2)},q_{2346}^{(2)},q_{234}\},\end{eqnarray*}
     \[
     \{q_{126}, q_{1246}^{(1)}, q_{456}, q_{2456}^{(1)},q_{2346}^{(1)},q_{234}\}.
     \]
\end{example}
\medskip

The description of good triangulations above gives rise to the following question: How to characterize $6$-element subsets of the $48$ intersection points giving rise to triangulations of $\mathcal{A}_{6,2,2}$? 
In particular, understanding the geometry of the configuration of quadrics $\lbrace \mathcal{Q}_i \rbrace$ is useful to identify such sets.

\section{Linear fibers and triangulations}\label{sec:linear}
Our goal in this section is to study the fiber geometry of amplituhedra. In particular for the amplituhedra conjugate to polytopes, we define the notion of amplituhedra fiber.  We will present other examples of linear fibers in \S\ref{sec:fiberposgeom}, in the context of Grassmann polytopes with linear fibers. 

\subsection{Secondary fan and Gale dual for conjugate to polytopes}\label{sec:fan_conjugate_polytopes}
We first study the amplituhedra conjugate to polytopes. We fix $\mathcal{A}:=\mathcal{A}_{n,k,m}$ where $m=n-k-1$. By Example~\ref{cor:conjfibers}, the positive fibers of $\mathcal{A}$ are bounded by some hyperplanes. 
More precisely, each full fiber contains a collection of hyperplanes which bound the corresponding positive fiber. Here, we will first define a family of fans $\mathcal{F}_\mathcal{A}(Y)$ for $Y\in \mathcal{A}$. Then we show that all such fans are combinatorially equivalent and play the role of the secondary fan for $\mathcal{A}$. To do this, we have to choose an affine chart in every full fiber (see the discussion after Definition~\ref{def:projpol}).
We recall our notation from Remark~\ref{page:AandZ}(ii) for
$Z=(\widetilde{z}_{(m+k)\times (m+k)}|z_{(m+k)\times 1})^T$ that:
\begin{equation*}\label{matrixAZperp}
Z^\perp:=((-\widetilde{z}^{-1} \cdot z )^T|1)\quad\text{and}\quad A=
\left(
\begin{array}{c|c}
-(\widetilde{z}^{-1} \cdot z)^T & 1 \\
\hline
Y \cdot \widetilde{z}^{-1} & \mathbb{0}_{k \times 1}
\end{array}
\right).
\end{equation*}
We will further take 
\begin{equation}\label{eq:affinelambda}
\lambda=\left(
\begin{array}{ccc}
x^T_{k \times 1}&\vline&\mathbb{I}_{k}
\end{array}
\right)\quad\text{with}\quad x=(x_1, \ldots, x_k)
\end{equation}
as our affine chart of the full fiber; See Remark~\ref{rem:chart}(ii) for a justification of this choice. Note that under the identification of $\Gr(k,k+1)$ with $(\P^{k+1})^*$ this affine chart is given by: \begin{equation}\label{eq:dualchart}
\bar{\lambda}=\begin{matrix}(1|-x_1&\cdots&-x_k).\end{matrix}
\end{equation}

\begin{definition}\label{def:sec_fan}
{\rm Let $Y\in\mathcal{A}$ and let $\mathcal{H}_i$ be the hyperplane in the affine chart given in \eqref{eq:affinelambda} defined by $p_{I_i}=0$ for the cyclic $k$-interval $I_i$. 
We define 
$\mathcal{F}_\mathcal{A}(Y)$ to be the chamber fan containing the normal rays (negatively directed w.r.t.~the functions $p_{I_i}$) of the hyperplanes $\{\mathcal{H}_1,\ldots,\mathcal{H}_n\}$. 
}\end{definition}

Given any cyclic  $k$-interval $I_i$ of $[n]$, we define 
\begin{equation}\label{eq:W}
W^{I_i}:=-((A^{I_i})^\perp_2,\ldots,(A^{I_i})^\perp_{k+1}) \in \mathbb{R}^k\quad\text{and}\quad W_0^{I_i}:=-(A^{I_i})^\perp_1,
\end{equation}
where $(A^{I_i})^\perp _\ell$ denotes the $\ell^{\rm th}$ coordinate of the orthogonal complement of $A^{I_i}$.

\medskip

In the affine chart given in \eqref{eq:affinelambda}, we have that:
\begin{Lemma}\label{lem:Wi}
The rays $W^{I_1},\ldots,W^{I_n}$ are the normal rays of the hyperplanes $\mathcal{H}_1,\ldots,\mathcal{H}_n$. In particular, the fan $\mathcal{F}_{\mathcal{A}}(Y)$ is the chamber fan of the rays $W^{I_1},\ldots,W^{I_n}$.
\end{Lemma}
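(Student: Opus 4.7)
The plan is to reduce the lemma to a direct cofactor computation on the determinant that defines the boundary hyperplanes, using the explicit affine chart \eqref{eq:affinelambda} on the fiber.

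By Proposition~\ref{prop:boundaries}, the hyperplane $\mathcal{H}_i = \{p_{I_i}=0\}$ inside the fiber is cut out by the vanishing of $\det(\lambda \mid (A^{I_i})^\perp)$. In the conjugate-to-polytope case $m = n-k-1$ we have $n-m = k+1$, so $(A^{I_i})^\perp$ is a one-dimensional subspace of $\mathbb{R}^{k+1}$. First I would represent it by the single row vector $((A^{I_i})^\perp_1, \ldots, (A^{I_i})^\perp_{k+1})$; then $(\lambda \mid (A^{I_i})^\perp)$ becomes a square $(k+1)\times(k+1)$ matrix, and $\mathcal{H}_i$ is defined by the vanishing of its determinant.

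Next, substituting $\lambda = (x^T \mid \mathbb{I}_k)$ into this $(k+1)\times(k+1)$ matrix and expanding the determinant along the bottom row, the cofactor of $(A^{I_i})^\perp_1$ is simply $\det(\mathbb{I}_k) = 1$, while the cofactor of $(A^{I_i})^\perp_j$ for $j \geq 2$ reduces, after swapping $x$ to the vacated pivot position and identifying the remaining submatrix with $\mathbb{I}_{k-1}$, to $(-1)^j x_{j-1}$. Collecting the alternating signs, I expect to arrive at the identity
\begin{equation*}
\det(\lambda \mid (A^{I_i})^\perp) \;=\; (-1)^k\Bigl( (A^{I_i})^\perp_1 \;-\; \sum_{j=2}^{k+1} (A^{I_i})^\perp_j\, x_{j-1}\Bigr),
\end{equation*}
which exhibits $\mathcal{H}_i$ as the affine hyperplane $\sum_{j=2}^{k+1}(A^{I_i})^\perp_j\, x_{j-1} = (A^{I_i})^\perp_1$ in $\mathbb{R}^k$. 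Its normal direction, with the sign convention ``negatively directed w.r.t.~$p_{I_i}$'' stipulated in Definition~\ref{def:sec_fan}, is then precisely $W^{I_i} = -((A^{I_i})^\perp_2, \ldots, (A^{I_i})^\perp_{k+1})$ by \eqref{eq:W}, proving the first claim. The ``in particular'' statement is immediate: since $\mathcal{F}_{\mathcal{A}}(Y)$ is by Definition~\ref{def:sec_fan} the chamber fan of the chosen normal rays of $\mathcal{H}_1, \ldots, \mathcal{H}_n$, it coincides with the chamber fan of $W^{I_1}, \ldots, W^{I_n}$.

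The lemma is essentially formal once the determinant is unpacked, so there is no serious obstacle. The only point requiring a little care is the sign bookkeeping in the cofactor expansion and the verification that the sign in the definition of $W^{I_i}$ in \eqref{eq:W} matches the ``negatively directed w.r.t.~$p_{I_i}$'' convention of Definition~\ref{def:sec_fan} (rather than giving the opposite ray); once that is checked in one small case, e.g.\ $k=1$ or $k=2$, the general identity above settles it uniformly.
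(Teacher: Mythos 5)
Your argument is correct and is essentially the paper's, just unpacked: the paper bypasses the cofactor expansion by passing to the dual chart $\bar{\lambda}=(1,-x_1,\ldots,-x_k)$ of \eqref{eq:dualchart}, writing $p_{I_i}(V)=\bar{\lambda}\cdot(A^{I_i})^\perp=(A^{I_i})^\perp_1-\sum_{a=1}^k x_a (A^{I_i})^\perp_{a+1}$ directly and reading off the normal as $W^{I_i}$. Your identity $\det(\lambda\mid(A^{I_i})^\perp)=(-1)^k\bigl((A^{I_i})^\perp_1-\sum_j (A^{I_i})^\perp_j x_{j-1}\bigr)$ is the same linear form up to the global sign $(-1)^k$, which is harmless here since $(A^{I_i})^\perp$ is only fixed up to scalar and a global sign does not change the line $\mathcal{H}_i$ nor the span of its normal ray; the small sign-tracking loose end you flag is present in the paper's own convention as well and does not affect the chamber fan.
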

\begin{proof}
The hyperplanes $\mathcal{H}_1,\ldots,\mathcal{H}_n$ are given by
\begin{equation*}
    p_{I_i}(V)=\bar{\lambda} \cdot (A^{I_i})^\perp= (A^{I_i})^\perp_1-\sum_{a=1}^k x_a (A^{I_i})^\perp_{a+1}, \quad I_i=\lbrace i,i+1,\ldots,i+k-1\rbrace.
\end{equation*}
Hence, $W^{I_1},\ldots,W^{I_n}$ are their normal directions, as desired.
\end{proof}

Assuming that the functions $\det(W^{I_{j_1}},\ldots,W^{I_{j_k}})$ have a specific sign on $\A$ (see Conjecture~\ref{con:main}), we will show that the combinatorial structure of $\mathcal{F}_\mathcal{A}(Y)$ does not depend on $Y$. More precisely, the cones of $\mathcal{F}_\mathcal{A}(Y)$ 
are in correspondence with the dissections of $\mathcal{A}$; See Theorem~\ref{triangamp}. In particular, in \S\ref{subsec:geometriccharact} we will give a geometric interpretation to the functions
$
\det(W^{I_{j_1}},\ldots,W^{I_{j_k}}).
$

 \begin{remark}\label{rem:chart}
 \begin{itemize}
      \item[(i)] 
Conjecture~\ref{con:main} would also imply that our choice of affine charts of fibers in \eqref{eq:affinelambda} is compatible, i.e.~the removed hyperplane is never parallel to any boundary hyperplane $\mathcal{H}_i$.
The key point is that if $\mathcal{H}_i$ becomes parallel to a facet, then the corresponding ray in the normal fan disappears in the affine chart. Hence, all the determinants containing its generator converge to 0.

\item[(ii)] We observe that a family of hyperplanes of type $\lbrace \bar{\lambda}_i =0 \rbrace$ with $i \not = 1$ will be parallel to a facet of the fiber in $\mathbb{R}^k$ which gives a non-generic fan. Indeed, any choice other than \eqref{eq:affinelambda} would give rays with $(A^I)^\perp_1$ as one of their coordinates (w.l.o.g.~we may assume that this is $(A^I)^\perp_1$). However,
\begin{equation} \label{zerocone}
(A^I)^\perp_1=A^I_{\{2,\ldots, k+1\}}=0, \quad \mbox{if } k+1 \in I
\end{equation}
since the last $k$ entries of the $(k+1)^{\rm th}$ column of  $A$ are all zero. Therefore, considering all cyclic $k$-intervals $I_i=\lbrace i,i+1,\ldots,i+k-1\rbrace$ containing $k+1$, the corresponding cone is not maximal, i.e.
\begin{equation*}
    \det \left\lbrace (A^{I_i})^\perp \right\rbrace_{I_i \ni k+1}=0
\end{equation*}
since the matrix in the determinant has the first column zero, by virtue of \eqref{zerocone}. \end{itemize}
 \end{remark}

\subsection{Characterization of the rays of the secondary fan} \label{subsec:geometriccharact}
To prove the combinatorial equivalence of the fans $\mathcal{F}_\mathcal{A}(Y)$, we will first define a collection of functions on general amplituhedra (not necessarily conjugate to polytopes).

\begin{definition}\label{def:functions}
{\rm Let $P_1,\ldots,P_k$ be a collection of $(m+1)$-dim subspaces in $\mathbb{R}^{m+k}$ represented by matrices denoted with the same letters. 
We define 
 the following covariant function on 
 $\Gr(k,m+k)$:
\begin{equation}\label{eq:functions}
    \langle Y P_1 \cap \cdots \cap P_k \rangle:= \det\big( \langle Y^\perp_\alpha P_\beta \rangle \big)_{\genfrac{}{}{0pt}{2}{1\leq \alpha\leq k}{1\leq \beta\leq k}},
\end{equation}
where $\langle Y^\perp_\alpha P_\beta \rangle:=\sum_{1\leq\alpha_1<\cdots<\alpha_{k-1}\leq k} \epsilon_{\alpha \alpha_1\cdots\alpha_{k-1}} \langle Y_{\alpha_1} \cdots Y_{\alpha_{k-1}} P_\beta \rangle$ for $Y\in \Gr(k,m+k)$. Here $\langle \cdots \rangle$ denotes the determinant of the matrix obtained by stacking all the columns (or submatrices) in the dots.
}\end{definition}

The following proposition gives a more geometric characterization of the functions $\langle Y P_1 \cap \cdots \cap P_k \rangle$ 
in \eqref{eq:functions} for a generic $Y$ in $\Gr(k,m+k)$. 
\begin{Proposition}\label{propvect}
There exist vectors $v_{P_\alpha} \in Y\cap P_{\alpha}$ with $v_{P_\alpha}=\sum_{\beta=1}^k v_{P_\alpha}^\beta Y_\beta$ such that
$$
\langle Y P_1 \cap \cdots \cap P_k \rangle = \det{ \big( v_{P_\alpha}^\beta \big)}_{\genfrac{}{}{0pt}{2}{1\leq \alpha\leq k}{1\leq\beta\leq k}}.
$$ 
In particular, the function $\langle Y P_1 \cap \cdots \cap P_k \rangle$ vanishes if and only if $v_{P_1}, \ldots, v_{P_k}$ are linearly dependent.
\end{Proposition}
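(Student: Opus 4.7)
My plan is to prove both halves of the proposition essentially simultaneously, by making a single explicit choice of the vectors $v_{P_\alpha}$ and then verifying that this choice has all the required properties. Namely, I would define
\[
v_{P_\alpha} := \sum_{\beta=1}^k \langle Y^\perp_\beta P_\alpha \rangle\, Y_\beta,
\]
so that by construction $v_{P_\alpha} \in Y$ and its coordinates in the $Y$-basis are $v^\beta_{P_\alpha} = \langle Y^\perp_\beta P_\alpha \rangle$. Then the determinant identity $\langle Y P_1 \cap \cdots \cap P_k \rangle = \det(v^\beta_{P_\alpha})$ is immediate from the definition in \eqref{eq:functions} and the invariance of the determinant under transposition. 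The second statement about vanishing is then also immediate: the $v_{P_\alpha}$ lie in the $k$-dimensional subspace $Y$, so they are linearly dependent in $\mathbb{R}^{m+k}$ iff their matrix of $Y$-coordinates is singular.

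The heart of the argument is therefore to show that $v_{P_\alpha}$ actually lies in $P_\alpha$ (and hence in $Y \cap P_\alpha$). The condition $v_{P_\alpha} \in \mathrm{span}(P_\alpha)$ is equivalent to $v_{P_\alpha} \wedge P_\alpha = 0$ in $\wedge^{m+2}(\mathbb{R}^{m+k})$, and by non-degeneracy of the wedge pairing $\wedge^{m+2} \times \wedge^{k-2} \to \wedge^{m+k}$, this amounts to showing that for every choice of auxiliary vectors $w_1, \ldots, w_{k-2} \in \mathbb{R}^{m+k}$ one has
\[
\sum_{\beta=1}^k (-1)^{\beta-1}\, \langle Y_{[k]\setminus\beta}\, P_\alpha \rangle \cdot \langle Y_\beta\, P_\alpha\, w_1 \cdots w_{k-2} \rangle \;=\; 0,
\]
using that $\langle Y^\perp_\beta P_\alpha \rangle = (-1)^{\beta-1} \langle Y_{[k]\setminus\beta}\, P_\alpha \rangle$ (the only non-vanishing term in the signed sum in Definition~\ref{def:functions}).

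The key (and final) step is to recognize this sum as a single instance of the classical Grassmann–Plücker relation in $\mathbb{R}^{m+k}$. Applying it to the $(m+k+1)$-tuple $(Y_1, \ldots, Y_k, p_1, \ldots, p_{m+1})$, where $p_1, \ldots, p_{m+1}$ are the rows of $P_\alpha$, and the $(m+k-1)$-tuple $(p_1, \ldots, p_{m+1}, w_1, \ldots, w_{k-2})$, the Laplace-type identity
\[
\sum_{i=1}^{m+k+1} (-1)^i \det(v_1, \ldots, \widehat{v_i}, \ldots, v_{m+k+1})\, \det(v_i, w'_1, \ldots, w'_{m+k-1}) = 0
\]
collapses exactly to the displayed identity above: the terms with $i \leq k$ produce the desired summands, while the terms with $i > k$ vanish because then $v_i$ is one of the rows $p_j$ of $P_\alpha$, which already appears in the second determinant, forcing it to be zero. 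I expect no serious obstacle here — once one sees that the Plücker relation matches on the nose, the proof is essentially algebraic bookkeeping; the only subtle point is choosing the two tuples so that the unwanted terms are killed by repeated rows rather than giving extra contributions.
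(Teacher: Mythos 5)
Your proposal is correct and follows essentially the same route as the paper: you choose the same vector $v_{P_\alpha}=\sum_{\beta}\langle Y^\perp_\beta P_\alpha\rangle\,Y_\beta$, and your Grassmann--Pl\"ucker step is exactly the paper's identity $\sum_{i=1}^{d+1}(-1)^{i+1}w_i\langle w_1\cdots\widehat{w_i}\cdots w_{d+1}\rangle=0$ for $d+1$ vectors in $\R^d$, contracted against $\det(\,\cdot\,,P_{\alpha 1},\ldots,P_{\alpha\,m+1},w_1,\ldots,w_{k-2})$. The paper applies that identity directly to rewrite $v_{P_\alpha}$ as a linear combination of the spanning vectors of $P_\alpha$, which makes $v_{P_\alpha}\in P_\alpha$ manifest and avoids the auxiliary wedge-pairing argument.
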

\begin{proof}
Suppose that $P_\alpha$ is spanned by the vectors $P_{\alpha 1}\ldots P_{\alpha m+1}$. Note that $\dim (P_\alpha)= m+1$ and $\dim (Y) =k$, hence the intersection $Y\cap P_{\alpha}$ is of dimension one for a generic $Y$.
Let $v_{P_{\alpha}} \in \mathbb{R}^{m+k}$ be a non-zero vector in the intersection $Y\cap P_{\alpha}$. 
One can construct such vector for $P_\alpha$ in the following way:
\begin{equation}\label{intersection}
v_{P_\alpha} = \sum_{\beta=1}^k (-1)^{\beta+1} Y_\beta \langle Y_1\cdots \widehat{Y}_\beta\cdots Y_k P_{\alpha 1}\cdots P_{\alpha {m+1}} \rangle
\end{equation}
\begin{equation}\label{intersection2}
\quad\quad\quad\quad=\sum_{j=1}^{m+1} (-1)^{j+1+k} P_{\alpha j} \langle Y_1\cdots Y_k P_{\alpha 1}\cdots\widehat{P}_{\alpha j}\cdots P_{\alpha {m+1}}\rangle.
\end{equation}
The equality above holds since for any collection $w_1,\ldots, w_{d+1}$ of $d+1$ vectors in $\R^d$ we have that
$$
\sum_{i=1}^{d+1} (-1)^{i+1}w_i\langle w_1\cdots \widehat w_i \cdots w_{d+1}\rangle =0.
$$
By the definition of $v_{P_\alpha}$ in \eqref{intersection} it is obvious that $v_{P_\alpha}$ belongs to the space $Y$. On the other hand, by~\eqref{intersection2} we have that $v_{P_\alpha}$ belongs to $P_{\alpha}$, therefore $v_{P_\alpha}\in Y\cap P_{\alpha}$.
Moreover, note that the matrix of $v_{P_\alpha}$'s in the basis $Y_{\beta}$ of $Y$ coincides with $\big(\langle Y^\perp_\alpha P_{{\beta}} \rangle \big)_{\genfrac{}{}{0pt}{2}{1\leq \alpha\leq k}{1\leq\beta\leq k}}$. This completes the proof.
\end{proof}

In the following, we will consider the case when the subspaces $P_{\alpha}$ are spanned by the columns of the matrix $Z$ indexed by $J_{\alpha} \in \binom{[n]}{ m+1}$. We denote the corresponding function \eqref{eq:functions} with the shorthand $\langle Y J_1 \cap \cdots \cap J_k \rangle$. Moreover, we focus on the amplituhedron $\mathcal{A}=\mathcal{A}_{n,k,n-k-1}$ (conjugate to a polytope).

\begin{Theorem} \label{geomchar}
Let $J=\{j_1,\ldots,j_k\}$ be a $k$-subset of $[n]$ and $\{W^{I_{j_1}},\ldots,W^{I_{j_k}}\}$ be the corresponding collection of rays of $\mathcal{F}_{\mathcal{A}}(Y)$ as defined in \eqref{eq:W}. Then
\begin{equation*}
  (-1)^{\xi_J} \cdot \det (W^{I_{j_1}},\ldots,W^{I_{j_k}})=\langle Y  \overline{I}_{j_1}\cap\cdots\cap \overline{I}_{j_k} \rangle = \det(v_{\overline{I}_{j_1}},\ldots,v_{\overline{I}_{j_k}}), 
\end{equation*}
where $\xi_J:=\sum_{j \in J} s_{I_j}$, and $s_{I_j} := k j$ if $n\notin I_j$ and $s_{I_j} := j-1$ otherwise.
In particular, a collection of rays $\{W^{I_{j_1}},\ldots,W^{I_{j_k}}\}$ is degenerate if and only if the vectors $v_{\bar{I}_{j_1}},\ldots,v_{\bar{I}_{j_k}}$ are linearly dependent.
\end{Theorem}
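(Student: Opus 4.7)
The second equality in the theorem follows immediately from Proposition~\ref{propvect} applied to the $(m{+}1)$-dimensional subspaces $P_\alpha := \mathrm{span}(Z_{\overline I_{j_\alpha}})$, so the substance of the claim lies in the first equality.

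The strategy is to set up a natural bijection between $(A^{I_i})^\perp \subset \mathbb{R}^{k+1}$ and the one-dimensional intersection $Y \cap P_i$. Given a left-kernel vector $\mu \in (A^{I_i})^\perp$, the row vector $u = \mu \cdot A \in \mathbb{R}^n$ lies in the row span $Z^{-1}(Y)$ of $A$ and is supported on $\overline I_i$ (since $\mu A^{I_i}=0$), so $v := Z u^T$ lies in $Y \cap P_i$. Using the explicit form of $A$ in \eqref{matrixA} together with the relations $Z(Z^\perp)^T=0$ and $Z c_\alpha^T = Y_\alpha$ (taking $\tilde z = \mathbb{I}_{m+k}$ for concreteness, since the amplituhedron is independent of the choice of $Z$), the image expands to $v = \sum_{\beta=1}^k \mu_{\beta+1} Y_\beta$; the coefficient $\mu_1$ attached to the kernel row $Z^\perp$ contributes nothing. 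Hence the coordinates of $v$ in the basis $\{Y_\beta\}$ of $Y$ are precisely $(\mu_2, \ldots, \mu_{k+1})$, which by the definition of $W^{I_i}$ in \eqref{eq:W} agree up to a uniform sign with the components of $W^{I_i}$.

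To compare with the vector $v_{\overline I_i}$ produced by formula \eqref{intersection} in Proposition~\ref{propvect}, I would take the canonical cofactor generator of $(A^{I_i})^\perp$, namely $\mu_\ell = (-1)^{\ell+1} \det((A^{I_i})_{\widehat{\ell}})$, so that each coordinate of $v$ in $\{Y_\beta\}$ is (up to the sign in $\beta$) the $k \times k$ minor $\det((A^{I_i})_{\widehat{\beta+1}})$ of $A$. On the other hand Proposition~\ref{propvect} writes the corresponding coordinate of $v_{\overline I_i}$ as the $(m{+}k) \times (m{+}k)$ determinant $\langle Y_1 \cdots \widehat Y_\beta \cdots Y_k \, Z_{\overline I_i}\rangle$. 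With $\tilde z = \mathbb{I}_{m+k}$ the columns of $Z$ are $e_1,\ldots,e_{m+k}$ together with the single extra column $z$, so I would Laplace-expand this larger determinant along the standard-basis columns $e_j$ with $j \in \overline I_i \cap [m+k]$. Each such column selects a unique row, and after stripping them the determinant reduces to the $k \times k$ minor $\det((A^{I_i})_{\widehat{\beta+1}})$ multiplied by a sign determined by the interleaving positions of the $e_j$-columns and by whether the extra column $z$ appears---equivalently, whether $n \in \overline I_i$ or $n \in I_i$. The key computational claim is that this comparison sign equals $(-1)^{s_{I_i}}$: the two cases $n \notin I_i$ and $n \in I_i$ correspond to the presence or absence of the $z$-column, and a direct inversion count in each case recovers the exponents $kj$ and $j-1$, respectively. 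Taking the $k \times k$ determinant of the coordinate vectors column by column and pulling out one such sign per column then yields $\det(v_{\overline I_{j_1}}, \ldots, v_{\overline I_{j_k}}) = (-1)^{\xi_J} \det(W^{I_{j_1}}, \ldots, W^{I_{j_k}})$, which is the first equality; the degeneracy statement follows at once since $(-1)^{\xi_J}$ is a nonzero sign.

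The main obstacle will be the sign bookkeeping in the Laplace expansion: while the structural argument is clean, pinning down the exponent $s_{I_i}$ in both cases requires a careful inversion count, and one must be consistent about the global sign in the choice of generator of $(A^{I_i})^\perp$ (which is tied to the ``negatively directed'' convention for $W^{I_i}$ and to the ordering of columns in $A^{I_i}$). A more conceptual route might apply Cauchy--Binet to $\lambda A$ to express $p_{I_i}(V)$ directly in terms of the minors of $A^{I_i}$, thereby bypassing Proposition~\ref{propvect} altogether, but the distinction between wrapping and non-wrapping cyclic intervals is intrinsic to the sign pattern and any proof will have to track it.
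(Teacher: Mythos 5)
Your proposal follows essentially the same route as the paper: the second equality is read off from Proposition~\ref{propvect}, and the first equality is exactly the content of the paper's Lemma~\ref{dets}, which compares the $k\times k$ minors of $A$ underlying $W^{I_j}$ with the big determinants $\langle Y^\perp_\alpha \bar{I}_j\rangle$ by the same Laplace-type reduction along the identity columns of $Z$ (with $\widetilde{z}=\mathbb{I}_{m+k}$), split into the cases $n\notin I_j$ and $n\in I_j$ to produce the signs $(-1)^{kj}$ and $(-1)^{j-1}$. The one step you defer --- the explicit inversion count fixing $s_{I_j}$ and the consistency of the generator/orientation conventions for $(A^{I_j})^\perp$ --- is precisely the computation the paper carries out in the proof of Lemma~\ref{dets}, so your outline is correct and matches the paper's argument.
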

\begin{proof}
The first equality follows from Lemma~\ref{dets}, since both sides are determinant of matrices related by transposition and multiplication of some rows by $-1$. The second equality is Proposition~\ref{propvect}. 
\end{proof}

\begin{Lemma} \label{dets}
The rays $W^{I_j}$ of the fan $\mathcal{F}_\mathcal{A}(Y)$ can be expressed in terms of the functions in \eqref{eq:functions} as:
\begin{eqnarray*}
W^{I_j}_\alpha= (-1)^{s_{I_j}} \cdot \langle Y^\perp_\alpha \bar{I_j}\rangle,
\end{eqnarray*}
where $s_{I_j}:= kj$ if $n\notin I$ and $s_{I_j} := j-1$ otherwise.
\end{Lemma}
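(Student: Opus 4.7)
The plan is to compute both sides of the identity directly using the explicit form of $A$ from Remark~\ref{page:AandZ}(ii) with $\widetilde z=\mathbb{I}_{m+k}$ (allowed by Remark~\ref{page:AandZ}(iii)), so that $Z=(\mathbb{I}_{m+k}\mid z)$ and
$$
A=\left(\begin{array}{c|c} -z^T & 1 \\ \hline Y & \mathbb{0}_{k\times 1}\end{array}\right)
$$
is a $(k+1)\times n$ matrix whose last column is $(1,0,\ldots,0)^T$ while its first $n-1$ columns have the form $(-z_i,Y_{1i},\ldots,Y_{ki})^T$. Since $(A^{I_j})^\perp$ generates the left nullspace of the $(k+1)\times k$ matrix $A^{I_j}$, the standard cofactor formula gives $(A^{I_j})^\perp_\ell=(-1)^{\ell+1}\det(A^{I_j}_{\widehat{\ell}})$, and hence, by the definition~\eqref{eq:W},
$$
W^{I_j}_\alpha \;=\; (-1)^{\alpha+1}\det\!\bigl(A^{I_j}_{\widehat{\alpha+1}}\bigr).
$$

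For the right-hand side, observe that only one tuple in the sum defining $\langle Y^\perp_\alpha\bar I_j\rangle$ (Definition~\ref{def:functions}) is nonzero, namely $(\alpha_1,\ldots,\alpha_{k-1})=(1,\ldots,\widehat\alpha,\ldots,k)$, with Levi--Civita sign $(-1)^{\alpha-1}$. Thus
$$
\langle Y^\perp_\alpha\bar I_j\rangle \;=\; (-1)^{\alpha-1}\,\langle Y_1\cdots\widehat Y_\alpha\cdots Y_k\; Z_{\bar I_j}\rangle,
$$
a single $(m+k)\times(m+k)$ determinant. Because the columns $Z_i$ for $i\in\bar I_j\cap [m+k]$ are standard basis vectors (and $Z_n=z$ appears precisely when $n\in\bar I_j$), I would Laplace-expand along these identity columns. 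Permuting rows and columns into block-triangular form collapses this big determinant to a small Schur complement: a $k\times k$ minor in Case~1 ($n\notin I_j$, so $n\in\bar I_j$) involving rows of $Y$ and of $z$ indexed by $I_j$, or a $(k-1)\times(k-1)$ minor in Case~2 ($n\in I_j$) involving only rows of $Y$ indexed by $I_j\setminus\{n\}$.

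In Case~1, the $k\times k$ Schur complement matches $\det(A^{I_j}_{\widehat{\alpha+1}})$ after factoring $-1$ from the top row $-z^{I_j}$ and cyclically rotating $z^{I_j}$ between the top and bottom (sign $(-1)^{k-1}$). Combining the Laplace-expansion sign $(-1)^{k(m-j+1)+(k-1)m}$, the cofactor sign $(-1)^{\alpha+1}$, the Levi--Civita sign $(-1)^{\alpha-1}$, and the evenness of $m$ (the conjugate-to-polytope setting, \S\ref{subsec:paritydual}) yields the overall factor $(-1)^{kj}$, as claimed. In Case~2 the column $A^n=(1,0,\ldots,0)^T$ appears in $A^{I_j}$; expanding $\det(A^{I_j}_{\widehat{\alpha+1}})$ along this column picks up only its top entry $1$ and produces a $(k-1)\times(k-1)$ minor of $Y$ on rows $I_j\setminus\{n\}$ --- exactly the Schur complement produced on the right-hand side. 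The analogous sign tally, now tracking the position of the $n$-column inside $I_j$ under the cyclic ordering, reduces to $(-1)^{j-1}$.

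The main obstacle is sign bookkeeping. No individual step is difficult -- cofactor expansion, Laplace expansion, and Levi--Civita manipulations are all routine -- but keeping track of all the signs consistently is delicate. The cyclic order convention $I_j=(j,\ldots,j+k-1)\bmod n$ treats the wraparound through the index $n$ specially, and the last column of $A$ is exceptional (being $(1,0,\ldots,0)^T$ rather than of the generic form $(-z_i,Y_{1i},\ldots,Y_{ki})^T$); these two features are precisely what produce the asymmetric formula $s_{I_j}=kj$ versus $s_{I_j}=j-1$.
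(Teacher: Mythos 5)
Your overall route is the same as the paper's: represent $W^{I_j}_\alpha$ as a signed $k\times k$ minor of $A^{I_j}$, observe that exactly one term survives in the sum defining $\langle Y^\perp_\alpha \bar{I}_j\rangle$, so that the right-hand side becomes $(-1)^{\alpha+1}\langle Y_{\widehat{\alpha}}\, Z_{\bar{I}_j}\rangle$, and then collapse this $(m+k)\times(m+k)$ determinant using the unit columns $Z_i=e_i$ for $i\in\bar{I}_j\cap[m+k]$, splitting into the cases $n\notin I_j$ and $n\in I_j$. The paper performs the collapse by shuffling identity blocks through the auxiliary matrices $B$ and $B'$ rather than by Laplace expansion along the unit columns; that difference is cosmetic.

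The gap is that the lemma \emph{is} its sign, and you never compute it. The Laplace sign $(-1)^{k(m-j+1)+(k-1)m}$ is quoted without derivation, the second case is only sketched, and the target signs depend on a normalization of $(A^{I_j})^\perp$ which \eqref{eq:W} fixes only up to a scalar: you choose the cofactor rule $(A^{I_j})^\perp_\ell=(-1)^{\ell+1}\det(A^{I_j}_{\widehat{\ell}})$, whereas the paper's proof works with $W^{I_j}_\alpha=(-1)^{\alpha}\det(A^{I_j}_{\widehat{\alpha}})$ (its $\widehat{\alpha}$ meaning deletion of the row of $Y_\alpha$), i.e.\ the globally opposite choice, and you never reconcile the two. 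This matters: if you actually carry out the bookkeeping you deferred --- or just test $k=1$, or the $\mathcal{A}_{5,2,2}$ data of Example~\ref{ex:conjpentaton}, where $(A^{I_i})^\perp$ is written out explicitly and coincides with your cofactor rule --- you find that under your convention the case $n\notin I_j$ does produce $(-1)^{kj}$, but the case $n\in I_j$ produces $(-1)^{j}$ rather than $(-1)^{j-1}$; the two outcomes you assert cannot both hold under any single fixed normalization. The culprit is the first row of $A$, which is $-z^{T}$: this extra factor $-1$ enters the surviving $k\times k$ minor only in the case $n\notin I_j$ (when $n\in I_j$ the $z$-row contributes only through the entry $+1$ of the $n$-th column), and the paper's own chain silently drops exactly this sign when it identifies $\det[\,z_{I_j}\mid Y^{I_j}_{\widehat{\alpha}}\,]$ with $\det(A^{I_j}_{\widehat{\alpha}})$. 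A complete argument therefore has to first pin down the normalization of $(A^{I_j})^\perp$ (for instance by requiring $p_{I_j}(V)=\det(\lambda\mid(A^{I_j})^{\perp})$ as in Proposition~\ref{prop:boundaries}) and then do both sign tallies in full; as it stands, your proposal would ``confirm'' the printed exponents $s_{I_j}$ without ever detecting that, relative to its own conventions, one of the two cases is off by a sign.
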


\begin{proof}
We recall that $W^{I_j}_\alpha=(-1)^{\alpha} \det(A_{\widehat{\alpha}}^{I_j})$ 
and
$A_{\widehat{\alpha}}^{I_j}$ is the submatrix of $A$ with the columns indexed by $I_j$ and all the rows except the $\alpha^{\rm th}$. We will show that, up to a sign, $\det (A_{\widehat{\alpha}}^{I_j})$ is the same as $\langle Y^\perp_\alpha \bar{I_j} \rangle$. 
First, we define the matrices $A$ and $B$ as follows:
\begin{eqnarray*}
B=\left[\begin{array}{@{}c|c|c|c@{}}
\mathbb{I}_{j-1} & \mathbb{O} & \begin{matrix}z_1\\ \vdots\\ z_{j-1} \end{matrix}   & Y_{\widehat\alpha}^1 \\ \hline
\mathbb{O}_{k\times (j-1)} & \mathbb{O} & \begin{matrix}z_j\\\vdots \\ z_{j+k-1} \end{matrix} &   \vdots         \\ \hline
\mathbb{O} & \mathbb{I}_{m-j+1} & \begin{matrix}z_k\\\vdots \\ z_{k+m} \end{matrix}   & Y_{\widehat\alpha}^{m+k}
\end{array}\right]\quad\text{and}\quad B'=\left[\begin{array}{@{}c|c|c|c@{}}
\mathbb{I}_{j-1}  & \begin{matrix}z_1\\ \vdots\\ z_{j-1} \end{matrix}    & Y_{\widehat\alpha}^1         & \mathbb{O}  \\ \hline
\mathbb{O}_{k\times (j-1)}  & \begin{matrix}z_j\\\vdots \\ z_{j+k-1} \end{matrix}  & \vdots                   & \mathbb{O}  \\ \hline
\mathbb{O}  & \begin{matrix}z_k\\\vdots \\ z_{k+m} \end{matrix}    &   Y_{\widehat\alpha}^{m+k}   & \mathbb{I}_{m-j+1}
\end{array}\right]
\end{eqnarray*}
If $I_j$ does not contain $m+k+1$, then we can write  the determinant $\langle Y^\perp_\alpha \bar{I_j} \rangle$ as follows: 
\begin{eqnarray*}
\langle Y^\perp_\alpha \bar{I_j} \rangle&=& 
(-1)^{\alpha+1} \langle Y_{\hat{\alpha}} \bar{I_j} \rangle
=
(-1)^{\alpha+1} (-1)^{(k-1)(m+1)} \det(B)=
(-1)^{\alpha+1} 
(-1)^{kj+m+1}
\det(B')
\\
&=&(-1)^{\alpha+kj}\det \left[\begin{array}{@{}c|c@{}}
z_j& Y_{\widehat\alpha}^j\\ 
\vdots & \vdots\\
z_{j+k-1}& Y_{\widehat\alpha}^{j+k-1}
\end{array}\right] =
(-1)^{\alpha+kj} \det(A_{\widehat{\alpha}}^{I_j})=(-1)^{kj}W^{I_j}_{\alpha}.
\end{eqnarray*}
If $I_j$ contains $m+k+1$, then the similar calculation shows that: 
\begin{equation*}
\langle Y^\perp_\alpha \bar{I_j} \rangle = (-1)^{\alpha+1} \langle Y_{\hat{\alpha}} \bar{I_j} \rangle=  (-1)^{\alpha+j+k} p_{I_j\backslash\{m+k+1\}}(Y_{\widehat \alpha})=(-1)^{\alpha+j} \det(A_{\widehat{\alpha}}^{I_j})=(-1)^{j-1}W^{I_j}_{\alpha}.\tag*{\qedhere}
\end{equation*}
\end{proof}

Now we are ready to relate the functions $\det(W^{I_{j_1}},\ldots,W^{I_{j_k}})$ to the canonical functions of the generalized triangles of the amplituhedron.
\footnote{A conjectural representation of $\Omega(\Delta_J)$ 
in the case of conjugate to polytopes was given in \cite{ferro2018amplituhedron}.}

\begin{Proposition}\label{prop:detiscan} 
Let $J=\{j_1,\ldots,j_k\}$ be a $k$-subset of $[n]$ and $I_{j_\alpha}$ be the $j_\alpha^{\rm th}$ cyclic $k$-interval.
Then the canonical function of the corresponding generalized triangle $\Delta_J$ has the following form:
\begin{equation}\label{eq:volfun}
 \Omega(\Delta_J)= \frac{\det (W^{I_{j_1}},\ldots,W^{I_{j_k}})^{m}}{\displaystyle \prod_{i \ne j_1,\ldots,j_k} \det \left( {(A^{I_{i}})}^\perp,{(A^{I_{j_1}})}^\perp,\ldots,{(A^{I_{j_k}})}^\perp \right)}.
\end{equation}
\end{Proposition}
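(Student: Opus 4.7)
The plan is to invoke Theorem~\ref{thm:resfiber} to reduce the statement to a residue computation. By that theorem, $\Omega(\Delta_J)(Y) = \mathrm{Res}_{q_J}\,\omega_{n,k,m}(Y)$, where $q_J = \pi^{-1}(Y)\cap\overline{\Delta_J}$. First I would pin down $q_J$ explicitly. By Lemma~\ref{lem:gentriangles}, $\Delta_J$ is the image of the positroid cell characterized by the vanishing of exactly the cyclic Plücker coordinates $p_{I_j}$ for $j\in J$. In view of Proposition~\ref{prop:boundaries} and the identification in Lemma~\ref{lem:Wi}, this means that, in the affine chart \eqref{eq:affinelambda}, $q_J$ is the unique transverse intersection of the $k$ hyperplanes $\mathcal{H}_{j_1},\ldots,\mathcal{H}_{j_k}$, i.e.\ the common zero of the linear forms $L_{j_\alpha}(x) := \bar\lambda\cdot (A^{I_{j_\alpha}})^\perp$ with $\bar\lambda = (1,-x_1,\ldots,-x_k)$.

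Next I would use the explicit linear form of $\omega_{n,k,m}$ given by Corollary~\ref{cor:volumefibers}, namely
\[
\omega_{n,k,m} \;=\; \frac{\mathrm{d}x_1\wedge\cdots\wedge \mathrm{d}x_k}{\prod_{i=1}^{n} L_i(x)},
\]
and the standard multivariate residue formula at a transverse intersection of simple hyperplane poles. The Jacobian matrix of the vanishing factors $(L_{j_1},\ldots,L_{j_k})$ with respect to $(x_1,\ldots,x_k)$ has entries $\partial_a L_{j_\alpha} = -(A^{I_{j_\alpha}})^\perp_{a+1} = W^{I_{j_\alpha}}_a$ by~\eqref{eq:W}, so its determinant is (up to sign) $\det(W^{I_{j_1}},\ldots,W^{I_{j_k}})$. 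Therefore
\[
\mathrm{Res}_{q_J}\,\omega_{n,k,m} \;=\; \frac{\pm 1}{\det(W^{I_{j_1}},\ldots,W^{I_{j_k}})\cdot\prod_{i\notin J} L_i(q_J)}.
\]

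The final step is to evaluate each remaining $L_i(q_J)$ in terms of $(k{+}1)\times(k{+}1)$ determinants. Since $\bar\lambda(q_J)\in\mathbb{R}^{k+1}$ is orthogonal to the $k$ columns of the matrix $N$ with columns $(A^{I_{j_1}})^\perp,\ldots,(A^{I_{j_k}})^\perp$ and normalized to have first coordinate $1$, Cramer's rule writes $\bar\lambda(q_J)_r = (-1)^{r+1}\det(N^{(r)})/\det(N^{(1)})$, where $N^{(r)}$ denotes $N$ with its $r$-th row removed. Expanding along the first column gives
\[
L_i(q_J) \;=\; \bar\lambda(q_J)\cdot (A^{I_i})^\perp \;=\; \frac{\det\bigl((A^{I_i})^\perp,(A^{I_{j_1}})^\perp,\ldots,(A^{I_{j_k}})^\perp\bigr)}{\det(N^{(1)})}.
\]
Because $W^{I_{j_\alpha}}$ is (up to a sign) the truncation of $(A^{I_{j_\alpha}})^\perp$ by its first coordinate, $\det(N^{(1)})$ equals $\det(W^{I_{j_1}},\ldots,W^{I_{j_k}})$ up to the sign $(-1)^k$. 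Substituting back, the product over $i\notin J$ contributes $n-k$ factors of $\det(W^{I_{j_\alpha}})$ to the numerator; combined with the single factor in the denominator coming from the Jacobian, the net exponent is $n-k-1 = m$, matching~\eqref{eq:volfun}.

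The main obstacle is sign bookkeeping: the Laplace expansion, the sign relating $\det(N^{(1)})$ to $\det(W^{I_{j_1}},\ldots,W^{I_{j_k}})$, and the orientation convention on $\Gr(k,k+m)$ from Remark~\ref{rk:orientation2} must all be reconciled. However, none of these sign issues affect the shape of the formula, and the overall sign is fixed by the requirement (see Remark~\ref{rk:orientation2} and the proof of Theorem~\ref{thm:resfiber}) that $\mathrm{Res}_{q_J}\,\omega_{n,k,m}(Y)$ be positive on the interior of $\pi_+(S_M)$; this pins down the presentation as \eqref{eq:volfun}.
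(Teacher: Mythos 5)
Your proposal is correct and follows essentially the same route as the paper's proof: both reduce to Theorem~\ref{thm:resfiber}, identify $q_J$ as the transverse intersection of the hyperplanes $\mathcal{H}_{j_1},\ldots,\mathcal{H}_{j_k}$, compute the residue of the linear fiber volume form with Jacobian $\det(W^{I_{j_1}},\ldots,W^{I_{j_k}})$, and then use Cramer's rule together with a Laplace expansion to rewrite each remaining factor $\bar\lambda_J\cdot (A^{I_i})^\perp$ as $\det\bigl((A^{I_i})^\perp,(A^{I_{j_1}})^\perp,\ldots,(A^{I_{j_k}})^\perp\bigr)$ divided by $\det(W^{I_{j_1}},\ldots,W^{I_{j_k}})$, yielding the net exponent $n-k-1=m$. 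The sign conventions are likewise handled as in the paper, via the positivity requirement of Remark~\ref{rk:orientation2}.
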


\begin{proof}
We first note that $\Delta_J$ is the image of an $mk$-dim positroid cell $S_{\widetilde{J}}$ of $\Gr_+(k,n)$ whose only vanishing cyclic Pl\"ucker coordinates are $p_{I_{j_1}},\ldots,p_{I_{j_k}}$.
By Lemma~\ref{lem:gentriangles}, $S_{\widetilde{J}}$ is uniquely determined.
Hence, by Theorem~\ref{thm:resfiber} we have $\pi_+^{-1}(Y) \cap S_{\widetilde{J}}=q_{\widetilde{J}}$ for a unique point $q_{\widetilde{J}}$
and $\Omega (\Delta_J)$ can be computed as:
\begin{equation*}\label{prop35eq1}
    \Omega(\Delta_J)=\mbox{Res}_{q_{\widetilde{J}}}\omega_{n,k,m}=\frac{1}{|\det( W^{I_{j_1}},\ldots,W^{I_{j_k}})|} \cdot\frac{1}{\prod_{i \not \in J} \left({\bar{\lambda}}_J \cdot {(A^{I_i})}^\perp \right)},
\end{equation*}
where $\bar{\lambda}_J=(1,(\bar{\lambda}_J)_2,\ldots,(\bar{\lambda}_J)_{k+1}) \in \mathbb{R}^{k+1}$ is the unique point in the intersection $ \bigcap_{j \in J} \lbrace \bar{\lambda} \cdot {(A^{I_i})}^\perp=0\rbrace$. Now, applying Cramer's rule we have that:
\begin{equation*}
(\bar{\lambda}_J)_{\alpha}=(-1)^{\alpha+1}\frac{\mbox{det}({(A^{I_{j_1}})}^\perp_{\hat{\alpha}},\ldots,{(A^{I_{j_k}})}^\perp_{\hat{\alpha}})}{\det(W^{I_{j_1}},\ldots,W^{I_{j_k}})} \quad\text{for}\quad \alpha=2,\ldots, k+1,
\end{equation*}
where ${(A^{I_{j_a}})}^\perp_{\hat{\alpha}}=(W^{I_{j_a}}_0, W^{I_{j_a}}_1,\ldots, \widehat{W^{I_{j_a}}_\alpha},\ldots,W^{I_{j_a}}_k)$.
Hence, the volume form $\Omega(\Delta_J)$ is given by:
\begin{equation}\label{eq:denominator}
   \Omega(\Delta_J)= \frac{\left(\mbox{det}(W^{I_{j_1}},\ldots,W^{I_{j_k}})\right)^{m}}{\displaystyle\prod_{i \not \in J}\left(\det(W^{I_{j_1}},\ldots,W^{I_{j_k}}) W_0^{I_i}+\sum_{\alpha=2}^{k+1}  (-1)^{\alpha+1}\det \left({(A^{I_{j_1}})}^\perp_{\hat{\alpha}},\ldots,{(A^{I_{j_k}})}^\perp_{\hat{\alpha}}) \right) W_\alpha^{I_i}\right)}.
\end{equation}
Notice that each factor in the denominator of~\eqref{eq:denominator} that corresponds to some $i$ with $i \not \in J$, can be rewritten as the determinant of the matrix $\left( {(A^{I_{i}})}^\perp,{(A^{I_{j_1}})}^\perp,\ldots,{(A^{I_{j_k}})}^\perp \right)$ as in \eqref{eq:volfun}. This completes the proof.
\end{proof}

\begin{Conjecture}\label{con:main}
Let $J=\{j_1,\ldots,j_k\}$ be a $k$-subset of $[n]$ and $I_{j_\alpha}$ be the ${j_\alpha}^{\rm th}$ cyclic $k$-interval.  Consider $\xi_J$ as in Theorem~\ref{geomchar}.
Then the function
\begin{equation}\label{eq:conj}
\epsilon_{J \bar{J}} \, (-1)^{\xi_J} \cdot\langle Y \bar{I}_{j_1}\cap\cdots\cap \bar{I}_{j_k}\rangle
\end{equation} 
is strictly positive, where $\epsilon_{J \bar{J}}$ is the Levi-Cevita symbol in which the elements of both $J$ and $\bar{J}$ are listed in the increasing order.
\end{Conjecture}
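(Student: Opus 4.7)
The plan is to combine Theorem~\ref{geomchar} with a connectedness argument, reducing the conjecture to a sign computation at one conveniently chosen point of $\mathcal{A}$. By Theorem~\ref{geomchar} we have $(-1)^{\xi_J}\langle Y\,\bar I_{j_1}\cap\cdots\cap\bar I_{j_k}\rangle=\det\bigl(W^{I_{j_1}},\ldots,W^{I_{j_k}}\bigr)$, so the expression in~\eqref{eq:conj} equals $\epsilon_{J\bar J}\cdot\det(W^{I_{j_1}},\ldots,W^{I_{j_k}})$. The conjecture therefore reduces to
\[
\epsilon_{J\bar J}\cdot\det\bigl(W^{I_{j_1}},\ldots,W^{I_{j_k}}\bigr)>0 \qquad \text{for every } Y\in\mathcal{A}.
\]
By Proposition~\ref{propvect} this is the geometric statement that the distinguished vectors $v_{\bar I_{j_a}}$ spanning $Y\cap P_{\bar I_{j_a}}$ form a basis of $Y$ with a prescribed orientation, where $P_{\bar I_{j_a}}$ denotes the $(m+1)$-dimensional subspace spanned by the columns of $Z$ indexed by the cyclic arc $\bar I_{j_a}$.

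The next task is to show that this determinant never vanishes on the interior of $\mathcal{A}$. Writing $Y=C\cdot Z$ with $C\in\Gr_+(k,n)$, the line $Y\cap P_{\bar I_{j_a}}$ lifts to the image of a $1$-dimensional subspace of $\R^n$ supported on the arc $\bar I_{j_a}$, so a vanishing determinant forces a nontrivial linear relation among $k$ such tightly supported vectors in $Y$. Expanding $\det(v_{\bar I_{j_1}},\ldots,v_{\bar I_{j_k}})$ via Cauchy--Binet yields a sum of products of maximal minors of $C$ and of $Z$ (each strictly positive by total positivity), and the nonvanishing amounts to checking that the combinatorial signs governing this expansion never conspire to produce total cancellation when the supports are cyclic arcs of fixed length. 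Once nonvanishing is established, constancy of the sign of $\det(W^{I_{j_1}},\ldots,W^{I_{j_k}})$ on $\mathcal{A}$ follows immediately, since $\mathcal{A}$ is path-connected as the continuous image of the connected set $\Gr_+(k,n)$.

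The remaining task is to identify the constant sign with $\epsilon_{J\bar J}$ by evaluation at one distinguished point. A natural candidate is to take $Z$ to be a Vandermonde-type totally positive matrix $z_{ij}=t_i^{j-1}$ with $0<t_1<\cdots<t_n$ and a simple $C\in\Gr_+(k,n)$; the vectors $W^{I_{j_a}}$ then become explicit polynomial expressions in the $t_i$'s whose sign can be read off directly from Lemma~\ref{dets} and compared with $\epsilon_{J\bar J}$. The main obstacle is precisely this sign bookkeeping: the factors $\epsilon_{J\bar J}$ and $(-1)^{\xi_J}$ have an intricate dependence on whether each cyclic interval $I_{j_a}$ wraps around $n$, so a naive case analysis becomes cumbersome. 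A cleaner route, available at least for even $m$, is to invoke the parity duality of Theorem~\ref{th:parityduality}, translating the statement about $\mathcal{A}_{n,k,n-k-1}$ to the analogous positivity on the parity-dual cyclic polytope $\mathcal{A}_{n,1,n-k-1}$, where all signs reduce to classical positivity of minors of totally positive matrices and to the natural orientation on simplices in $\P^{n-k-1}$. This also conceptually explains the precise sign conventions appearing in~\eqref{eq:conj}, since $\xi_J$ and $\epsilon_{J\bar J}$ correspond, under the Gale duality of \S\ref{sec:galeduality}, to the standard orientation of the dual simplex.
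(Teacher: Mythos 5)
This statement is an open conjecture in the paper: the authors do not prove it, but only verify it computationally in low cases ($m=2$ up to $n=7$, $m=4$ up to $n=8$, $m=6$ up to $n=10$) and point to partial evidence, e.g.\ that for $m=2$, $k=2$ the relevant functions are (positive combinations of) cluster variables of the Grassmannian, whose positivity follows from Scott's work rather than from any elementary expansion. Your proposal does not close this gap. The decisive step --- nonvanishing of $\det\bigl(W^{I_{j_1}},\ldots,W^{I_{j_k}}\bigr)$ on $\mathcal{A}$ --- is exactly the content of the conjecture, and you only assert that one must ``check that the combinatorial signs \ldots never conspire to produce total cancellation'' in the Cauchy--Binet expansion. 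That expansion of $\det(v_{\bar I_{j_1}},\ldots,v_{\bar I_{j_k}})$, with $v_{\bar I_{j_a}}$ as in Proposition~\ref{propvect}, genuinely contains terms of both signs once $k\ge 2$; the absence of cancellation is not a bookkeeping exercise but the hard positivity statement itself, which is why the paper has to appeal to cluster positivity even in the simplest nontrivial cases. Everything downstream (constancy of the sign by path-connectedness of $\mathcal{A}$, evaluation at a Vandermonde point) is conditional on this unproven step, so the argument as written is a reduction, not a proof.

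The proposed shortcut via parity duality is also unjustified. Theorem~\ref{th:parityduality} is a combinatorial bijection between generalized triangles (and triangulations) of $\mathcal{A}_{n,k,m}$ and of $\mathcal{A}_{n,\ell,m}$; it does not supply an identity carrying the covariant functions $\langle Y\,\bar I_{j_1}\cap\cdots\cap\bar I_{j_k}\rangle$ on $\Gr(k,k+m)$ to minors of a totally positive matrix on the cyclic-polytope side, so ``translating the statement'' to $\mathcal{A}_{n,1,m}$ requires an explicit map of functions that you have not constructed. (The closest the paper comes is \cite[Lemma~11.5]{galashin2018parity}, which relates cyclic Pl\"ucker coordinates of cells related by parity duality, not these intersection brackets.) Likewise, even granting nonvanishing in $Y$, you would still need to control the dependence on the totally positive matrix $Z$ before a single evaluation pins down the sign; this can be arranged by connectivity of $\Mat_+(k+m,n)$, but only after nonvanishing is known jointly in $(Y,Z)$, which again is the open point. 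If you want to make progress on special cases, the route indicated in the paper --- expressing \eqref{eq:conj} as a subtraction-free combination of Pl\"ucker coordinates of $C$ and $Z$, or identifying it with cluster variables as in the $m=2$ case --- is where the real work lies.
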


\begin{remark} 
The positivity of the functions in \eqref{eq:conj} 
is proven in the following cases:
\begin{itemize} 

     \item[(i)] We have verified Conjecture~\ref{con:main} by explicit calculations for $m=2$ up to $n=7$, $m=4$ up to $n=8$ and $m=6$ up to $n=10$. We also expect that even if $I_{j_\alpha}$ are not cyclic intervals, the corresponding functions are still of a constant sign inside the amplituhedron.

\item[(ii)] The functions in~\eqref{eq:conj} appeared in \cite[\S7.2.4]{positivegeom} as numerators of certain canonical functions of the $m=2$ amplituhedra. The authors provided a (sketch of) proof that 
     $\langle Y \bar{I}_{j_1}\cap\cdots\cap \bar{I}_{j_k}\rangle$ are positive, when $j_1<\cdots<j_k$ and $\bar{I}_{j_{\alpha}}$
     do not contain both $1$ and $n$.
     It is easy to see that this is implied by Conjecture~\ref{con:main}. 
     Moreover,  all generalized triangles of $m=2$ amplituhedra are classified in \cite[\S2.4]{parisicluster} and these functions appear in the numerator of their canonical functions.
    
    \item[(iii)]  
    The positivity of the functions in~\eqref{eq:conj} is related to the positivity of the matrices $C$ and $Z$, by~\eqref{eq:A_{n,k,m}}. For example, for the $k=2$ case, $\langle Y {I}_{j_1} \cap {I}_{j_2} \rangle$ can be expressed as a positive linear combination of the Pl\"ucker coordinates of $C$ and the functions
       $\langle ij{I}_{j_1} \cap {I}_{j_2} \rangle := \langle i {I}_{j_1}\rangle \langle j {I}_{j_2}\rangle-\langle i {I}_{j_2}\rangle \langle j {I}_{j_1}\rangle.$
 Moreover, for $m=2$ these functions are $\langle i \, j \, (j_1,j_1+1,j_1+2) \cap (j_2,j_2+1,j_2+2) \rangle$, which are cluster variables of $\Gr(4,n)$ \cite[\S6.3]{Golden:2013xva}, and so they are positive by the Scott's seminal work \cite[\S6]{scott}. It would be interesting to explore the positivity of \eqref{eq:conj} in connection with Grassmannian cluster algebras.

\item[(iv)]
One corollary of Conjecture~\ref{con:main} is that the fan $\mathcal{F}_\mathcal{A}(Y)$ defined in \S\ref{sec:fan_conjugate_polytopes} is combinatorially equivalent to the secondary fan of a cyclic polytope (see Proposition~\ref{prop_faneq}).
In general, the combinatorics of the secondary fan of a cyclic polytope depends on the choice of its vertices on the moment curve \cite[Lemma 4.9]{athanasiadis2000fiber}. However, we expect that the combinatorial structure of the fan $\mathcal{F}_{\mathcal{A}}(Y)$ does not depend on the point $Y\in \mathcal{A}$ which is a slight strengthening of Conjecture~\ref{con:main}. See Lemma~\ref{lem:combtype}.
\end{itemize}
\end{remark}

Now we are ready to state and prove our main results, assuming that Conjecture~\ref{con:main} holds true and the combinatorial structure of the fan $\mathcal{F}_{\mathcal{A}}(Y)$ does not depend on the point $Y\in \mathcal{A}$.
\begin{Theorem}
Let $\mathcal{A}$ be an amplituhedron conjugate to a polytope.
Then:
\begin{itemize}
    \item[$(\mathrm{i})$] The combinatorial structure of the fan $\mathcal{F}_{\mathcal{A}}(Y)$ %in \S\ref{sec:fan_conjugate_polytopes} 
    %does not depend on the point $Y\in \mathcal{A}$ and 
    defines the secondary fan of triangulations of $\mathcal{A}$.
        \item[$(\mathrm{ii})$] The volume function of a generalized triangle $\Delta$ does not vanish on $\mathcal{A}$. In particular, it has a constant sign inside $\Delta$.
    \item[$(\mathrm{iii})$] The volume function of $\mathcal{A}$ can be computed using Jeffrey-Kirwan residue.
\end{itemize}
\end{Theorem}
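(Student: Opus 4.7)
The plan is to treat the three parts as consequences of the fiber-geometric machinery developed in Sections \ref{sec:fibers}--\ref{sec:linear}, together with the parity duality to the cyclic polytope $\mathcal{A}_{n,1,m}$. Throughout I will work with the fixed affine chart \eqref{eq:affinelambda} and the normal rays $W^{I_1},\ldots,W^{I_n}$ from Lemma~\ref{lem:Wi}, and I will use Conjecture~\ref{con:main} together with its slight strengthening (that the combinatorial type of $\mathcal{F}_\mathcal{A}(Y)$ is independent of $Y\in\mathcal{A}$) as standing hypotheses.

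For $(\mathrm{i})$, I would first invoke Theorem~\ref{geomchar} to translate every determinant $\det(W^{I_{j_1}},\ldots,W^{I_{j_k}})$ into the intersection function $\langle Y \bar{I}_{j_1}\cap\cdots\cap\bar{I}_{j_k}\rangle$ whose sign, by Conjecture~\ref{con:main}, depends only on the $k$-subset $J=\{j_1,\ldots,j_k\}$ and equals $\epsilon_{J\bar J}(-1)^{\xi_J}$. Hence the chamber fan $\mathcal{F}_\mathcal{A}(Y)$ has the same combinatorics as any chamber fan obtained from rays with exactly those $(n\text{ choose }k)$ sign patterns. On the other hand, by Theorem~\ref{thgaledual} and Proposition~\ref{fiberfan}, the secondary fan of the cyclic polytope $\mathcal{A}_{n,1,m}$ is the chamber fan of the Gale dual of its vertices, whose maximal cones are indexed by the $k$-subsets $J$ with identical sign conventions. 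Combining this with the bijection of Lemma~\ref{lem:gentriangles} between $k$-subsets of $[n]$ and generalized triangles $\Delta_J$ of $\mathcal{A}$, and with parity duality (Theorem~\ref{th:parityduality}) which transports triangulations of $\mathcal{A}_{n,1,m}$ to positroidal triangulations of $\mathcal{A}$, I obtain that maximal cones of $\mathcal{F}_\mathcal{A}(Y)$ correspond to positroidal triangulations of $\mathcal{A}$, and coarser cones to coarser dissections --- exactly as claimed.

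For $(\mathrm{ii})$, the plan is to use Proposition~\ref{prop:detiscan}, which writes $\Omega(\Delta_J)$ as the ratio of $\det(W^{I_{j_1}},\ldots,W^{I_{j_k}})^m$ over a product of $(k{+}1)\times(k{+}1)$ determinants indexed by $i\notin J$. The numerator has constant nonzero sign on $\mathcal{A}$ directly by Conjecture~\ref{con:main} and Theorem~\ref{geomchar}. For the denominator, I would argue that each factor $\det((A^{I_i})^\perp,(A^{I_{j_1}})^\perp,\ldots,(A^{I_{j_k}})^\perp)$ vanishes exactly when the apex $\bar\lambda_J$ of the fiber lies on the hyperplane $\mathcal{H}_i$, i.e.~when the defining intersection point $q_{\widetilde J}=\pi^{-1}(Y)\cap\overline{S_{\widetilde J}}$ collides with another boundary divisor of the fiber; under parity duality this is the condition that the simplex $\bar\Delta_J$ of the cyclic polytope degenerates, which cannot happen for $Y$ in the open stratum $\Delta_J\subset\mathcal{A}$. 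Since $\Omega(\Delta_J)$ is a rational function and has no zeros or poles on the open set $\Delta_J$ by this argument, and since the fan cone dual to $\Delta_J$ is full-dimensional by $(\mathrm{i})$, the sign of $\Omega(\Delta_J)$ is constant on the connected component~$\Delta_J$.

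For $(\mathrm{iii})$, once $(\mathrm{i})$ is established, the secondary fan $\mathcal{F}_\mathcal{A}(Y)$ gives a well-defined chamber structure on the space of covectors $\xi$, and Corollary~\ref{cor:volumefibers} tells us that $\omega_{n,k,m}$ has only simple poles along the affine hyperplane arrangement $\{\mathcal{H}_i\}$. This is exactly the setup in which the Jeffrey-Kirwan residue of Definition~\ref{def:JKRes} is defined. I would then apply Theorem~\ref{th:jkfibers}: for a generic $\xi$ in a maximal chamber of $\mathcal{F}_\mathcal{A}(Y)$ corresponding (by $(\mathrm{i})$) to a positroidal triangulation $\mathcal{T}=\{\Delta_J\}$, the JK prescription picks up precisely the intersection points $q_{\widetilde J}$ for $J\in\mathcal{T}$, and by Theorem~\ref{thm:resfiber} the resulting sum of residues $\sum_{J\in\mathcal T}\mathrm{Res}_{q_{\widetilde J}}\omega_{n,k,m}$ equals $\Omega(\mathcal{A})$. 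The main obstacle is the non-vanishing claim in $(\mathrm{ii})$: showing that each denominator factor in \eqref{eq:denominator} has constant sign on the open generalized triangle requires knowing that the apex point $\bar\lambda_J$ never crosses a facet hyperplane inside $\Delta_J$, which in turn depends on the strengthened form of Conjecture~\ref{con:main} asserting constancy of the combinatorial type of $\mathcal{F}_\mathcal{A}(Y)$; any gap there would have to be filled by a separate positivity/topological argument via the parity dual cyclic polytope.
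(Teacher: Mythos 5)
Your overall approach matches the paper's, which simply cites its earlier results: $(\mathrm{i})$ is Theorem~\ref{triangamp}, $(\mathrm{ii})$ follows from Proposition~\ref{prop:detiscan}, and $(\mathrm{iii})$ is Theorem~\ref{th:jkfibers}. You reconstruct Theorem~\ref{triangamp} from scratch via Theorem~\ref{geomchar}, Proposition~\ref{fiberfan}, Lemma~\ref{lem:gentriangles} and parity duality rather than citing it, which is redundant but sound, and for $(\mathrm{iii})$ you correctly appeal to Theorem~\ref{th:jkfibers}.

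The one place where your argument has a substantive issue is the extra geometric justification you supply for $(\mathrm{ii})$. You claim that vanishing of a denominator factor $\det\bigl((A^{I_i})^\perp,(A^{I_{j_1}})^\perp,\ldots,(A^{I_{j_k}})^\perp\bigr)$ is ``under parity duality the condition that the simplex $\bar\Delta_J$ of the cyclic polytope degenerates.'' That cannot be right: $\bar\Delta_J$ is the simplex with vertices $\{Z_i\}_{i\in\bar J}$, so its (non-)degeneracy is a condition on $Z$ alone, whereas the denominator factor varies with $Y$ (the matrix $A$ depends on $Y$ through its last $k$ rows). The correct geometric meaning of the vanishing is that the intersection point $q_J=\bigcap_{j\in J}\mathcal{H}_j(Y)$ lies on an additional hyperplane $\mathcal{H}_i(Y)$, i.e.\ that $k+1$ of the $n$ hyperplanes in the fiber over $Y$ meet at a point. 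Moreover, your argument that this ``cannot happen'' is only made for $Y$ in the open stratum $\Delta_J$ (where indeed $q_J$ is in the open positive fiber, hence $D_i(q_J)>0$), but the statement of $(\mathrm{ii})$ asserts non-vanishing of $\Omega(\Delta_J)$ on all of $\mathcal{A}$, not just inside $\Delta_J$. That stronger claim needs an argument that no $k+1$ of the affine hyperplanes coincide over any $Y\in\mathcal{A}$, which does not follow from the parity-dual picture you invoke; the paper treats it as part of the standing genericity assumptions accompanying Conjecture~\ref{con:main}.
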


\begin{proof}
We note that $(\mathrm{ii})$ follows immediately from the expression of the volume function given in Proposition~\ref{prop:detiscan},  $(\mathrm{i})$ is  %Proposition~\ref{prop_faneq} together with 
Theorem~\ref{triangamp}, and $(\mathrm{iii})$ is Theorem~\ref{th:jkfibers}. 
\end{proof}

\subsection{Triangulations of amplituhedra conjugate to polytopes} \label{subsec:triangconjpol}
We fix $\mathcal{A}:=\mathcal{A}_{n,k,m}$ where $m=n-k-1$. Let $\pi:\Gr(k,n)\dashrightarrow\Gr(k,n-1)$ be the rational map from Definition~\ref{def:pi} and let $\mathcal{F}_\mathcal{A}(Y)$ be the fan from Definition~\ref{def:sec_fan} for $Y\in\mathcal{A}$.
We also assume that Conjecture~\ref{con:main} holds true and the combinatorial structure of the fan $\mathcal{F}_{\mathcal{A}}(Y)$ does not depend on the point $Y\in \mathcal{A}$. Our main goal in this section is to prove that $\mathcal{F}_\mathcal{A}(Y)$ is the secondary fan of $\mathcal{A}$. Moreover, we study positroidal dissections of $\mathcal{A}$ coming from $\mathcal{F}_\mathcal{A}(Y)$.
In analogy with the case of polytopes, we will call such dissections \emph{regular}.

\medskip
\noindent{\bf Boundaries of fibers.} Recall that the codimension one boundaries of 
$\Gr_+(k,n)$ are the positroid cells $S_i$ defined by $p_{I_i}=0$ for cyclic $k$-intervals $I_i$. Each codimension one positroid cell
intersects a full fiber $\pi^{-1}(Y)$ by a linear hyperplane. As in Definition~\ref{def:sec_fan}, we denote the collection of resulting hyperplanes by
\begin{equation*}
\mathcal{H}_i(Y) =  \pi^{-1}(Y) \cap S_i\quad\text{for}\quad i=1,\ldots,n.
\end{equation*}
Hence, in each full fiber $\pi^{-1}(Y)$ there is a collection of hyperplanes $\mathcal{H}_1(Y),\ldots,\mathcal{H}_n(Y)$ such that the positive fiber is 
bounded by some of them. Where there is no
ambiguity, we 
denote them simply by $\mathcal{H}_i$.

\medskip

Let $\rho_{j_1},\ldots,\rho_{j_k}$ be a collection of rays of $\mathcal{F}_{\mathcal{A}}(Y)$. The positivity of  functions in \eqref{eq:conj} guaranties that the ray generators $W^{I_{j_1}},\ldots,W^{I_{j_k}}$ in \eqref{eq:W}
form a basis for $\R^k$. Now, for every subset $J\subset [k]$, we let 
\[
O_J(Y)= \{(x_1,\ldots,x_k)\,|\, x_i\geq 0 \text{ for } i\in J  \text{ and }x_i\leq 0 \text{ otherwise} \}
\]
be the orthant given by the basis $W^{I_{j_1}},\ldots,W^{I_{j_k}}$. Then by the positivity of functions in \eqref{eq:conj} we have:
\begin{Lemma}\label{lem:combtype}
Let $Y,Y'$ be two points of $\A$ and let $\rho_{j_1},\ldots,\rho_{j_k}$ be a collection of rays of $\mathcal{F}_{\mathcal{A}}(Y)$. Assume that $\rho_{j_0}\in O_J(Y)$ for some $J\in [k]$. Then 
$\rho_{j_0}\in O_J(Y')$.
In particular, if $\rho_{j_0}$ belongs to the cone generated by $\rho_{j_1},\ldots,\rho_{j_k}$ over $Y$, then the same is true over $Y'$.
\end{Lemma}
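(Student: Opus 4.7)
The plan is to show that the sign of each coordinate of $\rho_{j_0}$ in the basis $W^{I_{j_1}},\ldots,W^{I_{j_k}}$ is determined purely by the combinatorics of the index sets, and not by the specific choice of $Y\in\mathcal{A}$. Writing $\rho_{j_0}=\sum_{i=1}^k c_i\, W^{I_{j_i}}$, Cramer's rule gives
\[
c_i \;=\; \frac{\det\!\bigl(W^{I_{j_1}},\ldots,W^{I_{j_0}},\ldots,W^{I_{j_k}}\bigr)}{\det\!\bigl(W^{I_{j_1}},\ldots,W^{I_{j_i}},\ldots,W^{I_{j_k}}\bigr)},
\]
where in the numerator $W^{I_{j_0}}$ replaces $W^{I_{j_i}}$. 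The membership of $\rho_{j_0}$ in the orthant $O_J(Y)$ is encoded by the tuple of signs $(\mathrm{sgn}(c_1),\ldots,\mathrm{sgn}(c_k))$, so it suffices to show that each $\mathrm{sgn}(c_i)$ is independent of $Y\in\mathcal{A}$.

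The key input is Theorem~\ref{geomchar}, which identifies every determinant of the form $\det(W^{I_{\ell_1}},\ldots,W^{I_{\ell_k}})$ with the function $\langle Y\,\bar I_{\ell_1}\cap\cdots\cap\bar I_{\ell_k}\rangle$ up to the explicit sign $(-1)^{\xi_{\{\ell_1,\ldots,\ell_k\}}}$ coming from the shifts $s_{I_\ell}$. Combining this with Conjecture~\ref{con:main}, each such function is, up to a sign depending only on the index set (via $\epsilon_{L\bar L}(-1)^{\xi_L}$), strictly positive throughout $\mathcal{A}$. Hence both the numerator and the denominator of the Cramer ratio have a well-defined, nowhere-vanishing sign on $\mathcal{A}$, and the sign of $c_i$ is the product of two combinatorial signs depending only on $\{j_0,j_1,\ldots,j_k\}\setminus\{j_i\}$ and $\{j_1,\ldots,j_k\}$. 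In particular, $\mathrm{sgn}(c_i)$ at $Y$ agrees with $\mathrm{sgn}(c_i)$ at $Y'$, so $\rho_{j_0}\in O_J(Y')$.

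For the "in particular" statement, belonging to the cone generated by $\rho_{j_1},\ldots,\rho_{j_k}$ is precisely the case $J=[k]$, i.e.\ all $c_i\ge 0$. Since this sign pattern is preserved when passing from $Y$ to $Y'$ by the argument above, the cone membership transfers as well.

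The only delicate point is bookkeeping: one must verify that the two applications of Theorem~\ref{geomchar} produce sign factors that depend solely on the index sets (and not on $Y$), so that their ratio is a well-defined constant independent of the base point. This is a routine but careful check using the formula $\xi_L=\sum_{\ell\in L}s_{I_\ell}$ and the conventional ordering in $\epsilon_{L\bar L}$; no geometric input is required beyond the already assumed nonvanishing provided by Conjecture~\ref{con:main}. Connectedness of $\mathcal{A}$ (as the image of the connected space $\Gr_+(k,n)$) gives a convenient conceptual backup: since the $c_i$ are continuous on $\mathcal{A}$ and nonvanishing by the conjecture, their signs are constant on $\mathcal{A}$.
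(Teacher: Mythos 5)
Your argument is correct, and it is worth noting that your primary route differs from the paper's. The paper argues by contradiction along a path: it connects $Y$ and $Y'$ by a path $\gamma$ in the interior of $\mathcal{A}$ and observes that a change of orthant would force $\rho_{j_0}$ onto a coordinate hyperplane at some intermediate $Y_t$, i.e.\ some $k$-subset of the rays would become degenerate there, contradicting Conjecture~\ref{con:main}; so it only uses the \emph{nonvanishing} of the determinants plus continuity and (path-)connectedness. You instead compute the coordinates of $\rho_{j_0}$ in the basis $W^{I_{j_1}},\ldots,W^{I_{j_k}}$ by Cramer's rule and observe that, by Theorem~\ref{geomchar} combined with Conjecture~\ref{con:main}, each determinant $\det(W^{I_{\ell_1}},\ldots,W^{I_{\ell_k}})$ has sign $\epsilon_{L\bar L}$ (for increasing order, and hence a $Y$-independent sign for any fixed column ordering) — exactly the computation the paper itself performs in Proposition~\ref{prop_faneq} — so each Cramer ratio $c_i$ has a sign that is a combinatorial constant. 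This is a slightly stronger and more self-contained conclusion: it pins down the sign pattern explicitly rather than merely showing it is locally constant, and it needs no connectedness or path argument (your closing "conceptual backup" via continuity is essentially the paper's proof). The only care needed in your route, which you correctly flag, is that the denominators are nonzero at every $Y$ (so Cramer's rule applies) and that reordering the columns of the numerator determinant into increasing index order contributes only a $Y$-independent permutation sign.
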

\begin{proof}
On the contrary, suppose that $\rho_{j_0}\in O_{J'}(Y')$ for some $J'\ne J$.
Consider a path $\gamma:[0,1]\to \mathcal{A}$ in the interior of $\mathcal{A}$ which connects the points $Y, Y'\in \mathcal{A}$ and denote $\gamma(t)$ by $Y_t$.
Since $\rho_{j_0}$ belongs to different orthants over $Y$ and $Y'$, there is a point $t\in [0,1]$ such that $\rho_{j_0}$ belongs to a coordinate hyperplane over $Y_t$. Therefore, the rays $\rho_{j_0},\ldots,\rho_{j_k}$ are not in general position over $Y_t$ which contradicts  Conjecture~\ref{con:main}.
\end{proof}

\begin{Proposition}\label{prop_faneq} 
The fan $\mathcal{F}_\mathcal{A}(Y)$ is combinatorially equivalent to the secondary fan of a cyclic polytope.
\end{Proposition}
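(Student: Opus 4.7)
The plan is to view both fans as chamber fans in $\mathbb{R}^k$ (with $k = n-m-1$) and compare their underlying oriented matroids. Since the combinatorial type of a chamber fan of a ray configuration $\rho_1, \ldots, \rho_n \subset \mathbb{R}^k$ is determined by the chirotope $J \mapsto \sgn\det(\rho_{j_1}, \ldots, \rho_{j_k})$ up to a global sign reversal, it suffices to show that the two chirotopes agree up to such a sign, under the natural identification of the rays by their indices in $[n]$.

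For the chirotope of $\mathcal{F}_{\mathcal{A}}(Y)$, I would combine Theorem~\ref{geomchar} with Conjecture~\ref{con:main} (which is in force throughout \S\ref{subsec:triangconjpol}); together they give
\[
\epsilon_{J\bar J} \cdot \det(W^{I_{j_1}}, \ldots, W^{I_{j_k}}) > 0
\]
for every $k$-subset $J = \{j_1 < \cdots < j_k\} \subset [n]$. Hence the chirotope of the configuration $\{W^{I_j}\}_{j \in [n]}$ is $J \mapsto \epsilon_{J\bar J}$, manifestly independent of $Y$.

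For the chirotope of the secondary fan of the cyclic polytope $C := \mathcal{A}_{n,1,m}$, I would apply Theorem~\ref{thgaledual} to identify its rays with the Gale duals $v_1^\perp, \ldots, v_n^\perp \in \mathbb{R}^{n-m-1}=\mathbb{R}^k$ of the vertices of $C$ on the moment curve. The Gale duality identity of \S\ref{sec:galeduality} expresses $\det(v_{j_1}^\perp, \ldots, v_{j_k}^\perp)$ as a fixed nonzero constant $c$ times $\epsilon_{J\bar J}$ times the Vandermonde determinant $\det(\tilde v_{i_1}, \ldots, \tilde v_{i_{m+1}})$, where $\tilde v_i = (1, t_i, \ldots, t_i^m)^T$ and $\bar J = \{i_1 < \cdots < i_{m+1}\}$. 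Since $\prod_{a<b}(t_{i_b} - t_{i_a}) > 0$ on the moment curve, the chirotope of the Gale dual configuration is $J \mapsto \sgn(c) \cdot \epsilon_{J\bar J}$, which agrees with the first chirotope up to the global sign $\sgn(c)$. Under the identification $W^{I_j} \leftrightarrow v_j^\perp$, the two chamber fans therefore have the same combinatorial type.

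The main obstacle will be the careful sign bookkeeping: matching the exponent $\xi_J$ coming from Lemma~\ref{dets} against the $\epsilon$-factor from Gale duality (and keeping track of the ambient orientation of $\mathbb{R}^k$), so that both chirotopes genuinely reduce to $\epsilon_{J\bar J}$ up to a uniform sign, rather than a subset-dependent sign that would spoil the equivalence. Modulo this audit, the conclusion follows from the standard dictionary between real ray configurations with identical chirotopes (up to a global sign) and the combinatorial type of their chamber fans.
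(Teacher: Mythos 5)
Your first step (the chirotope of $\{W^{I_j}\}_{j\in[n]}$ is $J\mapsto \epsilon_{J\bar J}$, via Theorem~\ref{geomchar} and Conjecture~\ref{con:main}) is fine and is exactly the positivity input the paper uses. The gap is the bridging claim that the combinatorial type of the chamber fan of a ray configuration is determined by its chirotope up to a global sign. That is false: membership of a single ray in a simplicial cone spanned by a basis is an oriented-matroid condition, but the full chamber fan records which collections of cones $C_I$ share an interior point, i.e.\ (through Gale duality) which subdivisions are \emph{regular}, and regularity is a realization-dependent, not an oriented-matroid, property. The paper itself flags the relevant counterexample source: by \cite[Lemma 4.9]{athanasiadis2000fiber}, cyclic polytopes with the same alternating chirotope but different choices of points on the moment curve can have combinatorially different secondary fans. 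So matching the chirotope of $\{W^{I_j}\}$ with that of the Gale dual of the fixed cyclic polytope $\mathcal{A}_{n,1,m}$ does not allow you to conclude that the two chamber fans are isomorphic. A symptom of the problem: if your bridging claim were true, the independence of $\mathcal{F}_{\mathcal{A}}(Y)$ from $Y$ would be automatic, whereas the paper only \emph{expects} this as a strengthening of Conjecture~\ref{con:main} (see the remark before Lemma~\ref{lem:combtype}); your argument silently proves more than the proposition claims, which is how the unjustified step sneaks in.

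The way to close the gap is to reverse the direction of the comparison, as the paper does: instead of fixing a cyclic polytope and comparing chirotopes, take the Gale transform $\{V_1,\ldots,V_n\}$ of the actual configuration $\{W^{I_1},\ldots,W^{I_n}\}$ attached to the given $Y$. The Gale-duality sign identity together with Theorem~\ref{geomchar} gives $\det(V_{\ell_1},\ldots,V_{\ell_{m+1}})=\epsilon_{J\bar J}(-1)^{\xi_J}\langle Y\,\bar I_{j_1}\cap\cdots\cap\bar I_{j_k}\rangle$, which is strictly positive by Conjecture~\ref{con:main}; hence the $V_i$ form a totally positive configuration, i.e.\ the vertex set of a (realization of a) cyclic polytope $P$ depending on $Y$. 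Since $\{W^{I_j}\}$ is by construction the Gale dual of $V(P)$, Theorem~\ref{thgaledual} identifies the chamber fan $\mathcal{F}_{\mathcal{A}}(Y)$ with the secondary fan of that particular $P$ — establishing equivalence with the secondary fan of \emph{some} cyclic polytope, realization included, which is all Proposition~\ref{prop_faneq} asserts and which sidesteps the oriented-matroid-invariance issue entirely.
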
 
\begin{proof}
Let $\{V_1,\ldots, V_n\}$ be the Gale transform of $\{W^{I_{1}},\ldots,W^{I_{n}}\}$.
For any $k$-subset $J=\{j_1,\ldots,j_k\}$ and its complement $\bar{J}=\{\ell_1,\ldots,\ell_{m+1}\}$, we know from \S\ref{sec:galeduality} that
\[
\det(W^{I_{j_1}},\ldots,W^{I_{j_k}}) = \epsilon_{J,\bar{J}} \det(V_{\ell_1},\ldots, V_{\ell_{m+1}}).
\]
On the other hand, by Theorem~\ref{geomchar} we have that
\[
\det(W^{I_{j_1}},\ldots,W^{I_{j_k}}) =  (-1)^{\xi_J}\cdot \langle Y \bar{I}_{j_1}\cap\cdots\cap \bar{I}_{j_k}\rangle.
\]
Hence,
\[
\det(V_{\ell_1},\ldots, V_{\ell_{m+1}}) = \epsilon_{J,\bar{J}}  (-1)^{\xi_J} \cdot \langle Y \bar{I}_{j_1}\cap\cdots\cap \bar{I}_{j_k}\rangle.
\]
The right side of the above equation is strictly positive by Conjecture~\ref{con:main}, hence $\det(V_{\ell_1},\ldots, V_{\ell_{m+1}})$ 
is strictly positive and so
$V_i$'s are the vertices of a cyclic polytope $P$. Moreover, $\mathcal{F}_\mathcal{A}(Y)$ is the chamber fan of $\{W^{I_{j_1}},\ldots,W^{I_{j_k}}\}$ which is 
the Gale transform of $V(P)=\{V_{\ell_1},\ldots, V_{\ell_{m+1}}\}$, hence it is the secondary fan of a cyclic polytope $P$, see e.g.~Theorem~\ref{thgaledual}. 
\end{proof}

We call $\mathcal{F}_\mathcal{A}$ the {\em secondary fan} of $\A$. From now on, whenever we refer to $\mathcal{F}_\mathcal{A}$, we mean the combinatorial type $\mathcal{F}_\mathcal{A}(Y)$ for some $Y\in\mathcal{A}$. 
In the following, given a point $Y\in \mathcal{A}$, we denote $\mathcal{F}^+_\mathcal{A}(Y)$ for the normal fan of the positive fiber over $Y$. We now show that
the normal fan $\mathcal{F}^+_\mathcal{A}(Y)$ is a coarsening of $\mathcal{F}_\mathcal{A}(Y)$.
 
\begin{Corollary}\label{cor:min}
$\mathcal{F}_\mathcal{A}$ is the minimal combinatorial common refinement of 
$\mathcal{F}^+_\mathcal{A}(Y)$ over all points $Y\in \mathcal{A}$.
\end{Corollary}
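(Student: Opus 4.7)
The plan is to show first that $\mathcal{F}_{\mathcal{A}}$ is a common refinement of the family $\{\mathcal{F}^+_{\mathcal{A}}(Y)\}_{Y\in\mathcal{A}}$, and then to deduce minimality by exhibiting each cone $C_I := \mathrm{cone}(W^{I_i} : i\in I)$ (for a $k$-subset $I\subseteq[n]$) as a maximal cone of some $\mathcal{F}^+_{\mathcal{A}}(Y)$.

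For the first step, I would argue as follows. By Example~\ref{cor:conjfibers} the positive fiber $\pi^{-1}_+(Y)$ is a $k$-dimensional polytope in the affine chart \eqref{eq:affinelambda} whose facets lie on (a subset of) the hyperplanes $\mathcal{H}_i(Y)$. Each maximal cone of the normal fan $\mathcal{F}^+_{\mathcal{A}}(Y)$ is therefore of the form $C_I$ for the $k$-subset $I$ indexing the facets incident to a given vertex of $\pi^{-1}_+(Y)$. By Definition~\ref{def:chamberfan}, each such $C_I$ decomposes as a union of cones of $\mathcal{F}_{\mathcal{A}}(Y)$, and invoking the $Y$-independence of the combinatorial type (Lemma~\ref{lem:combtype} together with Conjecture~\ref{con:main}) yields that $\mathcal{F}_{\mathcal{A}}$ refines every $\mathcal{F}^+_{\mathcal{A}}(Y)$.

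For minimality, the idea is that by Lemma~\ref{lem:gentriangles}, every $k$-subset $I\subseteq[n]$ indexes a generalized triangle $\Delta_I\subseteq\mathcal{A}$, obtained as the image of a positroid cell $S_{\widetilde I}\subset\Gr_+(k,n)$ on which $p_{I_j}=0$ exactly for $j\in I$. For $Y$ in the interior of $\Delta_I$, the unique preimage $q_I = \pi^{-1}(Y)\cap\overline{S_{\widetilde I}}$ produced in Theorem~\ref{thm:resfiber} lies in the interior of the stratum $S_{\widetilde I}$, so it satisfies $p_{I_j}(q_I)=0$ for $j\in I$ and $p_{I_i}(q_I)>0$ for $i\notin I$. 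Hence $q_I$ is a vertex of $\pi^{-1}_+(Y)$ at which exactly the facets $\{\mathcal{H}_i(Y):i\in I\}$ meet; its normal cone is precisely $C_I$, so $C_I$ is a maximal cone of $\mathcal{F}^+_{\mathcal{A}}(Y)$.

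Combining the two steps, any fan $\mathcal{G}$ refining each $\mathcal{F}^+_{\mathcal{A}}(Y)$ must have every $C_I$ as a union of its cones; by Definition~\ref{def:chamberfan} the chamber fan $\mathcal{F}_{\mathcal{A}}$ is the coarsest fan with this property, so $\mathcal{G}$ refines $\mathcal{F}_{\mathcal{A}}$, establishing minimality. The key input underlying the argument is the diffeomorphism property of $\pi_+$ on the positroid stratum $S_{\widetilde I}$ (Conjecture~\ref{conjtriangles}, verified for $k=n-m-1$), which simultaneously guarantees both the non-emptiness of $\mathrm{int}(\Delta_I)$ for every $k$-subset $I$ and the existence of the vertex $q_I$ in the positive fiber with exactly the expected incident facets; this is the one place where the main technical obstacle lies, but it is already available in the conjugate-to-polytope setting in which we work.
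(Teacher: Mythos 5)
Your proof is correct and follows essentially the same route as the paper: the common-refinement step rests on Lemma~\ref{lem:combtype} together with the assumed $Y$-independence of the combinatorial type, and minimality is obtained from Lemma~\ref{lem:gentriangles} by exhibiting, for every $k$-subset $I$, a point $Y$ whose positive-fiber normal fan $\mathcal{F}^+_\mathcal{A}(Y)$ contains the cone $C_I$. Your explicit realization of the vertex $q_I$ via Theorem~\ref{thm:resfiber} (and Conjecture~\ref{conjtriangles}, available in the conjugate-to-polytope case) merely spells out a step the paper leaves implicit.
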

\begin{proof}
The fact that $\mathcal{F}_\mathcal{A}$ is a common refinement is obvious from Lemma~\ref{lem:combtype} and our assumption that $\mathcal{F}_\mathcal{A}(Y)$ does not depend on $Y$.
To prove the minimality, note that by Lemma~\ref{lem:gentriangles}, for any $J\in \binom{[n]}{k}$ there is  a unique generalized triangle $\Delta_J$ which is the image of a positroid cell $S_{\widetilde{J}}$
in $\Gr_+(k,n)$ for which a cyclic $k$-interval $I_j$ is in $\widetilde{J}$ if and only of $j\not\in J$. Hence, for any $J\in \binom{[n]}{k}$, there exists a point $Y\in \mathcal{A}$ such that the cone $C_J$ generated by the rays indexed by $J$ is a cone in $\mathcal{F}^+_\mathcal{A}(Y)$. Therefore, any common refinement of the normal fans of positive fibers $\mathcal{F}^+_\mathcal{A}(Y)$ over all points $Y\in \mathcal{A}$ should contain the chamber fan with rays $\rho_1,\ldots, \rho_n$. Thus, by Definition~\ref{def:sec_fan}, $\mathcal{F}_\mathcal{A}$ is the minimal common refinement of $\mathcal{F}^+_\mathcal{A}(Y)$.
\end{proof}

Now we are ready to relate the fan $\mathcal{F}_\mathcal{A}$ to the positroidal  subdivisions of $\mathcal{A}$.  In the following, for any cone $\sigma\in\mathcal{F}_\mathcal{A}$ we define a collection of positroid cells $\mathcal{C}_\sigma$ and prove that it provides a subdivision of $\A$. 
As before, we let $C_{J}$ be the cone generated by the rays of $\mathcal{F}_\mathcal{A}$ indexed by $J\subset [n]$. Note that since $\mathcal{F}_\mathcal{A}$ is a chamber fan for the rays $\rho_1,\ldots,\rho_n$, for any cone $\sigma\in\mathcal{F}_\mathcal{A}$ and any $J\subset [n]$ we can check whether $\sigma \subset C_J$.

\begin{definition}\label{def:triangamp}
{\rm Let $\sigma$ be a cone in $\mathcal{F}_\mathcal{A}$ and $\widetilde{\mathcal{C}}_\sigma$ be the collection of $J$ such that $\sigma$ is a subset of the cone $C_J$ of $\mathcal{F}_\mathcal{A}$. Then we define 
\[
\mathcal{C}_\sigma := \left\{ \text{positroid cell } S_{\widetilde{J}} \,\,  \Big| \,\,  J \text{ is a minimal element of }  \widetilde{\mathcal{C}}_\sigma \text{ by inclusion}\right\},
\]
where $S_{\widetilde{J}}$ is the unique positroid cell corresponding to $J$ given in Lemma~\ref{lem:gentriangles}.
}\end{definition}

The main result of this section is the following theorem.
\begin{Theorem} \label{triangamp}
Let  $\mathcal{F}_\mathcal{A}$ be the secondary fan of $\mathcal{A}$. Then each cone $\sigma$ of $\mathcal{F}_\mathcal{A}$  corresponds to a positroidal dissection $\mathcal{C}_\sigma$. The inclusion of two cones corresponds to the coarsening of the associated dissections. In particular, the maximal cones of $\mathcal{F}_\mathcal{A}$ correspond to positroidal triangulations of $\A$. Under this identification a maximal cone $\sigma$ corresponds to the triangulation given by $\mathcal{C}_\sigma$.
\end{Theorem}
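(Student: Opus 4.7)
The plan is to reduce the statement to the classical theory of secondary fans of cyclic polytopes, combining two ingredients already developed in the paper: the Gale-dual identification of $\mathcal{F}_\mathcal{A}$ from Proposition~\ref{prop_faneq}, and the parity-duality bijection on positroidal dissections from Theorem~\ref{th:parityduality} together with Lemma~\ref{lem:gentriangles}.

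First, I would invoke the proof of Proposition~\ref{prop_faneq}: the Gale dual of $\{W^{I_1},\ldots,W^{I_n}\}$ is the vertex set $\{V_1,\ldots,V_n\}$ of a cyclic polytope $P := \mathcal{A}_{n,1,m}$, and under this identification $\mathcal{F}_\mathcal{A}$ coincides combinatorially with the secondary fan of $P$. Applying Theorem~\ref{thgaledual}, together with the extension of Theorem~\ref{fiberpolytop} to arbitrary (not only maximal) cones of the secondary fan, each cone $\sigma \in \mathcal{F}_\mathcal{A}$ then corresponds to a regular subdivision $R_\sigma$ of $P$ whose maximal cells are $\bar{\Delta}_J := \mathrm{conv}\{V_j : j \in \bar{J}\}$, indexed exactly by those $J$ that are minimal by inclusion in $\widetilde{\mathcal{C}}_\sigma$; the contravariance here is that smaller $J$ yields larger $\bar{J}$, hence a more-vertex (bigger) cell of the subdivision. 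When $\sigma$ is maximal, every such $J$ has size exactly $k$, the cells $\bar{\Delta}_J$ are $m$-simplices, and $R_\sigma$ is a regular triangulation of $P$.

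Next, I would transfer $R_\sigma$ across parity duality. By Lemma~\ref{lem:gentriangles}, each $r$-subset $J \subset [n]$ with $1 \leq r \leq k$ labels a unique positroid cell $S_{\widetilde{J}} \subset \Gr_+(k,n)$ whose image $\Delta_J = \pi_+(S_{\widetilde{J}})$ is a maximal cell of some positroidal subdivision of $\mathcal{A}$, and the assignment $\bar{\Delta}_J \mapsto \Delta_J$ is a bijection on maximal cells. Theorem~\ref{th:parityduality} extends this cell-by-cell bijection to a bijection between positroidal subdivisions of $P$ and positroidal dissections of $\mathcal{A}$. Under it, $R_\sigma$ is sent to exactly the collection $\mathcal{C}_\sigma$ of Definition~\ref{def:triangamp}, proving that $\mathcal{C}_\sigma$ is a positroidal dissection; when $\sigma$ is maximal it is a positroidal triangulation, giving the last two assertions of the theorem.

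Finally, the coarsening correspondence follows from the chamber-fan structure of $\mathcal{F}_\mathcal{A}$: if $\sigma \subseteq \sigma'$ then every cone $C_J$ containing $\sigma'$ also contains $\sigma$, so $\widetilde{\mathcal{C}}_{\sigma'} \subseteq \widetilde{\mathcal{C}}_\sigma$, and passing to minimal-by-inclusion elements shows that every maximal cell of $\mathcal{C}_{\sigma'}$ sits inside some maximal cell of $\mathcal{C}_\sigma$, i.e.\ $\mathcal{C}_\sigma$ is a coarsening of $\mathcal{C}_{\sigma'}$. The step I expect to be the main obstacle is a careful index-matching verification along the composed chain $\{\text{cones of }\mathcal{F}_\mathcal{A}\} \leftrightarrow \{\text{subdivisions of }P\} \leftrightarrow \{\text{positroidal dissections of }\mathcal{A}\}$: one must check that the labellings of rays $W^{I_i}$ by $i \in [n]$ are compatible under both Gale duality ($W^{I_i} \leftrightarrow V_i$) and parity duality ($\Delta_J \leftrightarrow \bar{\Delta}_J$), so that $\sigma$ lands on exactly $\mathcal{C}_\sigma$ rather than on some reindexed variant.
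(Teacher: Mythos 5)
Your proposal is correct in outline, but it takes a genuinely different route from the paper's proof, so let me compare the two. The paper never transports subdivisions across parity duality: it argues intrinsically with the fibers. Using Corollary~\ref{cor:min} (that $\mathcal{F}_\mathcal{A}$ is the minimal common refinement of the normal fans $\mathcal{F}^+_\mathcal{A}(Y)$ of the positive fibers), it associates to each $Y\in\mathcal{A}$ the minimal cone $\sigma(Y)\supseteq\sigma$ of $\mathcal{F}^+_\mathcal{A}(Y)$ and the map $f_\sigma$ sending $Y$ to the cell $S_{\widetilde J}$ of Lemma~\ref{lem:gentriangles} with $\sigma(Y)=C_J$; covering follows because $\sigma(Y)$ is a cone of the normal fan of $\pi_+^{-1}(Y)$, so the corresponding cell meets the positive fiber, and disjointness follows because $\sigma(Y')=\sigma(Y)$ for $Y'$ in the image, with Lemma~\ref{lem:gentriangles} entering only to identify which cells occur. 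You instead reduce the statement to classical secondary-fan theory of cyclic polytopes via Proposition~\ref{prop_faneq} and then import the dissection-level part of Theorem~\ref{th:parityduality}. This is legitimate given the results as quoted in the paper and is arguably shorter, but it leans on two points the paper's argument avoids: first, the Gale-dual dictionary for arbitrary (not only maximal) cones, with the ``minimal $J$'' description of maximal cells, is only stated in the paper for maximal cones (Theorem~\ref{thgaledual}); the general version is standard and does match Definition~\ref{def:triangamp}, but you are invoking it rather than proving it. Second, your $P$ is not literally $\mathcal{A}_{n,1,m}(Z)$: its vertices are the $Y$-dependent Gale duals $V_i$ of the rays $W^{I_i}$, so one must add that being a subdivision of a cyclic polytope depends only on the (alternating) oriented matroid, which Conjecture~\ref{con:main} guarantees both labelled configurations share --- this is exactly the index-matching step you flag at the end but do not carry out, and it is routine once stated. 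Finally, your coarsening argument is made on the polytope side and then implicitly transferred; the paper is equally terse on this point, so I would not count it against you. In short, the paper's proof buys a self-contained, fiber-based argument (which is the conceptual point of the section), while yours buys brevity and a transparent link to Gelfand--Kapranov--Zelevinsky theory at the price of relying on the full strength of the Galashin--Lam dissection duality.
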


\begin{proof}
We will first provide another presentation for the positroid cells in $\mathcal{C}_\sigma$. To do this, for a fixed cone $\sigma$ of $\mathcal{F}_\mathcal{A}$ and for a point $Y\in \mathcal{A}$, let $\sigma(Y)$ be the minimal cone of $\mathcal{F}^+_\mathcal{A}(Y)$ (by inclusion) which contains $\sigma$. Since by Corollary~\ref{cor:min} the fan $\mathcal{F}_\mathcal{A}$ is the minimal combinatorial common refinement of the fans $\mathcal{F}^+_\mathcal{A}(Y)$, the cone $\sigma(Y)$ uniquely exists. Let $f_\sigma: \mathcal{A}\rightarrow \{\text{positroid cells}\}$ be the following map:
\[
    f_\sigma: Y\mapsto \text{ the positroid cell } S_{\widetilde{J}}, \text{ where } \sigma(Y)=C_J.
\]
Moreover, note that by Lemma~\ref{lem:gentriangles} for every minimal element $J$ of $\widetilde{\mathcal{C}}_\sigma$, the cone $C_J$ appears in $\mathcal{F}^+_\mathcal{A}(Y)$ for some $Y\in \mathcal{A}$. Hence, the collection $\mathcal{C}_\sigma$ coincides with $f_\sigma(\mathcal{A})$.

We now show that $\mathcal{C}_\sigma$ gives a positroidal subdivision of $\A$. First, note that $\sigma(Y)$ is a cone of $\mathcal{F}^+_\mathcal{A}(Y)$, hence its corresponding positroid cell intersects the positive fiber $\pi_+^{-1}(Y)$. Thus, the collection of positroid cells $f_\sigma(\mathcal{A})=\mathcal{C}_\sigma$ covers $\mathcal{A}$. 
Finally, for any $Y' \in \pi(f_\sigma(Y))$ we have  $\sigma(Y)=\sigma(Y')$ and so 
$f_\sigma(Y') =f_\sigma(Y)$. This implies that the images of the positroid cells in $\mathcal{C}_\sigma$ do not overlap, which completes the proof.
\end{proof}

\begin{remark}
The construction above can be generalized to any Grassmann polytope with linear fibers; See \S\ref{subsec:Grassmann} for more details.  
Moreover, by construction, dissections from Theorem~\ref{triangamp} are in fact good dissections. We refer to them as \emph{regular subdivisions} because via parity duality, they are in bijection with regular subdivisions of cyclic polytopes. 
Non-regular subdivisions of polytopes correspond to a family of dissections of amplituhedra conjugate to polytopes which are not gathered in Theorem~\ref{triangamp}.
\end{remark}

\section{Jeffrey-Kirwan residue for linear fibers}\label{sec:JK}
In this section, we first recall the original definition of Jeffrey-Kirwan residue 
from \cite{JEFFREY1995291, BrionVergne}. Then, we extend it to accommodate its application in the amplituhedron theory such as computing the residues of fiber volume forms. In particular, we make precise connections between the residue computation and the geometric framework developed in \S\ref{sec:polytopes} and \S\ref{sec:linear}.

\subsection{Jeffrey-Kirwan residue} \label{subsec:jkres}
Jeffrey and Kirwan introduced the notion of \emph{Jeffrey-Kirwan residue} (JK-residue)
in their study of localization of group actions \cite{JEFFREY1995291}. More precisely, given a symplectic manifold and a group action on it, the JK-residue relates elements of the equivariant cohomology of the manifold to the ones of the cohomology of its symplectic quotient. 
They were inspired by Witten \cite{Witten:1992xu} who re-examined the non-abelian localization of Duistermaat and Heckman \cite{Duistermaat1982} and applied it in the context of $2$-dim Yang-Mills theory. 
Since then, JK-residue has played an increasingly important role in physics, with particular applications in \emph{supersymmetric localization} for gauge theories in various dimensions \cite{Benini:2013xpa,Closset:2015rna,Benini:2015noa}.
The notion of JK-residue has also been extended out of the realm of localization, and can be generally regarded as an operation on rational differential forms with poles on hyperplane arrangements 
\cite{BrionVergne,2004InMat.158..453S}. Furthermore, the study of the connection between JK-residues and amplituhedra is started in \cite{ferro2018amplituhedron}.

\medskip
\paragraph{Classical JK-residue.}
Fix $1 \leq r \leq n$, and consider an arrangement of hyperplanes $\{\mathcal{H}_{i} \}_{i \in [n]}$ in $\mathbb{R}^r$ where $\mathcal{H}_{i}=\{x \in \R^r\,|\, \langle x, \beta_i\rangle\}=0$. Here, $\beta_i$ is the normal vector of the hyperplane $\mathcal{H}_{i}$ for each $i$.  
Let $I$ be an $r$-subset of $[n]$ such that $\{\beta_i \}_{i \in I}$ forms a basis of $\mathbb{R}^r$. Let $f_I$ be the rational function given by:
\begin{equation}\label{eq:rational_function}
f_I(x) = \frac{1}{\prod_{i\in I}\langle x, \beta_i \rangle}.
\end{equation}
Note that the function $f_I$ has simple poles on the hyperplanes $\{\mathcal{H}_i\}_{i \in I}$. 
Then we define:

\begin{definition}
\label{def:jkorig}
{\rm The \emph{JK-residue} of the function $f_I$ at a point $\xi \in \mathbb{R}^r$ is defined as:
\begin{equation}\label{def:jkonbasic}
    \mathrm{JK}_{\xi}(f_I) := \begin{cases} {\left| \mbox{\textup{det}}(\{ \beta_i\}_{i \in I}) \right|}^{-1} &\mbox{if } \xi \in C_I \\
0 & \mbox{if } \xi \not \in C_I \end{cases},
\end{equation}
where $C_I$ denotes the positive span of the vectors $\{ \beta_i\}_{i \in I}$.
}\end{definition}

\begin{remark}
Following \cite{BrionVergne}, the definition of  $\mathrm{JK}_{\xi}(\cdot)$ can be extended to the space of rational functions $\phi$ with poles on the hyperplane arrangement $\{\mathcal{H}_i \}_{i \in [n]}$. The space of such functions is graded by the degree, and JK-residue acts non-trivially only on the part of degree $-r$. In particular, every such function $\phi$ can be decomposed in terms of $f_I$'s and other fractions whose denominators correspond to some vectors $\beta$ which do not necessarily span $\mathbb{R}^r$. 
To make this more precise, given a flag $F$
\begin{equation*}
    0 \subset F_1 \subset F_2 \subset \cdots \subset F_r=\mathbb{R}^r,
\end{equation*}
we consider the vectors $\{ \kappa^F_j\}_{j \in [r]}$ with $\kappa^F_j:=\sum_{i:\beta_i \in F_j} \beta_i$ and define a function $\nu(F)$ and a cone $C_F$ as:
\[
\nu(F):=\mbox{sign} \left(\mbox{det}(\kappa^F_1,\ldots,\kappa^F_r)\right) \quad\text{  and }\quad C_F:=\mbox{span}_+(\kappa^F_1,\ldots,\kappa^F_r).
\]
Moreover, given a vector $\xi \in \mathbb{R}^k$, we denote $\mathcal{FL}^+(\xi)$ for the set of flags $F$ such that $\xi$ belongs to (the interior of) the cone $C_F$. Then by \cite[Theorem 2.6]{2004InMat.158..453S}, we can express the action of $\mathrm{JK}_{\xi}(\cdot)$ on rational functions $\phi$ in terms of actual residues as:
\begin{eqnarray}\label{eq:jkoriginal}
  \mathrm{JK}_{\xi}(\phi)=\sum_{F \in \mathcal{FL}^+(\xi)} \nu(F) \, \mbox{Res}_F(\phi).
\end{eqnarray}
\end{remark}

\begin{remark}
Recalling the notion of chamber fan from Definition~\ref{def:chamberfan}, note that every pair of vectors $\xi,\xi' \in \mathbb{R}^r$ contained in the same chamber cone lead to the same JK-residue on the space of functions $\phi$, i.e.~$\mathrm{JK}_{\xi}(\cdot)=\mathrm{JK}_{\xi'}(\cdot)$.
In other words, chambers are exactly the connected components of the set of all generic vectors $\xi \in \mathbb{R}^r$ with respect to the hyperplane arrangement $\{\mathcal{H}_i \}_{i \in [n]}$. 
\end{remark}

\paragraph{Extended JK-residue.}
We extend the definition of JK-residue to differential forms on $\R^r$ with simple poles along configuration of affine hyperplanes in $\R^r$, i.e.~the hyperplanes which do not necessarily pass through the origin. More precisely, consider a top-degree form $\omega$ of degree $r$ in $\R^r$ with simple poles on a generic
affine hyperplane arrangement $\{\mathcal{H}_i\}_{i \in [n]}$. where $\mathcal{H}_{i}=\{x \in \R^r\,|\, \langle \beta_i, x\rangle + a_i =0\}$. Note that $\{\beta_i \}_{i \in[n]}$ is the set of rays of the normal fan of such arrangement.
By the genericity assumption, each collection $\{\mathcal{H}_i\}_{i \in I}$ of hyperplanes, with $I \in \binom{[n]}{r}$, intersects in a point $q_I \in \mathbb{R}^r$.
We fix one such point $q_I$ and consider the leading term of the Laurent expansion of $\omega$ around $q_I$ as:
 \begin{equation}\label{eq:localomega}
     \omega^{(q_I)}:=f_{\bar{I}}(q_I) \, \omega_I, 
 \end{equation}
where $f_{\bar{I}}$ is a function holomorphic in a neighborhood of $q_I$ and $\omega_I$ is a  top-degree form with simple poles on $\{\mathcal{H}_i\}_{i \in I}$.
Moreover, we translate $\omega^{(q_I)}$ to the origin as:   $\omega^{(q_I)}_0:=\omega^{(q_I)}(x-q_I).$ We repeat this procedure for all $\binom{n}{r}$ intersection points $q_I$ of the hyperplanes $\{\mathcal{H}_i\}_{i \in I}$ and define:
\begin{equation*}
\omega_0:=\sum_{I \in \binom{[n]}{r}} \omega^{(q_I)}_0.
\end{equation*}
Moreover, let $\phi$ be a rational function such that 
\begin{equation}\label{eq:w0}
\omega_0=\phi \, \mbox{d} x_1\wedge \cdots \wedge \mbox{d} x_r.
\end{equation}
Note that since we work with a generic hyperplane arrangement, the function $\phi$ is of pure degree $-r$.

\medskip

Now, following the above notation we define the (extended)
JK-residue of the form $\omega$ as follow:

\begin{definition}\label{def:JKRes}
{\rm Let $\omega$ be a top-degree form of degree $r$ in $\mathbb{R}^r$ and $\phi = \frac{\omega_0}{\mbox{d} x_1\wedge \cdots \wedge \mbox{d} x_r}$ as before. Then we define the  \emph{(extended) JK-residue} of $\omega$ at a point $\xi \in \mathbb{R}^r$ as the classical JK-residue of $\phi$:
\begin{equation*}
    \mathrm{JK}_{\xi} \, \omega :=  \mathrm{JK}_{\xi} \, \phi.
\end{equation*}
}\end{definition} 

We now express the extended JK-residue in terms of a residue computation on $\omega$ as follows:

\begin{Proposition}\label{prop:jkresasold}
Let $\mathcal{H}_i$ be an affine hyperplane with the defining equation $D_i(x)= \langle \beta_i, x\rangle + a_i =0$ for $i=1,\ldots,n$. Consider the hyperplane arrangement $\{\mathcal{H}_i\}_{i=1}^n$ and the differential form $\omega$:
\begin{equation*}
    \omega= \frac{{\rm d}x_1\wedge \cdots \wedge {\rm d}x_r}{D_1(x)\cdots D_n(x)}. 
\end{equation*} 
Furthermore, let ${\rm Res}_{C_I} \omega$ denote the residue of $\omega$ at the point $q_I= \bigcap_{i\in I}\mathcal{H}_i$ that is:
\begin{equation*}
    {\rm Res}_{C_I} \, \omega :=\frac{1}{|\det(\{ \beta_i\}_{i \in I})|}\cdot \frac{1}{\prod_{j\notin I} D_j(q_I)}.
\end{equation*}
Then the (extended) JK-residue of $\omega$ at a point $\xi\in\mathbb{R}^r$ can be computed as:\footnote{In \cite{ferro2018amplituhedron,Benini:2013xpa}, the right side of~\eqref{def:jkphysics} was used as a starting definition for the JK-residue of $\omega$.} 
\begin{equation}\label{def:jkphysics}
    \mathrm{JK}_{\xi} \omega = \sum_{I: \xi \in C_I} \, {\rm Res}_{C_I} \, \omega.
\end{equation}
\end{Proposition}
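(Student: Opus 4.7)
\medskip

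\noindent\textbf{Proof proposal.} The strategy is to compute both sides by unwinding Definition~\ref{def:JKRes} explicitly for the given $\omega$, and then comparing term by term with the classical JK-residue of Definition~\ref{def:jkorig}.

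First, I would analyze the Laurent expansion of $\omega$ near each intersection point $q_I = \bigcap_{i \in I} \mathcal{H}_i$, where $I$ ranges over those $r$-subsets of $[n]$ for which $\{\beta_i\}_{i \in I}$ is a basis of $\mathbb{R}^r$ (the genericity of the arrangement ensures that these are exactly the codimension $r$ strata of the pole locus). Near $q_I$ the factors $D_i(x)$ for $i \in I$ vanish simply, while $D_j(q_I) \neq 0$ for $j \notin I$, so the factorization
\begin{equation*}
\omega = \frac{1}{\prod_{j \notin I} D_j(x)} \cdot \frac{{\rm d}x_1 \wedge \cdots \wedge {\rm d}x_r}{\prod_{i \in I} D_i(x)}
\end{equation*}
displays the regular and singular parts. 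Taking the constant term of the Taylor expansion of $1/\prod_{j \notin I} D_j(x)$ at $q_I$ identifies the leading singular piece \eqref{eq:localomega} as $\omega^{(q_I)} = f_{\bar I}(q_I)\, \omega_I$ with $f_{\bar I}(q_I) = 1/\prod_{j \notin I} D_j(q_I)$ and $\omega_I = {\rm d}x_1 \wedge \cdots \wedge {\rm d}x_r / \prod_{i \in I} D_i(x)$.

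Next, I would translate $q_I$ to the origin. Since $D_i(q_I) = 0$ for $i \in I$, one has $a_i = -\langle \beta_i, q_I\rangle$ for $i \in I$, and the translation converts each $D_i(x)$ into its linear part $\langle \beta_i, x\rangle$. Consequently,
\begin{equation*}
\omega^{(q_I)}_0 = \frac{1}{\prod_{j \notin I} D_j(q_I)} \cdot \frac{{\rm d}x_1 \wedge \cdots \wedge {\rm d}x_r}{\prod_{i \in I} \langle \beta_i, x\rangle}.
\end{equation*}
Summing over $I$ and extracting $\phi$ from \eqref{eq:w0} yields
\begin{equation*}
\phi = \sum_{I} \frac{f_I(x)}{\prod_{j \notin I} D_j(q_I)},
\end{equation*}
where $f_I$ is the basic rational function from \eqref{eq:rational_function}.

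Finally, I would invoke linearity of $\mathrm{JK}_\xi$ together with Definition~\ref{def:jkorig} applied to each $f_I$: the term corresponding to $I$ contributes the scalar $1/(|\det(\{\beta_i\}_{i \in I})| \prod_{j \notin I} D_j(q_I))$ whenever $\xi \in C_I$ and vanishes otherwise. This produces exactly $\sum_{I:\xi \in C_I} \mathrm{Res}_{C_I} \omega$, proving the claim. The main subtlety to be careful about is ensuring that no lower-codimension strata of the pole arrangement contribute hidden terms to $\phi$; genericity guarantees that every codimension $r$ singular point of $\omega$ is an honest $r$-fold transversal intersection of hyperplanes, so the Laurent extraction described above captures all the data of $\omega$ that survives the JK-residue (which, being homogeneous of degree $-r$, is insensitive to lower-order Taylor corrections in $f_{\bar I}$).
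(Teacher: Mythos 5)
Your proposal is correct and follows essentially the same route as the paper's proof: extract the leading Laurent term at each $q_I$, translate so that the $D_i$ with $i\in I$ become their linear parts, sum to get $\phi$ as a combination of the basic functions $f_I$ weighted by $1/\prod_{j\notin I}D_j(q_I)$, and apply Definition~\ref{def:jkorig} together with linearity of $\mathrm{JK}_\xi$. The extra remarks you add (why translation kills the constants $a_i$ for $i\in I$, and why genericity rules out hidden contributions since $\mathrm{JK}_\xi$ only sees the degree $-r$ part) are consistent with the paper's setup and do not change the argument.
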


\begin{proof}
Recalling the definitions of $\omega_0^{(q_I)}$ and $\phi$ from \eqref{eq:w0} and \eqref{eq:localomega} we have: 
\begin{eqnarray*}
\omega_0^{(q_I)}=\frac{\bigwedge_{i =1}^{r} \mbox{d} x_i}{\prod_{i \in I} D_i(x)}\cdot \frac{1}{\prod_{j \not \in I } D_j(q_I)}
\quad\text{and}\quad 
\phi = \sum_{I\in \binom{[n]}{r}} \frac{1}{\prod_{i \in I} D_i(x-q_I)}\cdot \frac{1}{\prod_{j \not \in I } D_j(q_I)}.
\end{eqnarray*}
Note that $\omega_0^{(q_I)}$ has poles on the hyperplanes whose normal vectors are the rays of the cone $C_I$. Since the corresponding $r$ hyperplanes intersect in a point $q_I$, $C_I$ is full-dimensional. Hence, by~\eqref{def:jkonbasic}, the function $\omega_I$ contributes in~\eqref{eq:localomega} by ${\left| \mbox{\textup{det}}(\{ \beta_i\}_{i \in I}) \right|}^{-1}$ when $\xi \in C_I$. Thus, by linearity of the functional $\mathrm{JK}_{\xi}(\cdot)$:
\begin{equation}\label{eq:jkproof}
    \mathrm{JK}_{\xi}(\phi)= \sum_{I : \xi \in C_I} \frac{1}{|\det(\{ \beta_i\}_{i \in I})|}\cdot \frac{1}{\prod_{j\notin I} D_j(q_I)}.
\end{equation}
Finally, we note that each term in \eqref{eq:jkproof} is, up to a sign, the multivariate residue of $\omega$ computed at~$q_I$. 
\end{proof}

\vspace{-2mm}
\subsection{Canonical functions from JK-residue}\label{subsec:canjk}
Here we apply the JK-residue to answer Question~\ref{ques:res} for amplituhedra which are either polytopes or conjugate to polytopes. More precisely, we show how the residue computation naturally ties in with the geometric framework introduced in \S\ref{sec:polytopes} and \S\ref{sec:linear}.

\medskip

Using the JK-residue, we first compute the volume function of the $k=1$ amplituhedra. The statement was already made in \cite{ferro2018amplituhedron}. Here, we present an explicit proof using our terminology.

\begin{Proposition} \label{prop:jkpolytopes}
Let $\xi$ be a generic vector in the secondary fan of the cyclic polytope $\A_{n,1,m}$. Then the canonical function $\Omega(\A_{n,1,m})$ can be obtained from the JK-residue of the fiber volume form $\omega_{n,1,m}$ as: 
\begin{equation*}
   \Omega(\A_{n,1,m})(Y)= 
   \mathrm{JK}_{\xi} \, \omega_{n,1,m}(Y).
\end{equation*}
Moreover, if $\xi$ is in a given cone $\sigma$ of the secondary fan of $\A_{n,1,m}$, then
\begin{equation*}
   \Omega(\A_{n,1,m})= \sum_{I: \sigma \subseteq C_I} \Omega(\Delta_{\bar{I}}),
\end{equation*}
where $\Delta_{\bar{I}}$ is the simplex whose vertices are the columns $Z_i$ of the matrix $Z$ defining $\A_{n,1,m}$ for ${i\in\bar{I}}$.
\end{Proposition}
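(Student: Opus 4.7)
The plan is to combine three previously-established ingredients: the explicit form of $\omega_{n,1,m}$ from Corollary~\ref{cor:volumefibers}, the residue formula for the canonical function from Theorem~\ref{thm:resfiber}, and the Gale-duality description of regular triangulations from Theorem~\ref{thgaledual}.

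First I would expand the left-hand side using Proposition~\ref{prop:jkresasold}. By \eqref{eq:volumefiberspolconjpol}, $\omega_{n,1,m}$ is a rational top-form on $\R^\ell$ (with $\ell=n-m-1$) with simple poles along the affine hyperplane arrangement $\mathcal{H}_i=\{\lambda\cdot A^i=0\}$ for $i\in[n]$. Since this is exactly the setup of Definition~\ref{def:JKRes} and Proposition~\ref{prop:jkresasold}, we obtain
\[
\mathrm{JK}_{\xi}\,\omega_{n,1,m}\ =\ \sum_{I\,:\,\xi\in C_I}\mathrm{Res}_{q_I}\,\omega_{n,1,m},
\]
where $I$ ranges over $\ell$-subsets of $[n]$, $C_I=\mathrm{cone}(\{A^i\}_{i\in I})$, and $q_I=\bigcap_{i\in I}\mathcal{H}_i$ is the isolated intersection inside the fiber $\pi^{-1}(Y)$.

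Next I would identify each summand with a canonical function of a simplex. For an $\ell$-subset $I$, the point $q_I$ is the pole of $\omega_{n,1,m}$ associated (via Theorem~\ref{thm:resfiber}(i)) to the positroid cell $S_M\subset\Gr_+(1,n)$ cut out by $\{p_i=0:i\in I\}$; its closure $\overline{S_M}$ maps under $\pi_+$ onto the simplex $\Delta_{\bar I}$ with vertex set $\{Z_j\}_{j\in\bar I}$. Theorem~\ref{thm:resfiber}(ii) then gives
\[
\mathrm{Res}_{q_I}\,\omega_{n,1,m}(Y)\ =\ \Omega(\Delta_{\bar I})(Y).
\]
Finally, I would invoke Gale duality. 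By Example~\ref{ex:polytope2}, after fixing the affine chart, the rays of the secondary fan of the cyclic polytope $\A_{n,1,m}$ are exactly the vectors $A^i$, so the chamber fan of $\{A^i\}_{i\in[n]}$ coincides with the secondary fan of $\A_{n,1,m}$. Theorem~\ref{thgaledual} then identifies the condition $\sigma\subseteq C_I$ with $\Delta_{\bar I}$ being a simplex of the regular triangulation of $\A_{n,1,m}$ associated to $\sigma$. For generic $\xi$ in the interior of a maximal cone $\sigma$, the conditions $\xi\in C_I$ and $\sigma\subseteq C_I$ coincide, and summing over them yields precisely the simplices of that triangulation. Since $\A_{n,1,m}$ is a genuine polytope, Conjecture~\ref{conj:canformfromtriang} holds and summing $\Omega(\Delta_{\bar I})$ over any triangulation returns $\Omega(\A_{n,1,m})$, proving both claims simultaneously.

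The only nontrivial point I expect is bookkeeping of signs: Proposition~\ref{prop:jkresasold} uses $|\det(\{\beta_i\}_{i\in I})|^{-1}$, whereas $\Omega(\Delta_{\bar I})$ carries the sign fixed by the global orientation on $\Gr(1,m+1)$ of Remark~\ref{rk:orientation2}. I would resolve this by fixing one orientation upfront, verifying the identification $\mathrm{Res}_{q_I}\omega_{n,1,m}=\Omega(\Delta_{\bar I})$ with the correct sign on a single cone via direct computation from \eqref{eq:volumefiberspolconjpol} (in the spirit of Proposition~\ref{prop:detiscan}), and extending to all cones using the constant-sign argument already used in Lemma~\ref{lem:combtype}. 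No new geometric input is needed beyond the results already cited.
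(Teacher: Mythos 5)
Your proposal is correct and takes essentially the same route as the paper's proof: the explicit chart expression from Corollary~\ref{cor:volumefibers}, Proposition~\ref{prop:jkresasold} to expand $\mathrm{JK}_{\xi}$ as a sum of residues, Theorem~\ref{thm:resfiber} to identify each residue with $\Omega(\Delta_{\bar{I}})$, Gale duality (Theorem~\ref{thgaledual} together with Example~\ref{ex:polytope2}) to match $\{I:\xi\in C_I\}$ with the regular triangulation attached to the chamber of $\xi$, and the triangulation formula (valid since $\A_{n,1,m}$ is a polytope). Only two minor points differ: the cone generators in the affine chart are the Gale-dual columns $Z_i^\perp$ (the truncations of the $A^i$ to the chart coordinates) rather than the full vectors $A^i$, and the paper settles the sign issue directly by observing that $q_I\cdot A^i\geq 0$ for all $i$ whenever $Y\in\Delta_{\bar{I}}$ (so each residue is positive, matching Remark~\ref{rk:orientation2}), rather than via the constant-sign argument of Lemma~\ref{lem:combtype}.
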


\begin{proof}
As in \S\ref{sec:fibers}, without loss of generality, we work on the chart $\lambda=(\lambda_1,\ldots, \lambda_{n-m-1},1)$. Then:
\begin{equation*} 
     \omega_{n,1,m}=\frac{ \mbox{d} \lambda_1 \wedge \cdots \wedge \mbox{d} \lambda_{n-m-1}}{\prod_{i=1}^n \left( \lambda \cdot A^i \right)},
\end{equation*}
by Corollary~\ref{cor:volumefibers}. Now, let $q_I$ be a pole of $\omega_{n,1,m}$ given in Theorem~\ref{thm:resfiber}(i) as the intersection point of the fiber $\pi^{-1}(Y)$ with some generalized triangle. 
Then by Proposition~\ref{prop:jkresasold} we have:
\begin{equation}\label{form1}
\mathrm{JK}_{\xi}\, \omega_{n,1,m}=  \sum_{I: \,  \xi \in C_I } {\mathrm{Res}}_{C_I} \, \omega_{n,1,m},
\end{equation}
where $C_I$ are simplicial cones whose rays are $\lbrace Z^\perp_i\rbrace_{i \in I}$ for $I \in \binom{[n]}{n-m-1}$. As explained in Example~\ref{ex:polytope2}, the Gale dual of $Z$ is indeed the normal fan of the affine hyperplane arrangement $\{\mathcal{H}_i\}_{i \in [n]}$ in this chart, where $\mathcal{H}_i=\{\lambda \cdot A^i=0\}$.
Moreover, since $\mathcal{A}_{n,1,m}$ is a polytope, Theorem~\ref{thgaledual} implies that $\lbrace C_I: \xi \in C_I \rbrace$ corresponds to a triangulation $\lbrace\Delta_{\bar{I}}\rbrace$ of $\mathcal{A}_{n,1,m}$, where $\Delta_{\bar{I}}$ are simplices with vertices $\lbrace Z_i\rbrace_{i \in \bar{I}}$. Hence, by Theorem~\ref{thm:resfiber}(ii), we have that
\begin{equation}\label{eq:sign}
  {\mathrm{Res}}_{C_I} (\omega_{n,1,m})= \Omega (\Delta_{\bar{I}}).
\end{equation}
Let $q_I= \bigcap_{i\in I}\mathcal{H}_i$. Note that both sides of \eqref{eq:sign} have the same sign, because $q_I\in\Delta_{\bar{I}}$ for any $Y\in \Delta_{\bar{I}}$ and so $q_I \cdot A^i\geq 0$ for all $i$. Now, \eqref{eq:sign} together with \eqref{eq:AHformTriang} implies that \eqref{form1} evaluates to $\Omega(\mathcal{A}_{n,1,m})$. Finally, choosing $\xi$ from another chamber of the normal fan of $\{\mathcal{H}_i\}_{i \in [n]}$ corresponds to choosing another chamber in the secondary fan $\{Z_i^\perp \}_{i \in [n]}$ which leads to another regular triangulation of $\mathcal{A}_{n,1,m}$. Therefore, following the same argument as above, we are able to express the canonical function of $\mathcal{A}_{n,1,m}$ as the sum over the canonical functions of simplices in the triangulation. This concludes the proof. 
\end{proof}

\begin{remark}
We observe that the proof of Proposition~\ref{prop:jkpolytopes} can be easily extended beyond cyclic polytopes, since Gale duality can be applied to any configuration of points; See \S\ref{sec:galeduality}. In particular, with similar arguments, one can use the JK-residue to obtain the canonical function of any convex polytope.
\end{remark}

\begin{example}[Pentagon] 
\label{ex:pentagon}
Consider the (projective) pentagon $\mathcal{A}_{5,2,1}$.
As in \eqref{matrixA}, we choose $A$ to be:
\begin{equation*}
   A= \left(\begin{array}{c|c@{}}
\begin{matrix}  -\frac{\langle 234 \rangle}{\langle 123 \rangle} & \frac{\langle 134 \rangle}{\langle 123 \rangle}& -\frac{\langle 124 \rangle}{\langle 123 \rangle}\\ 
  -\frac{\langle 235 \rangle}{\langle 123 \rangle} & \frac{\langle 135 \rangle}{\langle 123 \rangle}& -\frac{\langle 125 \rangle}{\langle 123 \rangle}   
  \\ \end{matrix}    & \mathbb{I}_{2 \times 2}\\
  \hline
\begin{matrix}
  \frac{\langle Y 23 \rangle}{\langle 123 \rangle}& -\frac{\langle Y 13 \rangle}{\langle 123 \rangle}& \frac{\langle Y 12 \rangle}{\langle 123 \rangle}\end{matrix} & \mathbb{O}_{1 \times 2}
\end{array}\right)
\end{equation*}
We parametrize $\pi_+^{-1}(Y)$ using local
coordinates $\lambda=(\lambda_1,\lambda_2,1) \in \mathbb{P}^2$. Then
$    \omega_{5,1,2}=\frac{ \mbox{d} \lambda_1 \wedge \mbox{d} \lambda_{2}}{\prod_{i=1}^5 \left( \lambda \cdot A^i \right)},$
where $A^i$ is the $i^{\rm th}$ column of $A$. With our choice of coordinates, the cones $\mathfrak{C}_{k_1 k_2}$ are spanned by positive linear combinations of $\lbrace Z^\perp_{k_1},Z^\perp_{k_2}\rbrace$, depicted in $\mathbb{R}^2$ in Figure~\ref{fig:coneschambers}. We now fix a vector $\xi \in \mathbb{P}^2$ as in Figure~\ref{fig:coneschambers} such that $\xi$ is in the chamber $\mathfrak{c}_1$. Then by Proposition~\ref{prop:jkresasold} the JK-residue is computed as:
\begin{equation*}
  \mathrm{JK}_{\xi} \, \omega_{5,1,2} = \sum_{\mathfrak{C}_I \ni \xi} \mbox{Res}_{\mathfrak{C}_I} \omega_{5,1,2}= \left(\mbox{Res}_{\mathfrak{C}_{25}} +\mbox{Res}_{\mathfrak{C}_{45}} +\mbox{Res}_{\mathfrak{C}_{23}} \right) \omega_{5,1,2},
\end{equation*}
since $\xi$ is contained in the cones $\mathfrak{C}_{25},\mathfrak{C}_{45},\mathfrak{C}_{23}$.
\begin{figure}[ht] 
\centering{
\def\svgwidth{0.6\linewidth}{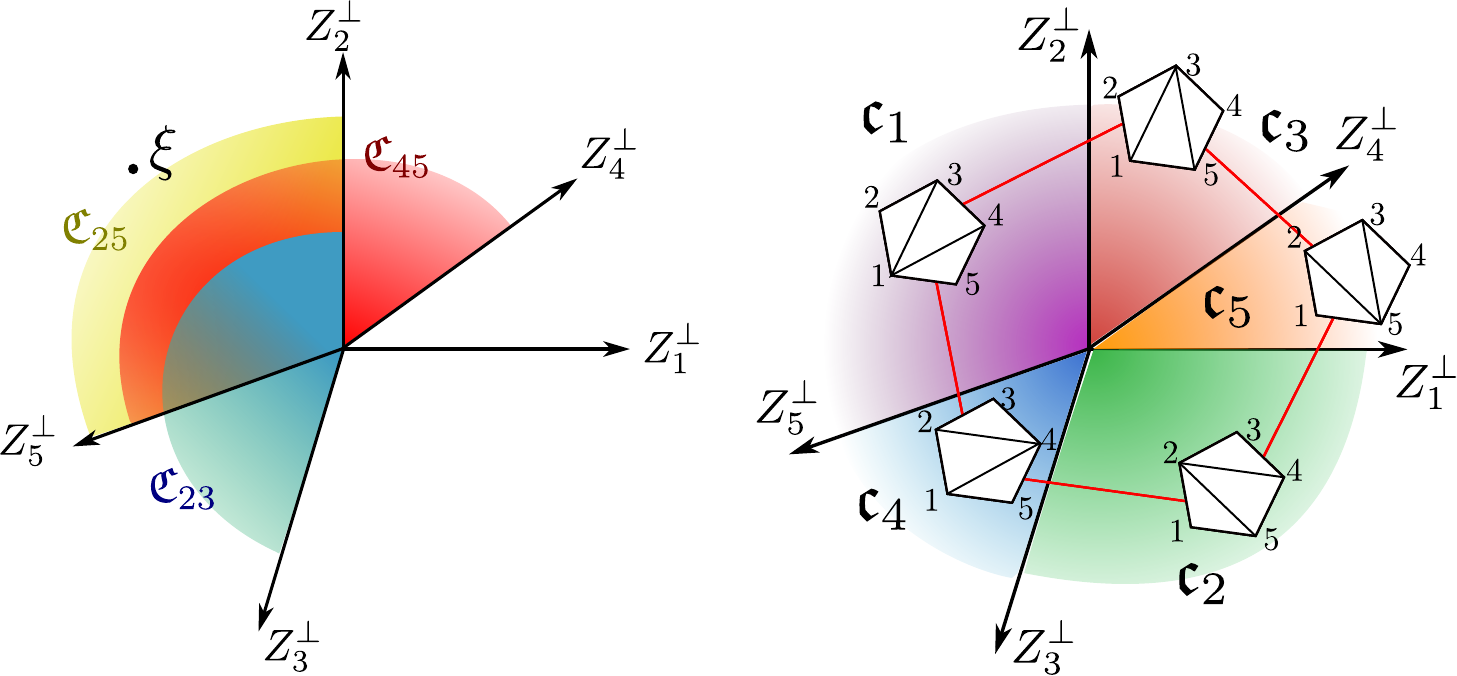}
}
\caption{Illustration of cones and chambers for $\A_{5,2,1}$}
\label{fig:coneschambers}
\end{figure}
This leads to the following representation of $\Omega(\mathcal{A}_{5,2,1})$:
\begin{equation*}
  \mathrm{JK}_{\xi} \, \omega_{5,1,2} = \Omega(\Delta_{134})+  \Omega(\Delta_{123})+\Omega(\Delta_{145})=\Omega(\mathcal{A}_{5,2,1}),
\end{equation*}
where $\Delta_{k_1 k_2 k_3}$ are triangles in $\mathbb{P}^2$ with the vertices $k_1,k_2$ and $k_3$. Clearly, this corresponds to the triangulation of the pentagon into $\lbrace \Delta_{134},\Delta_{123},\Delta_{145}\rbrace$. All the other four triangulations of $\mathcal{A}_{5,2,1}$ can be analogously obtained by choosing the reference vector $\xi$ in different chambers.
\end{example}

We are now in a position to prove an analogous statement of Proposition~\ref{prop:jkpolytopes} for linear fibers, in particular for the case of conjugate to polytopes for even $m$. Following our notation $\mathcal{C}_{\sigma}$ from Definition~\ref{def:triangamp}, and $S_{\tilde{I}}$ and $\Delta_{I}=\overline{\pi_+(S_{\tilde{I}})}$ from Definition~\ref{def:triangulations} and Lemma~\ref{lem:gentriangles}, we prove the following theorem.

\begin{Theorem}\label{th:jkfibers}
Let $\mathcal{A}=\mathcal{A}_{n,\ell,m}$ be an amplituhedron conjugate to a polytopes for $\ell=n-m-1$. Then the canonical function $\Omega(\mathcal{A})$ can be obtained from the JK-residue of its fiber volume form $\omega_{n,\ell,m}$  as: 
\begin{equation*}
   \Omega(\mathcal{A})(Y)= 
   \mathrm{JK}_{\xi} \, \omega_{n,\ell,m}(Y)\quad\text{for any}\quad Y\in\A,
\end{equation*}
where $\xi$ is a generic point in the secondary fan $\mathcal{F}_{\mathcal{A}}(Y)$. Moreover, if $\xi$ is in a cone $\sigma$ of $\mathcal{F}_{\mathcal{A}}(Y)$, then
\begin{equation*}
   \Omega(\mathcal{A})= \sum_{S_{\tilde{I}} \in \mathcal{C}_{\sigma}} \Omega(\Delta_{I}).
\end{equation*}
\end{Theorem}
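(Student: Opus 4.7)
The strategy is to combine three structural results already established in the paper: the explicit presentation of the fiber volume form as a logarithmic form on an affine hyperplane arrangement (Corollary~\ref{cor:volumefibers}), the residue description of canonical functions of generalized triangles (Theorem~\ref{thm:resfiber}), and the identification of the chamber fan $\mathcal{F}_\mathcal{A}$ with the secondary fan parametrizing positroidal triangulations of $\mathcal{A}$ (Theorem~\ref{triangamp}). The Jeffrey--Kirwan apparatus of Proposition~\ref{prop:jkresasold} then glues these together.

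First, I would pass to the dual parametrization of Remark~\ref{rk:formfiberdual}, so that the fiber becomes $\Gr(1,n-m)\simeq \mathbb{P}^{n-m-1}$, and choose the affine chart of Corollary~\ref{cor:volumefibers} in which
\[
\omega_{n,\ell,m}(Y)=\frac{d\bar{\lambda}_1\wedge\cdots\wedge d\bar{\lambda}_\ell}{\prod_{i=1}^n \bar{\lambda}\cdot (A^{I_i})^\perp}.
\]
This presents $\omega_{n,\ell,m}(Y)$ as a rational top-degree form on $\mathbb{R}^\ell$ with simple poles exactly on the affine hyperplanes $\mathcal{H}_i$, whose outward normal rays are the vectors $W^{I_i}$ generating $\mathcal{F}_\mathcal{A}(Y)$ (Lemma~\ref{lem:Wi}). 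Hence $\omega_{n,\ell,m}(Y)$ lies in the scope of the extended Jeffrey--Kirwan residue (Definition~\ref{def:JKRes}), and Proposition~\ref{prop:jkresasold} gives
\[
\mathrm{JK}_\xi\,\omega_{n,\ell,m}(Y) = \sum_{J\in\binom{[n]}{\ell},\;\xi\in C_J}\, \mathrm{Res}_{q_J}\,\omega_{n,\ell,m}(Y),
\]
where $C_J$ is the simplicial cone generated by $\{W^{I_j}\}_{j\in J}$ and $q_J=\bigcap_{j\in J}\mathcal{H}_j$ is the unique intersection of the corresponding hyperplanes.

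Second, I would identify each local residue with the canonical function of a generalized triangle. By Lemma~\ref{lem:gentriangles}, each $\ell$-subset $J$ indexes a unique generalized triangle $\Delta_J=\overline{\pi_+(S_{\tilde J})}$; in our parametrization the point $q_J$ is precisely $\pi^{-1}(Y)\cap\overline{S_{\tilde J}}$, i.e.\ the pole singled out in Theorem~\ref{thm:resfiber}(i). Thus Theorem~\ref{thm:resfiber}(ii) yields
\[
\mathrm{Res}_{q_J}\,\omega_{n,\ell,m}(Y)=\Omega(\Delta_J)(Y),
\]
with the orientation fixed as in Remark~\ref{rk:orientation2}. The positivity asserted in Conjecture~\ref{con:main} guarantees that the sign produced by the absolute-value determinant in the JK-prescription agrees uniformly on $\mathcal{A}$ with this intrinsic orientation.

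Third, for a generic $\xi$ lying in the interior of a maximal cone $\sigma$ of $\mathcal{F}_\mathcal{A}(Y)$, Theorem~\ref{triangamp} identifies the index set $\{J\in\binom{[n]}{\ell}:\xi\in C_J\}$ with $\{J: S_{\tilde J}\in\mathcal{C}_\sigma\}$ and declares $\mathcal{C}_\sigma$ to be a positroidal triangulation of $\mathcal{A}$. Summing the local residues and invoking Conjecture~\ref{conj:canformfromtriang} (known to hold for amplituhedra conjugate to polytopes, where Conjecture~\ref{conjtriangles} is valid) gives
\[
\mathrm{JK}_\xi\,\omega_{n,\ell,m}(Y)=\sum_{S_{\tilde J}\in\mathcal{C}_\sigma}\Omega(\Delta_J)(Y)=\Omega(\mathcal{A})(Y),
\]
which establishes both asserted equalities at once. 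The main technical obstacle is the sign bookkeeping in the second step: the JK-prescription produces $1/|\det(W^{I_{j_1}},\ldots,W^{I_{j_\ell}})|$, whereas $\Omega(\Delta_J)$ carries a definite sign relative to $\mu_{\Gr(\ell,\ell+m)}$. It is precisely Conjecture~\ref{con:main} that fixes the sign of each such determinant globally over $\mathcal{A}$ and aligns the two conventions, so that the theorem becomes unconditional in every case where this positivity is already known.
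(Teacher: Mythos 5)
Your proposal is correct and follows essentially the same route as the paper's proof: the affine chart of Corollary~\ref{cor:volumefibers}, the identification of the pole hyperplanes' normals with the rays $W^{I_i}$ via Lemma~\ref{lem:Wi}, Proposition~\ref{prop:jkresasold} for the JK-residue, Theorem~\ref{thm:resfiber}(ii) to convert each local residue into $\Omega(\Delta_I)$, and Theorem~\ref{triangamp} together with \eqref{eq:AHformTriang} to assemble the triangulation sum. The only cosmetic difference is the sign bookkeeping: the paper fixes the sign by noting that for $Y\in\Delta_I$ the point $q_I$ lies in the positive fiber, so all $D_j(q_I)\geq 0$ and both $\mathrm{Res}_{C_I}\omega_{n,\ell,m}$ and $\Omega(\Delta_I)$ are positive under the convention of Remark~\ref{rk:orientation2}, rather than attributing the alignment directly to Conjecture~\ref{con:main} (which the whole section assumes anyway).
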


\begin{proof}
Without loss of generality, we work on the chart $\bar{\lambda}=(1,\bar{\lambda}_2,\ldots, \bar{\lambda}_{n-m})$ compatible with \eqref{matrixA}. Then, by Corollary~\ref{cor:volumefibers}, we have that:
\begin{equation*} 
    \omega_{n,\ell,m}=\frac{\mbox{d} \bar{\lambda}_2 \wedge \cdots \wedge \mbox{d} \bar{\lambda}_{n-m}}{\prod_{i=1}^n  \bar{\lambda} \cdot {(A^{I_i})}^\perp}.
\end{equation*}
Consider a hyperplane arrangement $\{\mathcal{H}_i\}$ with defining equations $D_i(\bar{\lambda})=\bar{\lambda} \cdot {(A^{I_i})}^\perp=0$ for $i=1,\ldots, n$. By Lemma~\ref{lem:Wi} the rays $\{W^{I_i} \}$ of the secondary fan $\mathcal{F}_{\mathcal{A}}(Y)$ are the normal rays of these hyperplanes. For an $\ell$-subset $I$, let $C_I$ be the simplicial cone generated by rays $\{W^{I_i} \}_{i\in I}$. By Theorem~\ref{thm:resfiber}(ii), we have:
\begin{equation}\label{eq:ResCI}
 \pm {\mathrm{Res}}_{C_I} \omega_{n,\ell,m}=  {\mathrm{Res}}_{q_I} \omega_{n,\ell,m} = \Omega (\Delta_{I}),
\end{equation} 
where $q_I= \bigcap_{i\in I}\mathcal{H}_i$. Note that for any $Y\in \Delta_{I}$, the point $q_I$ belongs to the positive fiber $\pi_+^{-1}(Y)$. Hence we have that $D_i(q_I)\geq 0$ for any $i$, as $D_i\geq 0$ are defining inequalities of $\pi_+^{-1}(Y)$.
In particular,
\[
{\mathrm{Res}}_{C_I} (\omega_{n,\ell,m})(Y)>0 \text{ for any } Y\in \Delta_{I}.
\]
On the other hand, by the sign convention from Remark~\ref{rk:orientation2} we have ${\mathrm{Res}}_{q_I} \omega_{n,\ell,m}(Y)= \Omega (\Delta_{I})(Y)>0.$
Furthermore, by Proposition~\ref{prop:jkresasold} we have that:
\begin{equation}\label{form1conj}
\mathrm{JK}_{\xi}\, \omega_{n,\ell,m}=  \sum_{I: \,  \xi \in C_I } {\mathrm{Res}}_{C_I} \, \omega_{n,\ell,m}(Y).
\end{equation}
Moreover, by the proof of Theorem~\ref{triangamp}, $\{\Delta_{I}:\xi \in C_I \}$ is a positroidal triangulation of $\mathcal{A}$. So \eqref{eq:AHformTriang} implies that \eqref{form1conj} evaluates to $\Omega(\mathcal{A})$ which completes the proof.
\end{proof}

\begin{example}[Conjugate to Pentagon] \label{ex:conjpentaton}
For the amplituhedron $\mathcal{A}_{5,2,2}$, as in \eqref{matrixA}, we choose $A$ to be:
\begin{equation*}
   A= \left(\begin{array}{c|c@{}}
\begin{matrix} -z_1 & -z_2 & -z_3 & -z_4 \end{matrix}   & 1\\
  \hline
Y_{2 \times 4} & \mathbb{O}_{2 \times 1}
\end{array}\right).
\end{equation*}
We parametrize $\pi_+^{-1}(Y)$ using local inhomogeneous coordinates $\bar{\lambda}=(1,\bar{\lambda}_2,\bar{\lambda}_3) \in \mathbb{P}^2$. Then:
\begin{equation*} 
     \omega_{5,2,2}=\frac{ \mbox{d} \bar{\lambda}_2 \wedge \mbox{d} \bar{\lambda}_{3}}{\prod_{i=1}^5 \left( \bar{\lambda} \cdot {(A^{I_i})}^\perp \right)}.
\end{equation*}
Note that each ${(A^{I_i})}^\perp$ is a $1$-dim space in  $\mathbb{R}^3$. In particular,
\begin{eqnarray*}
{(A^{I_i})}^\perp&=&(p_{i,i+1}(Y),z_i Y_2^{i+1}-z_{i+1}Y_2^i,-z_i Y_1^{i+1}+z_{i+1}Y_1^i), \quad\text{for}\quad i \in \{1,2,3\}\\
{(A^{I_4})}^\perp&=&(0,Y^4_2,-Y^4_1), \quad {(A^{I_5})}^\perp=(0,Y^1_2,-Y^1_1).
\end{eqnarray*}
Then, the normal vectors $\{W^{I_i}\}_{i \in [5]}$ of the affine hyperplanes are:
\begin{eqnarray*}
W^{I_i}&=&-(z_i Y_2^{i+1}-z_{i+1}Y_2^i,-z_i Y_1^{i+1}+z_{i+1}Y_1^i), \quad\text{for}\quad i \in \{1,2,3\}\\
W^{I_4}&=&-(Y^4_2,-Y^4_1), \quad  W^{I_5}=-(Y^1_2,-Y^1_1). 
\end{eqnarray*}
For example, we have:
\begin{equation*}
    \det\binom{W^{12}}{W^{23}} = z_1z_2p_{23}(Y)+z_2^2p_{31}(Y)+z_3z_2p_{12}(Y) =- \langle Y (345) \cap (145) \rangle.
\end{equation*}

By choosing $\bar{\lambda}=(1,\bar{\lambda}_2,\bar{\lambda}_3)$ as 
coordinates, the cones $\mathfrak{C}_{k_1 k_2}$ are spanned by positive linear combinations of $\lbrace W^{I_{k_1}}, W^{I_{k_2}}\rbrace$. The rays form a configuration depicted in Figure~\ref{fig:coneschambers}. The JK-residue computation is analogous to the one in Example~\ref{ex:pentagon}.
More specifically, if we fix $\xi$ in the cones $\mathfrak{C}_{25},\mathfrak{C}_{45},\mathfrak{C}_{23}$, we have:
\begin{equation*}
  \mathrm{JK}_{\xi} \, \omega_{5,2,2} = \sum_{\mathfrak{C}_I \ni \xi} \mbox{Res}_{\mathfrak{C}_I} \omega_{5,2,2}= \left(\mbox{Res}_{\mathfrak{C}_{25}} +\mbox{Res}_{\mathfrak{C}_{45}} +\mbox{Res}_{\mathfrak{C}_{23}} \right) \omega_{5,2,2}.
\end{equation*}
This leads to the following representation of $\Omega(\mathcal{A}_{5,2,2})$:
\begin{equation*}
  \mathrm{JK}_{\xi} \, \omega_{5,2,2} = \Omega(\Delta_{25})+  \Omega(\Delta_{45})+\Omega(\Delta_{23})=\Omega(\mathcal{A}_{5,2,2}), \quad \Omega(\Delta_{j_1 j_2})(Y)= \frac{\langle Y \bar{I}_{j_1} \cap \bar{I}_{j_2}\rangle^2}{\displaystyle \prod_{i \ne j_1,j_2} \det \left( {(A^i)}^\perp,{(A^{I_{j_1}})}^\perp,{(A^{I_{j_2}})}^\perp \right)},
\end{equation*}
where $\Delta_{j_1 j_2}$ are generalized triangles of $\mathcal{A}_{5,2,2}$ in Lemma~\ref{lem:gentriangles}. This corresponds to the triangulation of $\mathcal{A}_{5,2,2}$ which is parity dual to the triangulation of the pentagon $\mathcal{A}_{5,2,1}$ into $\lbrace \Delta_{134},\Delta_{123},\Delta_{145}\rbrace$. All other triangulations of $\mathcal{A}_{5,2,2}$ can be analogously obtained by choosing the vector $\xi$ in different chambers.
\end{example}

\vspace{-4mm}

\section{Fiber positive geometries}\label{sec:fiberposgeom}
In this section, motivated by the discussions about polytopes and amplituhedra, we propose a generalization of the fiber-based approach for studying triangulations and canonical forms of positive geometries. With a more speculative tone, we introduce the general framework of \emph{fiber positive geometries}. 

\subsection{General framework} \label{subsec:genframe}
Let $(X,X_+)$ and $(Y,Y_+)$ be two positive geometries such that $\mbox{dim}(X) \geq \mbox{dim}(Y)$ and let $\pi: X \rightarrow Y$ and $\pi_+: X_+ \rightarrow Y_+$ be projections such that the fibers $(\pi^{-1}(y), \pi_+^{-1}(y))$ are also positive geometries for all $y\in Y_+$.
Analogous to Definition~\ref{def:triangulations}, we define \emph{triangulations} of $Y_+$ induced by the map $\pi_+$ as follows:
\begin{definition}
{\rm A finite collection $\mathcal{C} = \{S\}$ 
of strata of $X_+$ is 
a $\pi_+$\emph{-dissection} of $Y_+$ if we have that:
\begin{itemize}
	\item $\dim \, \overline{\pi_+(S)}  = \dim(Y)$ for each $S$ in $\mathcal{C}$.
	\item The images $\pi_+(S)$ and $\pi_+(S')$ 
		of two distinct elements in the collection are disjoint.
	\item
	$\bigcup\overline{\pi_+(S)}= Y_+$, i.e.~the union of the images of the elements in $\mathcal{C}$ is dense in $Y_+$.
	\end{itemize}
 A $\pi_+$\emph{-dissection} $\mathcal{C} = \{S\}$
	of $Y_+$ is a $\pi_+$\emph{-triangulation}
	of $Y_+$ if $\pi_+$ is injective on each $S$ in $\mathcal{C}$. In this case, each $S$ is called a  $\pi_+$\emph{-generalized triangle} of $Y_+$.
}\end{definition}
 With a slight abuse of terminology, we may also call $\Delta_S:=\overline{\pi_+(S)}$ a $\pi_+$\emph{-generalized triangle} and $ \{\Delta_S\}$ a $\pi_+$\emph{-triangulation}.
Then, as in Conjecture~\ref{conj:canformfromtriang}, we would like to have a compatibility between triangulations and canonical forms of positive geometries, as follows:
\begin{property}
Let $\mathcal{C}=\lbrace S\rbrace$ be a $\pi_+$-triangulation of $Y_+$. Then its canonical form can be computed as: 
\begin{equation*}
 \mathbf{\Omega}\left(Y_+ \right)=\sum_{S\in\mathcal{C}}   \mathbf{\Omega}\left(\pi_+(S) \right).
\end{equation*}
In particular, the sum  does not depend on the $\pi_+$-triangulation $\mathcal{C}$.
\end{property}
Let $\mu_Y$ be a standard (covariant) volume form on $Y.$ The canonical form of $Y_+$ can be written as:
\begin{equation*}
   \mathbf{\Omega}\left(Y_+ \right)=\Omega(Y_+) \, \mu_Y,
\end{equation*}
where we call $\Omega(Y_+)$ the \emph{canonical function} of $Y_+$. Moreover, analogous to Definition~\ref{def:omegafiber}, we define:
\begin{definition}
{\rm Let $\omega$ be a form on $X$ of degree $\dim(X)-\dim(Y)$ with
   $\mathbf{\Omega}(X_+)=\omega \wedge \pi^*(\mu_Y).$
Given $y \in Y_+$, we let $\omega_\pi(y)$ be the \emph{volume form on the fiber} $\pi^{-1}(y)$ of $Y$ by restricting $\omega$ to the fiber:
\begin{equation*}
    \omega_\pi(y)=\omega|_{\pi^{-1}(y)}.
\end{equation*}
}\end{definition}

As Theorem~\ref{thm:resfiber}, we would like to relate $\omega$ to the triangulations of the positive geometry, as follows: 
\begin{property} \label{thm:resfiberGEN}
Let $y \in Y_+$ and let $\omega_\pi(y)$ be the volume form on the fiber $\pi^{-1}(y)$. Then:

\begin{itemize}
    \item[{\rm(i)}] Every $\pi_+$-generalized triangle $S$ of $Y_+$ corresponds to a pole $q_S$ of $\omega_\pi(y)$, where $q_S=\pi^{-1}(y) \cap \overline{S}$.
    
    \item[{\rm (ii)}] The residue\footnote{the residues have to be performed with suited signs, as explained in Remark~\ref{rk:orientation2}.} of 
    $\omega_{\pi}(y)$ at the pole $q_S$ computes the canonical function of the generalized triangle:  
    \begin{equation*}
\Omega(\Delta_S)(y) = {\rm Res}_{q_S} \omega_{\pi}(y).
    \end{equation*}
\item[{\rm (iii)}]    In particular, given a $\pi_+$-triangulation $\mathcal{C}=\lbrace S \rbrace$ of $Y_+$, its canonical form is given by:
\begin{equation*}
    {\mathbf \Omega}(Y_+)(y) = \mu_Y \cdot \sum_{S\in \mathcal{C}} {\rm Res}_{q_S} \omega_{\pi}(y).
\end{equation*}
\end{itemize}
\end{property}

We envision that the above properties
could provide useful tools to study Question~\ref{quest:general} leading to the notion of \emph{fiber positive geometry}. Moreover, the algebraic-analytical counterpart of this problem would be finding a suited residue procedure on the fiber volume form $\omega_{\pi}$ which computes the canonical form ${\mathbf\Omega}(Y_+)$ and reflects the combinatorics of $\pi_+$-triangulations (dissections) of $Y_+$.

\subsection{Fiber polytopes and residues \emph{à la} Jeffrey-Kirwan}\label{subsec:cansubdiv}
We now study Question~\ref{quest:general}.
In particular, we show how a generalization of the $\mathrm{JK}$-residue procedure can be used in the context of fiber polytopes. 
More precisely, let $P\subset \mathbb{P}^m$ be a convex polytope of dimension $m$ and let ${\bf \Omega}(P)$ be its canonical form. To compute ${\bf \Omega}(P)$, one can consider a (standard) triangulation $\{\Delta_i\}$ of $P$. Then
$
{\bf \Omega}(P) = \sum_i {\bf \Omega}(\Delta_i).
$
As shown in \S\ref{sec:JK}, the above procedure can be also viewed as a computation of the $\mathrm{JK}$-residue of the fiber volume form of the projection of the standard simplex to $P$.

More generally, assume that $P$ comes as a sufficiently general linear projection $\pi_+:Q\to P$ of a polytope $Q$ with $\mbox{dim}(Q)=n \geq m$. For simplicity, we assume that $Q$ is simple in dimension $m=\dim (P)$, i.e.~each $m$-dim face of $Q$ belongs to exactly $n-m$ facets. 
Now, let $\{S_i\}$ be a collection of $m$-dim faces of $Q$ which induces a $\pi_+$-regular subdivision of $P$. Therefore, we have that:
\begin{equation}\label{eq:omega1}
{\bf \Omega}(P) = \sum_i {\bf \Omega}(\pi_+(S_i)).
\end{equation}
Moreover, the framework established in \S\ref{sec:polytopes} guarantees that \eqref{eq:omega1} can be viewed as a computation of residue \emph{à la} Jeffrey-Kirwan of the fiber volume form. We describe this in more detail as follows. 
As before, we define the fiber volume form $\omega_\pi(y)$ to be a form on $Q\subset \mathbb{P}^n$ such that:
\begin{equation*}
    \mathbf{\Omega}(Q)=\omega \wedge \pi^*(\mu_{\mathbb{P}^m}).
\end{equation*}
For $y\in P$, we let $\omega_\pi(y)=\omega|_{\pi^{-1}(y)}$ and $\mathcal{F}(Q,P)$ the \emph{fiber fan} of the projection $\pi_+:Q\to P$, i.e.~the normal fan of the fiber polytope $\Sigma(Q,P)$. Then by \cite{billera1992fiber} each maximal cone $\sigma$ of $\mathcal{F}(Q,P)$ corresponds to the finest subdivision of $P$.  
First, note that the rays of $\mathcal{F}(Q,P)$ are numerated by facets of $Q$, or by rays of the dual fan $\mathcal{F}_Q$ of $Q$. For any face $S$ of $Q$, we define its \emph{shadow} $\mathfrak{S}_S$ to be the convex hall of the rays of $\mathcal{F}(Q,P)$ which correspond to the rays of the normal cone $\sigma_S$ of $S$.

\medskip

Following the notation above, we generalize Theorem~\ref{thm:explsymp} to general $\pi$-regular subdivisions of $P$.
\begin{Proposition}\label{prop:Qsub}
Let $\sigma$ be a maximal cone of $\mathcal{F}(Q,P)$ and $R_\sigma(P)$ the corresponding subdivision of $P$. The collection of $m$-dim faces of $Q$ which appear in $R_\sigma(P)$ is given by 
\[
\mathcal{C}_\sigma = \left\{ S_i \,|\, \dim (S_i) = m \text{ and } \sigma\subset \mathfrak{S}_{S_i}\right\}.
\]
\end{Proposition}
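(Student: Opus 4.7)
The plan is to translate the proposition into a statement about the section $s_\psi$ from Theorem~\ref{triang}. By that theorem, for any generic $\psi$ in the interior of the maximal cone $\sigma \subset \mathcal{F}(Q,P)$, the subdivision $R_\sigma(P)$ coincides with $R_\psi(P)=\{P_x^\psi : x \in P\}$. An $m$-dim face $S$ of $Q$ with $\dim \pi_+(S)=m$ belongs to $R_\sigma(P)$ precisely when $s_\psi(x)\in \mathrm{int}(S)$ for some (equivalently every) $x$ in the relative interior of $\pi_+(S)$. The problem is thus reduced to recognising this selection condition as $\sigma\subset \mathfrak{S}_S$.

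Fix such an $x$, and set $p_S := P_x\cap S$. Since $Q$ is simple in dimension $m$ and $\pi$ is generic, $p_S$ is a single point and a vertex of the fiber $P_x = \pi^{-1}(x)\cap Q$. The key computation is that of the normal cone of $P_x$ at $p_S$, viewed inside $(\ker\pi)^*$. Simplicity ensures that the $n-m$ facets of $Q$ containing $S$ cut $\pi^{-1}(x)$ in exactly the facets of $P_x$ passing through $p_S$, and genericity of $\pi$ ensures this intersection is transverse. Consequently, under the natural projection $i^*:(\R^n)^*\twoheadrightarrow (\ker\pi)^*$, the normal cone $\sigma_S=\mathrm{cone}(\rho_F : F\supset S\text{ a facet of }Q)$ maps onto the normal cone of $P_x$ at $p_S$, and by the construction of the shadow this image is precisely $\mathfrak{S}_S$.

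Regarding $\psi$ as an element of $(\ker\pi)^*$ via Remark~\ref{rem:sec}, the vertex $p_S$ is the argmax of $\psi|_{P_x}$ if and only if $\psi$ lies in this normal cone, i.e.\ $\psi \in \mathfrak{S}_S$. Hence $S\in \mathcal{C}_\sigma$ iff $\mathrm{int}(\sigma)\cap \mathrm{int}(\mathfrak{S}_S)\neq\varnothing$. Since by Proposition~\ref{fiberfan} $\mathcal{F}(Q,P)$ is the coarsest common refinement of the normal fans of all fibers $P_x$, the maximal cone $\sigma$ is contained in a unique full-dimensional cone of each such normal fan, so $\mathrm{int}(\sigma)\cap \mathrm{int}(\mathfrak{S}_S)\neq\varnothing$ is equivalent to $\sigma\subset \mathfrak{S}_S$, yielding the desired description of $\mathcal{C}_\sigma$.

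The main technical step will be the identification of the normal cone of $P_x$ at $p_S$ with $i^*(\sigma_S)$. This is where both hypotheses are genuinely used: simplicity of $Q$ in dimension $m$ guarantees the bijection between facets of $Q$ containing $S$ and facets of $P_x$ through $p_S$, while genericity of $\pi$ rules out degenerations such as a facet of $Q$ becoming tangent to $\pi^{-1}(x)$ or two different facets projecting onto the same boundary wall of $P_x$. Once this is in place, the rest is a routine translation between chambers of $\mathcal{F}(Q,P)$ and vertex selections in fibers, already implicit in the proof of Theorem~\ref{triang}.
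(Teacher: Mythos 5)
Your argument is correct and follows essentially the same route as the paper's proof: both hinge on identifying, via simplicity of $Q$ in dimension $m$ and genericity of $\pi$, the vertex of a fiber $P_x$ cut out by an $m$-dimensional face $S$, and its normal cone, with the shadow $\mathfrak{S}_S$, and then selecting the faces using the cone $\sigma$. The paper packages the selection directly through a section $s_\sigma$ (choosing in each fiber the vertex dual to the normal-fan cone containing $\sigma$) rather than through a generic functional $\psi\in\mathrm{int}(\sigma)$ as in Theorem~\ref{triang}, but this is only a difference of presentation, not of substance.
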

\begin{proof}
The proof is similar to the proof of Theorem~\ref{thm:explsymp}. For a point $y\in P$, let $Q_y = Q\cap \pi^{-1}(y)$ and $\sigma_y$ be the maximal cone of the normal fan of $Q_y$ which contains $\sigma$. 
We define a section $s_\sigma$ of $\pi$ as:
\[
s_\sigma: P\to Q, \quad y\mapsto A_{\sigma_y},
\]
where $A_{\sigma_y}$ is the vertex of $Q_y$ dual to $\sigma_y$. 
Since $\pi_+$ is generic, each vertex $A_{\sigma_y}$ of $Q_y$ comes as the intersection of $Q_y$ with some $m$-dim face $S_y$ of $Q$. Hence, $\sigma_y= \mathfrak{S}_{S_y}$. On the other hand, the image $s_\sigma(P)$ is a union of faces of $Q$ which provides a subdivision $\mathcal{C}_\sigma$ of $P$, which completes the proof.
\end{proof}

Proposition~\ref{prop:Qsub} shows that there is a residue procedure \emph{à la} Jeffrey-Kirwan on $\omega_\pi(y)$ which computes the canonical function of $P$. Indeed, we obtain the following generalization of Proposition~\ref{prop:jkpolytopes}. 
\begin{Corollary}\label{cor:alaJK}
Using notation above, $\Omega(P)(y)=  \sum_{S_i \in \mathcal{C}_\sigma} {\rm Res}_{q_{S_i}} \,\omega_\pi(y),$ where $q_{S_i}= \pi^{-1}(y)\cap \overline{S_i}$.
\end{Corollary}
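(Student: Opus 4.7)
The plan is to mirror the proof of Theorem~\ref{thm:resfiber} (and its specialisation in Proposition~\ref{prop:jkpolytopes}), with Proposition~\ref{prop:Qsub} playing the role of the triangulation statement. The key geometric input is already in place: by Proposition~\ref{prop:Qsub}, the maximal cone $\sigma$ of $\mathcal{F}(Q,P)$ corresponds to a $\pi$-regular subdivision $\mathcal{C}_\sigma=\{S_i\}$ of $P$, where each $S_i$ is an $m$-dimensional face of $Q$ whose image $\pi_+(S_i)$ is a full-dimensional polytope in $P$. The remaining work is purely differential-form bookkeeping.

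First, I would apply the compatibility between subdivisions and canonical forms of polytopes to write
\[
\mathbf{\Omega}(P)(y)=\sum_{S_i\in\mathcal{C}_\sigma} \mathbf{\Omega}(\pi_+(S_i))(y),
\]
and, since each $S_i$ is an $m$-dimensional face, identify $\mathbf{\Omega}(\pi_+(S_i))$ as the pushforward of $\mathbf{\Omega}(S_i)$ via the birational map $\pi|_{S_i}:S_i\dashrightarrow\pi_+(S_i)$. In turn, $\mathbf{\Omega}(S_i)=\mathrm{Res}_{S_i}\mathbf{\Omega}(Q)$, which is the standard residue characterisation of canonical forms of faces.

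Next, I would use the defining decomposition $\mathbf{\Omega}(Q)=\omega\wedge\pi^*(\mu_{\mathbb{P}^m})$ to pull the residue operation onto the fiber. Since $\pi^*(\mu_{\mathbb{P}^m})$ restricts to zero on any fiber $\pi^{-1}(y)$, the residue along $S_i$ factors as
\[
\mathrm{Res}_{S_i}\mathbf{\Omega}(Q)=(\mathrm{Res}_{S_i}\omega)\wedge\pi^*(\mu_{\mathbb{P}^m}).
\]
Restricting to a generic fiber $\pi^{-1}(y)$ transversal to $\overline{S_i}$ identifies $\mathrm{Res}_{S_i}\omega|_{\pi^{-1}(y)}$ with the residue of $\omega_\pi(y)$ at the isolated point $q_{S_i}=\pi^{-1}(y)\cap\overline{S_i}$ (this is where we use that $Q$ is simple in dimension $m$, so that $q_{S_i}$ is a transverse intersection of the $n-m$ facet hyperplanes cutting out $S_i$ inside the fiber, and hence a simple pole of $\omega_\pi(y)$). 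Dividing both sides of the identity for $\mathbf{\Omega}(P)(y)$ by $\mu_{\mathbb{P}^m}(y)$ then gives the desired formula for the canonical function.

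The main obstacle is the careful orientation/sign analysis: one has to check that the sign conventions chosen for $\mathrm{Res}_{q_{S_i}}\omega_\pi(y)$ (coming from an ordering of local coordinates on $\pi^{-1}(y)$ adapted to the facet functions of $S_i$) are consistent with those used in the sum $\sum_i\mathbf{\Omega}(\pi_+(S_i))$. As in Remark~\ref{rk:orientation2} and the proof of Proposition~\ref{prop:jkpolytopes}, this can be resolved by fixing the orientation on $P$ via $\mu_{\mathbb{P}^m}$ and observing that for any $y\in\pi_+(S_i)$ the point $q_{S_i}$ lies in the positive fiber, so the corresponding residue is positive; this forces the signs in the two expansions to agree term by term. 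Once the sign matching is verified, Proposition~\ref{prop:Qsub} together with the residue factorisation above yields the claim.
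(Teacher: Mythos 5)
Your proposal is correct and follows essentially the same route as the paper: the paper's proof likewise invokes Proposition~\ref{prop:Qsub} to get the subdivision $\mathcal{C}_\sigma$, writes $\mathbf{\Omega}(P)=\sum_{S_i\in\mathcal{C}_\sigma}\mathrm{Res}_{S_i}\mathbf{\Omega}(Q)$, and identifies each term with $\mathrm{Res}_{q_{S_i}}\omega_\pi(y)\cdot\mu_{\mathbb{P}^m}$ exactly as in Theorem~\ref{thm:resfiber}(ii). Your extra remarks on transversality (via simplicity of $Q$ in dimension $m$) and on the sign conventions only spell out details the paper delegates to that theorem and Remark~\ref{rk:orientation2}.
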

\begin{proof}
As in Theorem~\ref{thm:resfiber}(ii), since $\mathcal{C}_\sigma$ provides a subdivision of $P$  by Proposition~\ref{prop:Qsub}, we have:
\begin{eqnarray*}
\Omega(P)(y) \cdot \mu_{\mathbb{P}^m}={\bf \Omega}(P) = \sum_{S_i\in \mathcal{C}_\sigma} {\bf \Omega}(S_i) &=& \sum_{S_i\in \mathcal{C}_\sigma} \mbox{Res}_{S_i}\mathbf{\Omega}(Q)(y) \\ 
&=&\sum_{S_i\in \mathcal{C}_\sigma} \mbox{Res}_{q_{S_i}} \omega_\pi(y) \cdot \mu_{\mathbb{P}^m}.
\end{eqnarray*}
Hence, $\Omega(P)(y)=  \sum_{S_i \in \mathcal{C}_\sigma} {\rm Res}_{q_{S_i}} \,\omega_\pi(y),$ as desired.
\end{proof}
The sum of residues from Corollary~\ref{cor:alaJK} is a generalization of the classical JK-residue defined in \eqref{def:jkphysics}. More precisely, for a maximal cone $\sigma$ of $\mathcal{F}(Q,P)$ we sum up the residues of $\omega_\pi(y)$ which correspond to the shadows of normal cones of faces of $Q$ containing $\sigma$. 
In the classical procedure, i.e.~when $Q$ is a simplex $\Delta$, any collection of rays of $\mathcal{F}_\Delta$ corresponds to a face of $\Delta$, hence the above formula is a generalization of the classical JK-residue.

\subsection{Grassmann polytopes with linear fibers}
\label{subsec:Grassmann}
Instead of the whole amplituhedron $\mathcal{A}_{n,k,m}$, one can consider (the closure of) images $\pi_+(S_M)$ of positroid cells $S_M$ in $\Gr_+(k,n)$ under the map $\pi_+$ in Definition~\ref{def:amp}. Such images, interesting in their own, are part of the broader class of \emph{Grassmann polytopes} defined in \cite{lam2014totally}. Grassmann polytopes are positive geometries, and one could apply the framework above to study their fibers as proposed in Question~\ref{quest:general}.
Even though, Theorem~\ref{th:jkfibers} is formulated for the case of amplituhedra conjugate to polytopes, similar to \S\ref{subsec:cansubdiv}, one could extend it to the case of Grassmann polytopes with \emph{linear fibers} inside amplituhedra. These are Grassmann polytopes whose positive fibers are bounded by a configuration of hyperplanes; See Example~\ref{ex:linearfiber}. In this case, one has to consider \emph{à la} JK-residues, as described in \S\ref{subsec:cansubdiv}.

Let $S_M$ be a positroid cell of $\Gr_+(k,n)$ such that the fibers of the restriction map $\pi|_{S_M}$ are \emph{linear} and the fiber fans of all points $Y \in \pi_+(S_M)$ are combinatorially equivalent.
Then, the canonical function of $\pi_+(S_M)$ at a vector $\xi$ in the secondary fan of $\pi_+(S_M)$,
would be obtained using a residue \emph{à la} Jeffrey-Kirwan as follows:
\begin{equation*}
   \Omega(\pi_+(S_M))= 
   \mathrm{JK}_{\xi} \, \omega(\pi_+(S_M)).
\end{equation*} 
Here $\omega(\pi_+(S_M))$ is the volume form on the fiber $\pi|_{S_M}^{-1}(Y)$. We give an example below.

\begin{example}[Grassmann polytope inside $\mathcal{A}_{6,2,2}$ with linear fibers] \label{ex:linearfiber}
Consider the image $\pi_+(S)$ of the $6$-dim cell $S$ inside $\mathcal{A}_{6,2,2}$ that has the following vanishing Pl\"ucker coordinates: $p_{15}, p_{16}, p_{56}$. The volume form $\omega(\pi_+(S))$ on the fiber $\pi|_{S_M}^{-1}(Y)$ can be obtained by taking the residue of \eqref{ex:non-linear} around the $2$-dim hypersurface determined by $p_{15}(\lambda \cdot A)=p_{16}(\lambda \cdot A)= p_{56}(\lambda \cdot A)=0$ as:
\begin{equation*}
    \omega(\pi_+(S))=\frac{\mathcal{N}(Y,Z) \, \mbox{d}^2 \lambda}{p_{12}(\lambda \cdot A) \, p_{23}(\lambda \cdot A) \, p_{34}(\lambda \cdot A) \, p_{45}(\lambda \cdot A) \, p_{46}(\lambda \cdot A)},
\end{equation*}
where $\lambda$ are parametrized on the $2$-dim surface, and $\mathcal{N}$ is a numerator factor not depending on $\lambda$. In particular, all factors in the denominator are linear and they correspond to a configuration of hyperplanes $\lbrace \mathcal{H}_i \rbrace_{i \in [5]}$ in $\mathbb{P}^2$. The combinatorics of the corresponding fan does not vary by changing the points $Y \in \pi_+(S)$, moreover, it coincides with the secondary fan. More precisely, there are $5$ chambers (and triangulations) and $\binom{5}{2}=10$ cones (and generalized triangles). Moreover, $\mathrm{JK}_{\xi} \omega(\pi_+(S))=\Omega(\pi_+(S)),$ where $\xi$ is a fixed reference point in $\mathbb{P}^2$. For example, if we choose $\xi$ in the chamber contained in the cones $24,45,12$, then $\mathrm{JK}_{\xi} \omega(\pi_+(S))=\left( \mathrm{Res}_{C_{24}}+\mathrm{Res}_{C_{45}}+\mathrm{Res}_{C_{12}} \right) \omega(\pi_+(S)).$
\end{example}

\section{Conclusions and outlook}\label{sec:conclusions}
In this work, motivated by the works on fiber polytopes in combinatorics, and on positive geometries in mathematical physics, we introduced a new way to study canonical forms on amplituhedra via the fibers of the rational map $\pi: \Gr(k,n)\dashrightarrow \Gr(k,k+m)$ over points $Y$ in the amplituhedron $\mathcal{A}_{n,k,m}$.
We showed that the fibers of $\pi$ enjoy some properties similar to positive geometries. In particular, for each point $Y\in \mathcal{A}_{n,k,m}$, we associated a rational top-degree form $\omega_{n,k,m}(Y)$ to the fiber $\pi^{-1}(Y)$, that we call \emph{fiber volume form}, with simple poles along a certain collection of divisors.  
We showed that the value of the canonical function of $\mathcal{A}_{n,k,m}$ at $Y$ can be computed as a summation of certain residues of $\omega_{n,k,m}(Y)$. Moreover, we gave a concrete birational parametrization of the fibers of $\pi$ and explicitly wrote the fiber volume form $\omega_{n,k,m}(Y)$ in this parametrization.
As an application of our approach, we studied amplituhedra conjugate to polytopes, i.e.~$\mathcal{A}_{n,\ell,m}$ with $\ell=n-m-1$, and $m$ even. Importantly, the linear structure of the fibers of $\pi$ allows us to define a family of fan $\mathcal{F}(Y)$, for $Y \in \mathcal{A}_{n,\ell,m}$. 
We defined a collection of natural functions on $\mathcal{A}_{n,\ell,m}$ and conjectured their positivity; See Conjecture~\ref{con:main} for more details. Using this conjecture we showed that $\mathcal{F}(Y)$ is combinatorially equivalent to the secondary fan of cyclic polytope. 
We then showed that  
the fan $\mathcal{F}(Y)$ is the secondary fan of $\mathcal{A}_{n,\ell,m}$, i.e.~it gathers information about its (regular) subdivisions. This result generalizes the construction of the secondary polytopes of Gelfand–Kapranov–Zelevinsky to amplituhedra which are not necessarily polytopes.

\smallskip

To support Conjecture~\ref{con:main}, we have speculated that cluster algebras of the Grassmannian type could play a role in proving positivity of functions on amplituhedra which are not manifestly positive. From Scott's seminal work \cite{scott}, we know that all cluster variables are positive if all the Pl\"ucker coordinates on the Grassmannian are positive, i.e.~if we  restrict to the positive Grassmannian. Even though cluster variables have complicated expressions, they come from subtraction-free polynomials in Pl\"ucker coordinates. It would be fascinating to further explore the connection between the combinatorics of secondary fans of amplituhedra and Grassmannian cluster algebras.
We also proved that the \emph{Jeffrey-Kirwan residue} provides the correct residue prescription on the fiber volume form to compute the canonical function of amplituhedra with linear fibers, i.e.~cyclic polytopes $\mathcal{A}_{n,1,m}$ and conjugate to polytopes $\mathcal{A}_{n,\ell,m}$.
In particular, we showed that the collections of residues picked up by the Jeffrey-Kirwan procedure correspond to the sets of positroid cells in $\Gr_+(k,n)$ which provide (positroidal) triangulations of $\mathcal{A}_{n,k,m}$.

\smallskip

Looking ahead, we envision that our framework can be generalized to arbitrary positive geometries and could be applied in studying their triangulations and canonical forms. We hence 
suggest a \emph{fiber positive geometry} framework to find (regular) subdivisions of the positive geometries and compute their canonical forms.
As examples beyond amplituhedra, we have briefly presented the procedure to apply the framework of fiber positive geometries to \emph{fiber polytopes} and to \emph{Grassmann polytopes}, in connections to residues \emph{à la} Jeffrey-Kirwan. A comprehensive exploration of these cases is left to future works.

\smallskip

Questions~\ref{ques:res} and \ref{quest:general} are left open for general amplituhedra and fiber positive geometries. 
The hope to find answers to these questions for amplituhedra was ignited by the work of the third author in \cite{lukowski2020positive}, where it was conjectured that the $m=2$ amplituhedron $\mathcal{A}_{n,k,2}$ admits the positive tropical Grassmannian ${\rm Trop}^+\Gr(k+1,n)$ as secondary fan with respect to (regular) positroidal subdivisions.
In particular, even though $m=2$ amplituhedra are in general \emph{not} polytopes (and not even their fibers w.r.t. to the map $\pi$), their fiber positive geometries are polytopes! The construction of these polytopes, dual to the fan of ${\rm Trop}^+\Gr(k+1,n)$, was explained in the original work \cite{troppos}. How such fiber positive geometries emerge from our fiber-based approach is an interesting direction worth pursuing, and might shed light on possible generalizations beyond the $m=2$ case. Additionally, to handle non-polytopal fibers, it would be interesting to analyze possible connections with theories of \emph{convex bodies}, as in \cite{convexbody}.

\bibliographystyle{alpha}
\bibliography{JKresidue}

\bigskip
\noindent
\small {\bf Authors' addresses:}

\bigskip

\noindent Department of Mathematics: Algebra and Geometry, Ghent University, 9000 Gent, Belgium \\
Department of Mathematics and Statistics, 
UiT – The Arctic University of Norway, 9037 Troms\o, Norway
\\ E-mail address: {\tt fatemeh.mohammadi@ugent.be}

\medskip

\noindent Max Planck Institute for Mathematics in the Sciences, Leipzig, 04103, Germany\\
 E-mail address: {\tt leonid.monin@mis.mpg.de}

\medskip

\noindent Mathematical Institute, University of Oxford, Oxford, OX2 6GG, UK
\\ Department of Physics, Princeton University, NJ 08544, USA \\ E-mail address: {\tt matteo.parisi@maths.ox.ac.uk}
\medskip

\end{document}